\newtheorem{theorem}{Theorem}[section]
\newtheorem{Prop}[theorem]{Proposition}
\newtheorem{prop}[theorem]{Proposition}
\newtheorem{Lemma}[theorem]{Lemma}
\newtheorem{lemma}[theorem]{Lemma}
\newtheorem{corollary}[theorem]{Corollary}
\newtheorem{Conj}[theorem]{Conjecture}
\newtheorem*{problem}{Problem}
\newtheorem*{Quest*}{Question}
\newtheorem*{theorem*}{Theorem}
\newtheorem{Th}[theorem]{Theorem}
\newtheorem{Cor}[theorem]{Corollary}
\newtheorem{Question}{Question}
\newtheorem*{Question*}{Question}
\newtheorem*{Th*}{Theorem}
\theoremstyle{definition} 
\newtheorem{definition}[theorem]{Definition}
\newtheorem{Def}[theorem]{Definition}
\newtheorem{remark}[theorem]{Remark}
\newtheorem{Remark}[theorem]{Remark}
\newcommand{\ON}{\operatorname}
\newcommand{\mult}{\operatorname{mult}}
\newcommand{\gem}{\geqslant}
\newcommand{\lct}{\operatorname{lct}}
\newcommand{\mct}{\operatorname{mct}}
\newcommand{\Cox}{\operatorname{Cox}}
\newcommand{\GL}{\operatorname{GL}}
\newcommand{\Spec}{\operatorname{Spec}}
\newcommand{\Pic}{\operatorname{Pic}}
\newcommand{\SL}{\operatorname{SL}}
\newcommand{\Sym}{\operatorname{Sym}}
\newcommand{\Ch}{\operatorname{Ch}}
\newcommand{\GG}{\mathbb G}
\newcommand{\CC}{\mathbb C}
\newcommand{\QQ}{\mathbb Q}
\newcommand{\cQ}{\mathcal Q}
\newcommand{\HH}{\mathrm H}
\newcommand{\PP}{\mathbb P}
\newcommand{\cL}{\mathcal L}
\newcommand{\cO}{\mathcal O}
\newcommand{\CO}{\mathcal O}
\newcommand{\cI}{\mathcal I}
\DeclareMathOperator{\mon}{mon}
\DeclareMathOperator{\Hyp}{Hyp}
\DeclareMathOperator{\Int}{Int}
\DeclareMathOperator{\diag}{diag}
\DeclareMathOperator{\Aut}{Aut}
\DeclareMathOperator{\lcm}{lcm}
\DeclareMathOperator{\val}{\mathit{val}}
\DeclareMathOperator{\Models}{Models}
\newcommand{\MP}{\mathbb{P}}
\newcommand{\VV}{\mathbb{V}}
\newcommand{\ZZ}{\mathbb{Z}}
\newcommand{\CM}{\mathcal{M}}
\newcommand{\cA}{\mathcal{A}}
\newcommand{\fD}{\mathfrak D}
\newcommand{\co}{\colon}
\DeclareMathOperator{\spec}{Spec}
\DeclareMathOperator{\mspec}{\mathfrak{m}-Spec}
\DeclareMathOperator{\proj}{Proj}
\DeclareMathOperator{\Supp}{Supp}
\DeclareMathOperator{\pr}{pr}
\newcommand{\MQ}{\mathbb{Q}}
   \def\MR#1{}
\numberwithin{equation}{section}
\title{Stability of fibrations over one-dimensional bases}
\author{Hamid Abban, Maksym Fedorchuk, Igor Krylov}
\begin{document}

\address{\emph{Hamid Abban}
\newline
\textnormal{Department of Mathematical Sciences, Loughborough University, Loughborough LE11 3TU, UK
\newline
 \texttt{h.ahmadinezhad@lboro.ac.uk}}}

\address{ \emph{Maksym Fedorchuk}\newline \textnormal{Department of Mathematics, Boston College, 140 Commonwealth Ave, Chestnut Hill, MA 02467, USA
\newline
\texttt{maksym.fedorchuk@bc.edu}}}

\address{ \emph{Igor Krylov}\newline \textnormal{Korea Institute for Advanced Study, 85 Hoegiro, Dongdaemun-gu, Seoul 02455, Republic of Korea \newline
\texttt{IKrylov@kias.re.kr}}}

\begin{abstract}
We introduce and study a new notion of stability for varieties fibered over curves,
motivated by Koll\'ar's stability for homogeneous polynomials with integral coefficients \cite{kollar-polynomials}. 
We develop tools to study geometric properties of stable birational models
of fibrations whose fibers are complete intersections in 
weighted projective spaces. As an application, we prove the existence of standard models of threefold degree $1$  
del Pezzo fibrations, 
settling a conjecture of Corti \cite{corti-annals}. 
\end{abstract}

\maketitle

\setcounter{tocdepth}{1}
\tableofcontents

\section{Introduction} 

Finding {\it good models} of varieties fibered over one-dimensional schemes is a central problem in geometry and arithmetic, with some of the key 
examples being: 
Tate's minimal  
models of elliptic curves \cite{tate-algorithm}, 
N\'eron models of abelian varieties \cite{neron-book,neron}, semistable reduction for curves \cite{artin-winters}, Sarkisov's standard models of conic bundles \cite{Sarkisov}, Koll\'ar's theory of 
semistable hypersurfaces over PIDs \cite{kollar-polynomials}.

In this paper, we solve a concrete problem in birational geometry of threefolds (Corti's conjecture on the existence of standard models of degree $1$ del Pezzo fibrations) 
by developing a new theory of semistability for fibrations over a one-dimensional base,
which we call \emph{Koll\'ar semistability}. For the main application{\textemdash}Koll\'ar  semistability of weighted hypersurfaces (or their intersections) in weighted projective spaces{\textemdash}our theory is a common generalization of both Tate's and Koll\'ar's theories.  We give an overview of Corti's conjecture in \S\ref{S:corti} and of the general theory in \S\ref{S:stability}.

\subsection{Corti's conjecture on standard models of degree $1$ del Pezzo fibrations}\label{S:corti}
Finding \emph{standard} models for Mori fiber spaces (MFS) $f\colon X\to Z$ over a positive-dimensional base $Z$ is crucially useful for birational geometry, and is already an interesting problem when $X$ is a threefold. 
It could be thought of as a \emph{post-MMP} step to further simplifying birational models of a given uniruled variety $X$. Standard models behave nicely under predictions in terms of birational rigidity, 
see Section \ref{QandC} for further explanation and speculations.  

When $\dim X=3$, the two types of MFS are conic bundles and del Pezzo fibrations (of degree $d\in\{1,\dots, 9\}$). 
Sarkisov developed a satisfying theory of standard models of conic bundles over a surface \cite[Theorem~1.13]{Sarkisov}. Motivated by this, Corti proposed the following 
notion of a standard model for a threefold del Pezzo fibration:

\begin{Def}[{\cite[Definitions~1.8~and~1.13]{corti-annals}}] 
\label{D:standard-model} Suppose $Z$ is an (essentially) smooth irreducible one-dimensional scheme over an algebraically closed field $k$.
Let $K=k(Z)$ be the function field of $Z$
and $\pi\colon X \to Z$ be a flat projective morphism with the generic fiber a smooth del Pezzo surface $X_K$ over $K$ of degree $d:=K_{X_K}^2 \in \{1,\dots, 9\}$.
We say that $\pi\colon X \to Z$ is \emph{a standard model} of $X_{K}$ over $Z$, or \emph{a standard del Pezzo fibration of degree $d$}, if:
\begin{enumerate}
\item $X$ has terminal singularities.  
\item $\pi$ has integral fibers.
\item $-aK_{X}$ is a $\pi$-ample line bundle, where 
\[
a=\begin{cases} 1 & \text{if $d\geq 3$}, \\ 2 &\text{if $d=2$,} \\ 6 & \text{if $d=1$}.\end{cases}
\]
\end{enumerate}
\end{Def}

In this paper, we address the following:
\begin{problem}
Given a del Pezzo surface $X_K$ over $K$, can we find a standard model of $X_K$ over $Z$?
\end{problem}

\begin{remark}
A standard model $\pi\colon X\to Z$ is a 
Mori fiber space if and only if $\Pic(X_{K})=\ZZ$.
\end{remark}

\begin{remark}[Reduction to the local case]\label{local-to-global}
By descent, the problem of finding a standard model immediately reduces to the local case $Z=\spec \cO_{C,p}$, where $\cO_{C,p}$ is a local ring of a closed point on a smooth curve $C$ over $k$. 
Indeed, suppose $\pi\colon X\to C$ is any model of $X_K$ over a curve $C$.  Let $T\subset C$ be the finitely many points where the fiber of $\pi$ is singular.
The disjoint union of $C\setminus T$ and $\{\spec \cO_{C,p}\}_{p\in T}$ is an fpqc covering of $C$. Since fpqc descent is effective for projective schemes
endowed with a choice of a very ample line bundle (see e.g., \cite[Theorem 4.38]{vistoli}), in our case, the $a$-Gorenstein del Pezzo surfaces, we can glue a smooth family over $C\setminus T$
and standard models over spectra of the local rings $\{\spec \cO_{C,p}\}_{p\in T}$ into a global standard model over $C$.
\end{remark}

In the complex case, Corti established the existence of standard models for $d\geq 2$:
\begin{Th}[Corti {\cite[Theorems~1.10~and~1.15]{corti-annals}}] \label{Corti-Th} Suppose $d\geq 2$ and $k=\CC$. 
Let $X_K$ be a smooth del Pezzo surface of degree $d$ over $K=\CC(Z)$, the function field of a smooth complex curve $Z$.  Then there exists a standard model of $X_{K}$ over $Z$.
\end{Th}
In this paper, we prove Corti's conjecture \cite[Conjecture 1.16]{corti-annals}, establishing the existence of standard models in the remaining degree $1$ case: 
\begin{theorem}\label{corti-theorem} Suppose $Z$ is an (essentially) smooth irreducible one-dimensional scheme over an algebraically closed field $k$ with $\ON{char}(k)\neq 2,3$.
Let $K=k(Z)$ be the function field of $Z$. Suppose $X_K$ is a smooth del Pezzo surface of degree $1$ over $K$. Then there exists a standard model of $X_K$ over $Z$.
\end{theorem}
A more precise result will be given in Theorem \ref{blackbox} below. Our methods also extend Corti's Theorem \ref{Corti-Th} for $d=2$ to every  
algebraically closed field $k$ with $\ON{char}(k)\neq 2$ (see Theorem \ref{T:degree-2}).

\subsection{Koll\'ar stability: First examples} \label{S:stability}
Let $R$ be a DVR with a uniformizer $t$ and the fraction field $K=\ON{Frac}(R)$. Set $\Delta=\spec R$.  Suppose we are given a flat projective morphism $\pi\colon X \to \Delta$, where $X$ is normal and
$\pi_* \cO_X=\cO_{\Delta}$. 
We denote this by $X/\Delta$, and refer to it as a \emph{fibration}.
We say that $\pi^\prime\colon X^\prime \to \Delta$ is a (birational) \emph{model} of $X/\Delta$ if there is a birational map $\chi\colon X\dashrightarrow X^\prime$  
 such that  
$\pi^\prime\circ\chi = \pi$ and such that $\chi$ induces an isomorphism between the generic fibers of $X/\Delta$ and $X'/\Delta$:
\begin{equation*}
\begin{aligned}
\xymatrix
{ 
	X\ar@{-->}[rr]^\chi \ar[dr]^\pi &  & \ar[dl]_{\pi^\prime} X^\prime \\
	& \Delta &
}
\end{aligned}
\end{equation*}

A priori there are many models for a given fibration $X/\Delta$. Hence it is natural to ask: What is the {\it best model} for a given fibration $X/\Delta$? Or, equivalently, given a projective scheme $X_K$ over $K$,
what is its best model over $R$? It is crucial to note that we \emph{do not allow base change}, motivated by questions in birational geometry. 

We will answer these questions, in a great generality, by defining a notion of Koll\'ar stability of $X/\Delta$
in Section \ref{kollar}. Here is a brief overview of our theory: As to be expected, stability depends on several choices.  
Suppose that there exists a proper parameter space $M$ over $R$ 
such that the generic fiber of $X/\Delta$ is a $K$-valued
point of $M$. 
By properness of $M$, after possibly passing to a birational model, we can assume that $\pi \colon X\to \Delta$ is a pullback of the universal family over $M$ via a morphism
$f_{\pi}\colon \Delta \to M$.  Assume that $M$ is endowed with an additional structure given by an action of a group scheme $G$, a choice $\cL \in \Pic^G(M)$ of a $G$-linearized line bundle, and a choice of its 
$G$-invariant section $\fD \neq 0 \in \HH^0(M,\cL)^G$.  
Given all of this, we can make the following: 

\begin{Def} \label{Stab-global-def} 
We say that $\pi\colon X \to \Delta$ is a \emph{$\fD$-semistable} model of $X_K$ if $f_{\pi}(K) \notin \Supp(\fD)$ and the $t$-valuation of the Cartier divisor $f_\pi^*(\fD)$  on $\Delta$ is minimal among all 
maps $f\colon \Delta \to M$ with $ f_\pi(K)\in G(K)\cdot f(K)$.
\end{Def}

Before we delve into the general theory, we give two examples.
\subsubsection{Koll\'ar stability of cubics in $\PP^3$, after \cite{kollar-polynomials}} Suppose $X_K$ is a smooth degree $3$ del Pezzo surface over $K$. Then $X_K$ is isomorphic to a cubic hypersurface in $\PP^3_K$, 
given by some form $\tilde{F} \in \PP\HH^0(\PP^3_K, \cO(3))$. A cubic form $F\in \PP\HH^0(\PP^3_R, \cO(3))$ is called a \emph{semistable (cubic) model of $X_K$ over $R$} if $\tilde{F}\in \GL_4(K) \cdot F$ and for every 
weight system $\rho=(w_1,w_2,w_3,w_4)\in \ZZ^4$ and every choice of homogeneous coordinates $x,y,z,w$ on $\PP^3_R$, we have
\begin{equation}\label{E:cubic}
\mult_{\rho} (F) \leq \frac{3}{4}(w_1+w_2+w_3+w_4), \ \text{where}
\end{equation}
 \[
\mult_{\rho} (F):=\text{the minimum of the $t$-valuations of all the coefficients of $F(t^{w_1}x, t^{w_2}y, t^{w_3}z, t^{w_4}w)$.}
\]

This definition is obtained from Definition \ref{Stab-global-def} by taking $M=\PP\HH^0(\PP^3_R, \cO(3))$, $G=\GL_4=\Aut_{gr}(\Cox(\PP^3))$, and $\fD$ to be the discriminant divisor on $M$.
\begin{theorem}[Koll\'ar, cf. {\cite[Proposition 6.4.1]{kollar-polynomials}}] \label{T:cubic} \hfill \begin{enumerate}
\item A semistable cubic model  over $R$ exists for every smooth degree $3$ del Pezzo over $K$. 
\item Suppose $R=\cO_{C,p}$ is the local ring of a closed point of a smooth algebraic curve over an algebraically closed field $k$. 
Then a semistable cubic model is a standard del Pezzo fibration of degree $3$ over $\Delta=\spec R$. 
\end{enumerate}
\end{theorem}
A remarkable feature of this result is that semistability has a numerical criterion, given by \eqref{E:cubic}, phrased in terms of the equation $F$. Once the reader accepts the existence of a semistable model, its geometric properties,
such as terminality of the total space,  
can be derived from this numerical criterion.  

\subsubsection{T-semistable degree $1$ del Pezzo fibrations}
The novelty in our approach to Theorem \ref{corti-theorem} (Corti's conjecture) is to treat it as a stability problem, and not as a problem in the birational geometry of threefolds. 
Namely, we find a correct analogue Theorem \ref{T:cubic} for degree $1$ del Pezzos, which we now explain. 

A smooth degree $1$ del Pezzo surface $X_K$ over $K$ can be written as a sextic hypersurface in a weighted projective space $\PP_K(1,1,2,3)$.
Unfortunately, no notion of semistability for such sextics leads to standard models (see Remark \ref{why} for a technical explanation), so instead we write
$X_K$ as a complete intersection 
\begin{equation}\label{E:generic-dp1}
\tilde F(x,y,z,w,s)=\tilde H(x,y,z,w,s)=0,
\end{equation}
in $\PP_K(1_x,1_y,2_z,3_w,3_s)$, where $\deg \tilde F=6$ and $\deg \tilde H=3$. 
For example, we can write $X_K$ as a sextic in the variables $x,y,z,w$ in $\PP_K(1,1,2,3)$ and take $\tilde H=s$.
We call the resulting ideal $I=(\tilde{F}, \tilde{H})\subset K[x,y,z,w,s]$ a $(6,3)$-intersection over $K$.  

We say that an ideal $I=(F,H) \subset R[x,y,z,w,s]=\Cox(\PP_R(1,1,2,3,3))$ is a $(6,3)$-intersection over $R$ if $\deg F=6$, $\deg H=3$, and $H\notin (t)$ and $F\notin (H,t)$. 
By properness, every $(6,3)$-intersection $(\tilde F, \tilde H)$ over $K$ can be completed uniquely to a $(6,3)$-intersection
$I=(F,H)$ over $R$ with $(\tilde F, \tilde H)=(F,H)\otimes_{R} K$.  

Given a choice of quasihomogeneous coordinates 
$x, y, z, w, s$ --- namely a sequence of degree $1,1,2,3,3$ elements of $\Cox(\PP_R(1,1,2,3,3))$ that generate this graded $R$-algebra, 
a weight system $\rho=(w_1,w_2,w_3,w_4,w_5)\in \ZZ^5$, and an element $A\in R[x,y,z,w,s]$, we define 
$\mult_{\rho}(A)$ to be the minimum of the valuations of all the coefficients of $A(t^{w_1}x, t^{w_2}y, t^{w_3}z, t^{w_4}w, t^{w_5}s)\in K[x,y,z,w,s]$.

 We can now make the following:
\begin{definition}\label{T-semi} Fix $0<\epsilon \ll 1$.  A $(6,3)$-intersection $I=(F,H)$ over $R$ is called a \emph{T-semistable} $(6,3)$-intersection model of $X_K$ over $R$ if:
\begin{enumerate}
\item The ideal $(F,H)\otimes_{R} K$ is a $(6,3)$-intersection over $K$ defining a smooth del Pezzo surface in $\PP_K(1,1,2,3,3)$ isomorphic to $X_K$ over $K$.
\item For every choice of generators $I=(F,H)$, for every choice of quasihomogeneous coordinates in $R[x,y,z,w,s]$, and for every weight system $\rho=(w_1,w_2, w_3, w_4, w_5)$, we have:
\begin{equation}\label{num-ss}
\mult_\rho(F)+ \frac{6}{7} \left( \mult_{\rho}(H) - \sum_{i=1}^{5} w_{i} \right)+\epsilon (2 \mult_{\rho}(H) +3w_3 -2 (w_4 + w_5)) \leq 0.
\end{equation}
\end{enumerate}
If for some system of coordinates and a weight system $\rho$, the inequality \eqref{num-ss} is satisfied (resp., violated), we
will say that $(F,H)$ is \emph{$\rho$-semistable} (resp., \emph{$\rho$-unstable}.)
\end{definition}


The following result establishes Theorem \ref{corti-theorem} (Corti's conjecture):
\begin{theorem}[{$\Rightarrow$ Theorem \ref{corti-theorem}}]\label{blackbox} \hfill
\begin{enumerate}
\item Every smooth degree $1$ del Pezzo $X_K$ over $K$ has a T-semistable $(6,3)$-intersection model over $R$.
\item Suppose $R=\cO_{C,p}$ is the local ring of a closed point of a smooth algebraic curve over an algebraically closed field $k$ with $\ON{char}(k)\neq 2, 3$. Suppose
$I=(F,H)$ is a T-semistable $(6,3)$-intersection model. Let $X:=\{F=H=0\}\subset \PP_R(1,1,2,3,3)$. Then $X\to \Delta$ is a standard del Pezzo fibration of degree $1$ over $\Delta$. 
\end{enumerate}
\end{theorem}

Part (1) of this theorem uses the machinery of Koll\'ar semistability (see Definition \ref{Stab-global-def}) 
and necessitates a careful choice of a parameter space $M$ with a group action and of an invariant Cartier  divisor $\fD$ that governs stability. Specifically, it follows immediately 
from the following result that we establish in \S\ref{dP1-stab}:
\begin{Th}[={Proposition \ref{P:T-semistability}}] 
Let $M$ be the parameter space of intersections of weighted hypersurfaces of degree $6$ and $3$ in $\MP(1,1,2,3,3)$.  Let $G=\Aut_{gr}(\PP(1,1,2,3,3))$. 
Then there exists a $G$-invariant divisor $\fD^{ter}$ on $M$, supported on the locus of singular intersections, such that every $\fD^{ter}$-semistable model in $M$ is a T-semistable $(6,3)$-intersection.
\end{Th}
The reader who wishes to bypass entirely Sections \ref{kollar}, \ref{numeric} and \ref{examples}, can take Part (1) of Theorem \ref{blackbox} as a blackbox. 
Part (2) of Theorem \ref{blackbox} is proved as Theorem \ref{mainthm2} in Section \ref{Sing-and-stab},
where we use the numerical criterion of Definition \ref{T-semi} to verify that a T-semistable model is a standard del Pezzo fibration.

\subsection*{Acknowledgements} We would like to thank Brendan Hassett who suggested this collaboration. We would like to thank Ivan Cheltsov and Takuzo Okada for fruitful conversations. We benefitted from various research visits, which were made possible with support from a Heilbronn Institute's Focused Research Grant and the Loughborough University Institute of Advanced Studies. Present work took place while the third author 
 was a Postdoc employed in the ERC Advanced grant n. 340258, `TADMICAMT' and later in Korea Institute for Advanced Study supported by KIAS Individual Grant n. MG069801.
The second author was partially supported by the NSA Young Investigator grant H98230-16-1-0061 and the Simons Collaboration Grant for Mathematicians 582030.
The first author is partially supported by EPSRC grant EP/T015896/1. The authors would like to thank the referees whose questions and suggestions led to a far more general theory in the final version.

\section{Koll\'ar stability}\label{kollar}

In this section, we introduce \emph{Koll\'ar stability}, a new notion of stability for PID-valued points of proper schemes admitting a group action.
It generalizes Koll\'ar's notion of 
stability for families of hypersurfaces over PIDs as developed in \cite{kollar-polynomials} and described in Section \ref{S:GL}.
A more technical 
name of $\fD$-semistability will be introduced and explained below.
 
We work over a fixed principal ideal domain $R$ with the fraction field $K$. We set $\Delta=\Spec(R)$.
For a maximal ideal $(t) \in \mspec{R}$, we denote by $\val_t\colon K \to \ZZ\cup \infty$ the corresponding $t$-valuation of $K$. 
Unless stated otherwise, all schemes and morphisms are over $R$.
  
For most applications, $R$ will be a DVR and, in our principal application---the proof of Corti's conjecture on degree $1$ del Pezzo fibrations, $R=\cO_{C,p}$, the local ring of a closed
point of a smooth curve $C$ over an algebraically closed field $k$.


\subsection{$\fD$-semistability} \label{stability-setup}
\subsubsection{} Suppose that $M$ is a proper scheme with an action of a group scheme $G$, that $\cL\in \Pic^{G}(M)$ is a $G$-linearized line bundle on $M$ and $\fD\in \HH^0(M, \cL)^{G}$
is a nonzero $G$-invariant section of $\cL$.

%
The valuative criterion of properness gives canonical identifications $R$-valued and $K$-valued points of $M$:
\begin{equation}\label{pt-iden}
M(R)=M(K)=M(R_{(t)}) \quad \text{for every $(t)\in \mspec(R)$},
\end{equation}
and defines an action of the group $G(K)$ of $K$-points of $G$ on $M(R)$ as follows. 

\subsubsection{$G(K$)-action on $R$-valued points}\label{G(K)-action} Recall that $\Delta=\spec(R)$. Suppose $f\colon \Delta \to M$ is an $R$-point of $M$,
and $\rho\in G(K)$, then we define $\rho\cdot f\in M(R)$ to be the unique extension to $R$ of the $K$-point $\rho\cdot f(\spec K) \in M(K)$ given by \eqref{pt-iden}.

We call $\rho\cdot f$ \emph{a model of $f$}. The set of all models of $f$ is then simply the $G(K)$-orbit of $f$ in $M(R)$:
\[
\Models(f)=\{\rho\cdot f \mid \ \rho\in G(K)\} \subset M(R).
\]

We are now in position to formulate our definition of stability:
\begin{definition}[$\fD$-(semi)stability]\label{D:stability} An $R$-point $f\colon \Delta \to M$ is \emph{$\fD$-semistable} if
\begin{enumerate}
\item\label{ss1} $f(\spec K) \notin \Supp(\fD)$ (equivalently, $f^*(\fD)\neq 0$ as a Cartier divisor on $\Delta$).
\item\label{ss2} \begin{equation}\label{D-minimizer} \deg f^*\fD:=\sum_{(t)\in \mspec(R)} \val_t(f^*\fD)\leq \sum_{(t)\in \mspec(R)} \val_t((f')^*\fD)=\deg (f')^*\fD\end{equation} for every model $f'$ of $f$.
\end{enumerate}
We say that $f\colon \Delta \to M$ is \emph{$\fD$-stable} if it is $\fD$-semistable and in addition every model $f'\colon \Delta\to M$ for which the equality holds in \eqref{D-minimizer}
satisfies $f'(\Delta)\in G(R)\cdot f(\Delta)$.
\end{definition}
In view of \eqref{D-minimizer}, we will also call a $\fD$-semistable model, a $\fD$-minimizer. Stability then means that the $\fD$-minimizer is unique, up to an action of $G(R)$. 
If $\rho\in G(K)$ is such that $$\sum_{(t)\in \mspec(R)} \val_t(f^*\fD)> \sum_{(t)\in \mspec(R)} \val_t((\rho\cdot f)^*\fD),$$ then we say that $\rho$ destabilizes $f$,
and that $f$ is unstable with respect to $\rho$, or simply $\rho$-unstable. 

For the applications we have in mind, the distinction between semistability and stability will play no role and so we rarely invoke the concept of $\fD$-stability. 

When $M$ is a parameter space representing a functor of flat families of schemes, such as for example the Hilbert scheme of a fixed projective space, 
the $R$-points of $M$ are simply fibrations over $\Delta$ fibered in the objects parameterized by $M$. Our applications will be essentially of this form. 

\subsubsection{Reduction to DVRs} 
\begin{lemma}\label{L:localization}
Suppose a group scheme $G$ satisfies the following condition: 

$(\dagger)$ For every $\rho\in G(K)$, there exist finally many irreducible elements $\{t_i\}_{i=1}^{n}\in R$ and elements $\rho_i\in G(R[1/t_i])$ such that 
\begin{equation*}
\rho=\rho_1\cdots \rho_n.
\end{equation*}
Then an $R$-point $f\in M(R)$ is $\fD$-semistable if and only if the induced point $f\in M(R_{(t)})$ is $\fD$-semistable for every $(t)\in \mspec(R)$.
\end{lemma}
\begin{proof}
If $\rho_i\in G(R[1/t_i])$, then $\rho=\rho_1\cdots \rho_n\in G(K)$ destabilizes $f\in M(R)$ if and only if some $\rho_i$ destabilizes $f\in M(R_{(t_i)})$.
\end{proof}

Condition $(\dagger)$ of Lemma \ref{L:localization} is satisfied for $\GL_n$ and $\SL_n$ by the elementary divisor theorem, and for $G=\Aut_{gr}(\Cox(\PP(c_1,\dots,c_n)))$,
the group of graded automorphisms of the Cox ring of a weighted projective space by a similar argument. These are the only groups considered in our applications.  

\subsubsection{Stack-theoretic interpretation}  The quadruple $(G, M, \cL, \fD)$ defines quotient stack $\mathfrak{M}:=[M/G]$, a 
line bundle $\cL$ on $\mathfrak{M}$, and a global section $\fD$ of $\cL$.  We then say that an $R$-point $f\colon \Delta \to \mathfrak{M}$ is $\fD$-semistable 
if $f^*(\fD) \neq 0$ and $\val_t(f^*\fD) \geq \val_t((f^*)' \fD)$ for every other $R$-point $f'$ of $\mathfrak{M}$ such that $f(\spec K)$ and $f'(\spec K)$ 
are isomorphic $K$-points of $\mathfrak{M}$. There is however a subtle distinction between the resulting $\fD$-stability on the quotient stack $\mathfrak{M}$ and on the original 
space $M$ arising from the possibility that $R$-points of $M$ do not necessarily surject onto the $R$-points of $\mathfrak{M}$. Working over a field $k$,  if $R$ is a complete ring, or $G$ is a special group
in the sense of Serre \cite{serre-special} (see also \cite{special-groups} for a modern exposition)
every $R$-point of $\mathfrak{M}$ comes from an $R$-point of $M$: In these cases every \'etale $G$-torsor over $\Delta$ is a Zariski $G$-torsor,
and so the two notions of $\fD$-semistability of $R$-points of $M$ and $\mathfrak{M}$ are equivalent.

\subsection{Numerical criterion for $\fD$-semistability}
With Lemma \ref{L:localization} in mind, we now assume that $R$ is a DVR. 
\subsubsection{Hilbert-Mumford-Koll\'ar index}\label{HMK} Suppose $f\in M(R)$ and $\rho\in G(K)$. Let
$f'=\rho\cdot f\in M(R)$ as defined in \eqref{G(K)-action}.
Fix isomorphisms 
\begin{align}\label{E:i-isoms}
i\co f^*(\cL) &\simeq \cO_{\Delta}=\widetilde{R}, \\
i'\co (f')^*(\cL) &\simeq \cO_{\Delta}=\widetilde{R},
\end{align}
where $\widetilde{R}$ is simply the coherent sheaf given by the free rank one $R$-module on $\spec R$. (These isomorphisms are defined up to a unit in $R$.)

The $G$-linearization of $\cL$ induces a $K$-linear isomorphism 
\[
\rho^{-1}\colon \cL_{f'(\spec K)} \rightarrow \cL_{f(\spec K)}
\] and so, by composition, we obtain an isomorphism
\begin{equation}\label{E:isom-HMK}
K=\cO_{\spec K} \simeq \cL_{f'(\spec K)} \rightarrow \cL_{f(\spec K)} \simeq \cO_{\spec K}=K.
\end{equation}
By construction, the isomorphism in \eqref{E:isom-HMK} is given by a canonical element in $K/R^*$. We denote the valuation of this element 
by $\mu_{\rho}^{\cL}(f)$.  In plain terms, if $e_{f'}$ is the generator of $ \cL_{f'(\spec K)}$ that extends to a generator of $\cL\vert_{f'(\Delta)}$, then \[\rho^{-1}(e_{f'})=t^{\mu_{\rho}^{\cL}(f)}(e_f),\]
where $e_{f}$ is a generator of $ \cL_{f(\spec K)}$ that extends to a generator of $\cL\vert_{f(\Delta)}$.  We call $\mu_{\rho}^{\cL}(f)$ 
the \emph{Hilbert-Mumford-Koll\'ar index of $f$ with respect
to $\rho$} (abbreviated as HMK-index). Our naming convention will be explained below. 

It is immediate from the above definition that
for every $G$-invariant regular section $\fD$ of $\cL$, we have 
\[
(f^*\fD)=(t^{\mu^{\cL}_{\rho}(f)}(f')^*\fD),
\]
and so 
\begin{equation}\label{E:valulations}
\val_t(f^*\fD))=\val_t((f')^*\fD)+\mu^{\cL}_{\rho}(f).
\end{equation}

It follows that
\begin{prop}
A map $f\colon \Delta \to M$ 
is $\fD$-semistable if and only if $\mu^{\cL}_{\rho}(f)\leq 0$ for all $\rho\in G(K)$.
\end{prop}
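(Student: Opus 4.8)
The plan is to read the equivalence off directly from the identity \eqref{E:valulations}, of which the proposition is essentially a repackaging, together with the description of $\Models(f)$. There is no analytically hard step here; the content has already been extracted in deriving \eqref{E:valulations}, and what remains is bookkeeping plus one genuine subtlety concerning the non-degeneracy condition.

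First I would record the dictionary between the two quantifiers in the statement. By the definition of $\Models(f)$, every model $f'$ of $f$ is of the form $f'=\rho\cdot f$ for some $M$-adapted $\rho\in G(K)$, and conversely each such $\rho$ produces a model; so quantifying over models $f'$ in condition \eqref{ss2} of Definition \ref{D:stability} is the same as quantifying over $M$-adapted $\rho$. I would also note that the identity $e\in G(K)$ is $M$-adapted, with $e\cdot f=f$ and $\mu^{\cL}_{e}(f)=0$ by \eqref{E:valulations}; thus the inequality $\mu^{\cL}_{\rho}(f)\le 0$ is always attained, with equality, at $\rho=e$, which is why it can never by itself force condition \eqref{ss1}.

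Both implications would then follow immediately. For the forward direction, assuming $f$ is $\fD$-semistable, I fix any $M$-adapted $\rho$ and set $f'=\rho\cdot f$; condition \eqref{ss2} gives $\val_t(f^*\fD)\le\val_t((f')^*\fD)$, and \eqref{E:valulations}, rewritten as $\mu^{\cL}_{\rho}(f)=\val_t(f^*\fD)-\val_t((f')^*\fD)$, forces $\mu^{\cL}_{\rho}(f)\le 0$. For the converse, assuming $\mu^{\cL}_{\rho}(f)\le 0$ for all $M$-adapted $\rho$, the same identity rearranged as $\val_t((f')^*\fD)=\val_t(f^*\fD)-\mu^{\cL}_{\rho}(f)\ge\val_t(f^*\fD)$ recovers \eqref{ss2} for an arbitrary model $f'=\rho\cdot f$.

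The only point requiring care, and the step I expect to be the genuine if minor obstacle, is the non-degeneracy condition \eqref{ss1}, i.e. $f^*\fD\ne 0$. This is exactly what guarantees that the valuations $\val_t(f^*\fD)$ and $\val_t((f')^*\fD)$ are finite, so that \eqref{E:valulations} is a true numerical identity rather than a vacuous $\infty=\infty$: if $f^*\fD=0$, then $G$-invariance of $\fD$ forces $(f')^*\fD=0$ for every model, the identity degenerates, and the HMK-index $\mu^{\cL}_{\rho}(f)$, which is built only from the $G$-linearization of $\cL$ via \eqref{E:isom-HMK} and does not see $\fD$, carries no information about semistability. I would therefore treat \eqref{ss1} as a standing hypothesis, consistent with the running assumption that the generic fiber lies in $M\setminus\Supp(\fD)$, rather than as something extractable from the sign of the HMK-indices; under this hypothesis the equivalence of \eqref{ss2} with the numerical criterion is precisely the two-line argument above.
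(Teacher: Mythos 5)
Your proposal is correct and matches the paper exactly: the paper offers no separate argument, simply stating the proposition as an immediate consequence of \eqref{E:valulations}, and your two-line derivation (together with the identification of models with $M$-adapted elements of $G(K)$) is precisely the intended reading. Your observation that condition \eqref{ss1} must be carried as a standing hypothesis, since the HMK-index is blind to $\fD$, is a correct and worthwhile clarification that the paper leaves implicit.
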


We record for the future use some functorial properties of the HMK-index that follow immediately from the definition:
\begin{prop}\label{P:HMK-functorial} Suppose $f\colon \Delta \to M$ is an $R$-point and 
$\rho, \phi \in G(K)$.
The following holds:
\begin{enumerate}
\item If $\cL_1$ and $\cL_2$ are two $G$-linearized line bundles on $M$, then 
\begin{equation}
\mu_{\rho}^{\cL_1+\cL_2}(f)=\mu_{\rho}^{\cL_1}(f)+\mu_{\rho}^{\cL_2}(f).
\end{equation}
\item If $h\colon M \to N$ is a $G$-morphism and $\cL$ is a $G$-linearized line bundle on $N$, 
then 
\begin{equation*}
\mu_{\rho}^{f^*\cL}(f)=\mu_{\rho}^{\cL}(h\circ f).
\end{equation*}
\item $\mu_{\phi\circ \rho}^{\cL}(f)= \mu_{\phi}^{\cL}(\rho\cdot f)+\mu_{\rho}^{\cL}(f)$.
\item If $\rho\in G(R)$, then $\mu_{\rho}^{\cL}(f)=0$.
\end{enumerate}
\end{prop}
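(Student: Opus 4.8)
The plan is to unwind the definition of the HMK-index and to reduce each of the four assertions to a transparent manipulation of the canonical class in $K^{*}/R^{*}$ that it records. Recall that, given $\rho$ that is $M$-adapted to $f$, with $f'=\rho\cdot f$ and trivializations $i\colon f^{*}\cL\simeq\widetilde{R}$ and $i'\colon (f')^{*}\cL\simeq\widetilde{R}$, the index $\mu^{\cL}_{\rho}(f)$ is the $t$-valuation of the scalar $a\in K^{*}$ by which the composite
\[
K \xrightarrow{\ (i')^{-1}\ } \cL_{f'(\spec K)} \xrightarrow{\ \rho^{-1}\ } \cL_{f(\spec K)} \xrightarrow{\ i\ } K
\]
from \eqref{E:isom-HMK} acts. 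First I would record the basic well-definedness fact that replacing $i$ or $i'$ by another trivialization multiplies $a$ by a unit of $R^{*}$ (an automorphism of a line bundle on $\spec R$ being multiplication by an element of $R^{*}$), so that $\val_t(a)$ is independent of these choices. This is precisely what allows trivializations to be inserted and cancelled freely below.

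For (1), the $G$-linearization of $\cL_1\otimes\cL_2$ is by definition the tensor product of the linearizations of $\cL_1$ and $\cL_2$, so $\rho^{-1}$ on the fiber of $\cL_1\otimes\cL_2$ is the tensor product of the two fibrewise maps; choosing the trivialization of $f^{*}(\cL_1\otimes\cL_2)$ as the tensor of the two chosen trivializations, the scalar for $\cL_1\otimes\cL_2$ is the product of those for $\cL_1$ and $\cL_2$, and additivity of $\val_t$ gives the claim. For (2), since $h$ is a $G$-morphism we have $(h\circ f)^{*}\cL=f^{*}(h^{*}\cL)$, the fibers $\cL_{(h\circ f)(\spec K)}$ and $(h^{*}\cL)_{f(\spec K)}$ are identified, and the linearization isomorphism $\rho^{-1}$ for $h^{*}\cL$ on $M$ is exactly the one for $\cL$ on $N$; hence the defining composites literally agree, and so do their valuations. (Here I read the left-hand side as $\mu^{h^{*}\cL}_{\rho}(f)$.) For (4), if $\rho\in G(R)$ then the action morphism of $\rho$ on $M$ and the linearization isomorphism are defined over $R$, not merely over $K$; consequently $\rho^{-1}$ is the restriction to the generic fiber of an isomorphism $(f')^{*}\cL\xrightarrow{\sim}f^{*}\cL$ of free rank-one $R$-modules, which under the $R$-trivializations $i,i'$ is multiplication by a unit of $R^{*}$, of $t$-valuation $0$.

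The part requiring genuine care is (3), where the essential input is the cocycle identity for the linearization. Writing $f'=\rho\cdot f$ and $f''=\phi\cdot f'=(\phi\circ\rho)\cdot f$, and fixing trivializations $i,i',i''$ of $f^{*}\cL,(f')^{*}\cL,(f'')^{*}\cL$, I would use that the linearization is a group action to factor the fiberwise map as $(\phi\circ\rho)^{-1}=\rho^{-1}\circ\phi^{-1}$, that is
\[
\cL_{f''(\spec K)} \xrightarrow{\ \phi^{-1}\ } \cL_{f'(\spec K)} \xrightarrow{\ \rho^{-1}\ } \cL_{f(\spec K)}.
\]
Computing $\mu^{\cL}_{\phi\circ\rho}(f)$ from $i''$ and $i$, I would insert the identity $(i')^{-1}\circ i'=\mathrm{id}$ at the intermediate fiber $\cL_{f'(\spec K)}$: the left factor is then exactly the composite defining $\mu^{\cL}_{\phi}(f')=\mu^{\cL}_{\phi}(\rho\cdot f)$ (using $i'',i'$), and the right factor is exactly the composite defining $\mu^{\cL}_{\rho}(f)$ (using $i',i$). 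Thus the scalar for $\phi\circ\rho$ is the product of the two scalars, and additivity of $\val_t$ yields the asserted identity. The only thing to be careful about is that the intermediate trivialization $i'$ enters the two factors with opposite orientations and therefore cancels, which is guaranteed by the well-definedness observation of the first paragraph; this bookkeeping is the sole obstacle, the remaining parts being immediate once the definition is unpacked.
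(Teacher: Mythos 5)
Your proof is correct and follows the only natural route: the paper itself states Proposition \ref{P:HMK-functorial} without proof, treating all four parts as immediate consequences of the definition of the HMK-index as the valuation of the canonical scalar in $K^*/R^*$ attached to the composite \eqref{E:isom-HMK}. Your careful bookkeeping (well-definedness under change of trivialization, the cocycle factorization $(\phi\circ\rho)^{-1}=\rho^{-1}\circ\phi^{-1}$ with the intermediate trivialization $i'$ cancelling, and the correct reading of the typo $f^*\cL$ as $h^*\cL$ in part (2)) is exactly the intended argument made explicit.
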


\subsubsection{One-parameter subgroups}\label{1-PS}
Fix a morphism $\iota\colon \spec(K) \to \GG_{m,R}=\spec R[z,z^{-1}]$ given by $z\mapsto t$. 
Then for every one-parameter subgroup 
$\rho\colon \GG_{m,R}\to G$, the morphism $\iota$ defines a corresponding 
$K$-point of $G$ given by $\rho \circ \iota \colon \spec K \to G$.  Following Mumford,
we denote the resulting $K$-point of $G$ by $\langle \rho \rangle$. We also write
$\mu^{\cL}_{\rho}$ instead of $\mu^{\cL}_{\langle \rho\rangle}$.

For every one-parameter subgroup $\rho\colon \GG_{m,R}\to G$ and every $\rho$-fixed $K$-point $x\in M(K)$,
each of the line bundles $\cL\in \Pic^{G}(M)$ defines a one-dimensional representation $\cL_{x}$ of $\GG_{m,K}$. Then the integer obtained by pairing of this character with $\rho$ is 
precisely $\mu^{\cL}_{\rho}(x)$. Plainly, 
$\rho$ acts on the fiber $\cL_{x}$ by 
\begin{equation}\label{E:character}
z\cdot w=t^{\mu^{\cL}_{\rho}(x)}w,  \quad \text{for $w\in \cL_{x}$}.
\end{equation}

\subsubsection{Cartan-Iwahori-Matsumoto decomposition}
Recall that for $G=\GL(n)$ and $G=\SL(n)$, the elementary divisors theorem 
says that the double coset of every element of $G(K)$ with respect to the subgroup $G(R)$
contains an element of the form $\langle \rho\rangle$ for some one-parameter subgroup
$\rho$ of $G$. (If the DVR $R$ is complete with an algebraically closed residue field, this is true more generally for reductive groups by a theorem of Iwahori \cite[p.52]{GIT}).
In this case, we say that $G$ has Cartan-Iwahori-Matsumoto decomposition, see \cite{cartaniwahorimatsumoto}.

Combining properties (3) and (4) of Proposition \ref{P:HMK-functorial}, we have: 
\begin{prop}\label{reductive-NC} Suppose $G$ has Cartan-Iwahori-Matsumoto decomposition.
Then a map $f$ is semistable if and only if $\mu^{\cL}_{\rho}(f)\leq 0$ for all one-parameter
subgroups $\rho\colon \GG_{m,R}\to G$. 
\end{prop}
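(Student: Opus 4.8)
The plan is to deduce Proposition~\ref{reductive-NC} from the preceding Proposition (the numerical criterion stating that $f$ is $\fD$-semistable if and only if $\mu^{\cL}_{\rho}(f)\le 0$ for all $\rho\in G(K)$ that are $M$-adapted to $f$) together with the functorial properties collected in Proposition~\ref{P:HMK-functorial}. The point is that the general numerical criterion quantifies over \emph{all} $M$-adapted elements $\rho\in G(K)$, whereas here we wish to restrict the quantifier to the much smaller set of one-parameter subgroups $\langle\rho\rangle$. Thus the whole content is to show that testing against one-parameter subgroups suffices when $G$ satisfies Iwahori's theorem. One direction is trivial: if $f$ is semistable, then by the numerical criterion $\mu^{\cL}_{\rho}(f)\le 0$ for \emph{every} $M$-adapted $\rho\in G(K)$, and in particular for every $M$-adapted one-parameter subgroup. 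So the real work is the converse.

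For the converse, suppose $\mu^{\cL}_{\rho}(f)\le 0$ for all $M$-adapted one-parameter subgroups $\rho$, and let $\sigma\in G(K)$ be an arbitrary $M$-adapted element; I want $\mu^{\cL}_{\sigma}(f)\le 0$. First I would invoke Iwahori's theorem to write the double coset of $\sigma$ with respect to $G(R)$ in the form
\[
\sigma = g_1 \langle\rho\rangle g_2, \qquad g_1, g_2\in G(R),
\]
for some one-parameter subgroup $\rho$ of $G$. Now I decompose $\mu^{\cL}_{\sigma}(f)$ using the cocycle rule, property~(3) of Proposition~\ref{P:HMK-functorial}, namely $\mu_{\phi\circ\rho}^{\cL}(f)=\mu_{\phi}^{\cL}(\rho\cdot f)+\mu_{\rho}^{\cL}(f)$, applied twice to split off $g_2$ on the right and $g_1$ on the left. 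Since $g_1, g_2\in G(R)$, property~(4) of Proposition~\ref{P:HMK-functorial} says each of their HMK-indices vanishes, so the two $G(R)$-factors contribute nothing and I am left with
\[
\mu^{\cL}_{\sigma}(f)=\mu^{\cL}_{\langle\rho\rangle}(g_2\cdot f)=\mu^{\cL}_{\rho}(g_2\cdot f).
\]
(Here the first equality absorbs $g_1$ and the second is just the notational convention $\mu^{\cL}_{\rho}:=\mu^{\cL}_{\langle\rho\rangle}$.) Since $g_2\in G(R)$, the point $g_2\cdot f$ is again an $R$-point of $M$ defining the same model as $f$, so it is harmless to replace $f$ by it; the assumed one-parameter inequality applies to $g_2\cdot f$ and gives $\mu^{\cL}_{\rho}(g_2\cdot f)\le 0$, hence $\mu^{\cL}_{\sigma}(f)\le 0$. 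As $\sigma$ was arbitrary among $M$-adapted elements, the numerical criterion now yields semistability.

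The main obstacle, and the point requiring genuine care, is the bookkeeping of which elements are $M$-adapted throughout this reduction. The decomposition $\sigma=g_1\langle\rho\rangle g_2$ produced by Iwahori/elementary-divisors is a statement about double cosets in the abstract group $G(K)$, but the HMK-index and the notion of being $M$-adapted are defined relative to the action on $M$ and the choice of $f$; I must check that the intermediate elements (in particular $\langle\rho\rangle$ acting on $g_2\cdot f$) are indeed $M$-adapted so that the relevant indices are defined and property~(3) legitimately applies, and that the one-parameter subgroup $\rho$ extracted is itself $M$-adapted to the relevant point. The cleanest way to handle this is to work with the identity \eqref{E:valulations} relating $\val_t$ of pullbacks of $\fD$, which is finite precisely on models lying in $M$ (condition~\eqref{ss1}), and to track finiteness at each step; because $g_1,g_2$ lie in $G(R)$ they preserve the property of being an honest $R$-point of $M$, so $M$-adaptedness is transported correctly along the decomposition. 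Once this compatibility is confirmed, the argument is a direct concatenation of properties~(3) and~(4), exactly as the sentence preceding the proposition (``Combining properties (3) and (4)\dots'') advertises.
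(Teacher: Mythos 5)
Your argument is correct and is precisely the proof the paper intends: the paper's entire ``proof'' is the one-line remark that the statement follows by combining the Iwahori/elementary-divisors decomposition $\sigma=g_1\langle\rho\rangle g_2$ with properties (3) and (4) of Proposition~\ref{P:HMK-functorial}, which is exactly the concatenation you carry out. Your final replacement of $f$ by $g_2\cdot f$ with $g_2\in G(R)$ is consistent with how the paper itself applies the criterion (testing one-parameter subgroups in every system of coordinates, i.e.\ up to the $G(R)$-action under which both semistability and the HMK-index are invariant by property (4)), so no gap remains.
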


\begin{remark} If we work over a base field $k$, and $f\colon \Delta \to M$ is a constant map whose image is the $k$-point 
$x\in X$, then $\mu^{\cL}_{\rho}(f)$ is the usual Hilbert-Mumford index of $x$ with respect to $\cL$ and $\rho$.
This explains naming $\mu^{\cL}_{\rho}(f)$ Hilbert-Mumford-Koll\'ar index.
\end{remark}


\subsection{Motivating example apr\`es Koll\'ar}\label{S:GL}
 
In this subsection, we recast Koll\'ar's notion of stability for homogeneous polynomials,
which motivates our whole approach, in the language of $\fD$-semistability.   We work over a field $k$ in this example.

Let $M=\overline{M}=\PP(U)$\footnote{Our convention is that $\PP(U)$ means the space of lines in $U$.}, where $U$ is an algebraic representation of a reductive algebraic group $G$.

We have $\Pic^{G}(M)=\ZZ\oplus \Ch(G)$, where $\Ch(G)$ is the character group of $G$. 
The first summand of $\Pic^{G}(M)$ is generated by $\cO(1)$ with its canonical 
$G$-linearization induced by the $G$-action on $U$, and the second by the $G$-linearized
line bundles
 \[\{\cO^{\chi}\mid \chi\in \Ch(G)\}\] 
where $\cO^{\chi}$ is the trivial line bundle linearized by the character $\chi$.  

Suppose $\cL=\cO(m)^{\chi}:=\cO(m)\otimes \cO^{\chi}$, and $\fD$ is a $G$-invariant section 
of $\cL$.  We now explicate the numerical criterion of Proposition \ref{reductive-NC} for $\fD$-semistability of a map $f\colon \Delta \to M$.

Suppose $\dim U=n$ and $\rho$ is a 1-PS (one-parameter subgroup) $\rho$ of $G$. Choose a basis $u_1,\dots, u_n$ of $U$ on
which $\rho$ acts diagonally as
\[
\rho(t)=\diag (t^{w_1},\dots, t^{w_n}).
\]
Consider now an $R$-point $f\colon \Delta\to M$ given by an equation $F(t)=\sum_{i=1}^n F_i(t)u_i \in U\otimes_k R$,
where $F_i(t)\in R$, and $F(0) \neq 0\in  U$. Then we have the following notion of multiplicity as defined by Koll\'ar:
\begin{equation}\label{E:mult}
\mult_{\rho}(F):=\max \{N \mid \rho\cdot F(t) \in (t^N)\}=\min \{\val_t \left(F_i(t)t^{w_i}\right)\mid i=1,\dots, n\}.
\end{equation}
Setting 
\begin{equation}\label{E:f-rho}
F^{\rho}(t):=\frac{\rho\cdot F(t)}{t^{\mult_{\rho}(F)}} = \frac{1}{t^{\mult_{\rho}(F)}} \sum_{i=1}^n F_i(t)t^{w_i} u_i,
\end{equation}
we see that the $R$-point $\rho \cdot f\colon \Delta \to \PP(U)$ is given precisely by the equation $F^{\rho}(t)\in R\otimes U$.

\begin{lemma} With setup as above, we have
\begin{align}
\mu^{\cO(1)}_{\rho}(f)&=\mult_{\rho}(F(t)), \label{HMK-O1} \\
\mu^{\cO^{\chi}}_{\rho}(f)&=-\langle \chi, \rho\rangle, \label{HMK-chi} 
\end{align}
where $\langle \chi, \rho\rangle$ is the integer obtained by pairing $\chi$ and $\rho$. 
Consequently, for $\cL=\cO(m)^{\chi}$,
\begin{equation}
\mu^{\cL}_{\rho}(f)=m\mult_{\rho}(F(t))-\langle \chi, \rho\rangle.
\end{equation}

\end{lemma}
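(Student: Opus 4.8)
The plan is to unwind the definition of the HMK-index directly against the explicit equations for $f$ and $f'=\rho\cdot f$, and then to reduce the general case $\cL=\cO(m)^{\chi}$ to its two generators $\cO(1)$ and $\cO^{\chi}$ by the additivity in Proposition~\ref{P:HMK-functorial}(1). First I would record the geometric meaning of the data. An $R$-point $f\colon \Delta\to\PP(U)$ is the same as a line subbundle $f^*\cO(-1)\subset U\otimes_k\cO_{\Delta}$, and since $F(0)\neq 0$ the section $F$ is nowhere-vanishing, hence a trivialization of $f^*\cO(-1)$; dually $F^{\vee}$ trivializes $f^*\cO(1)$. By the very definition \eqref{E:mult} of $\mult_{\rho}(F)$, we likewise have $F^{\rho}(0)\neq 0$, so $F^{\rho}$ trivializes $(f')^*\cO(-1)$. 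These trivializations are the isomorphisms $i,i'$ of \eqref{E:i-isoms} I would use throughout.

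The heart of \eqref{HMK-O1} is the single identity
\[
\langle\rho\rangle\cdot F \;=\; t^{\mult_{\rho}(F)}\,F^{\rho}\qquad \text{in } U\otimes K,
\]
which is exactly \eqref{E:f-rho}. On the tautological subbundle $\cO(-1)\subset U\otimes\cO$ the $G$-linearization is simply the restriction of the $G$-action on $U$, so the map $\rho^{-1}\colon \cO(-1)_{f'(\spec K)}\to \cO(-1)_{f(\spec K)}$ carries the generator $F^{\rho}$ to $t^{-\mult_{\rho}(F)}F$. Tracing this through \eqref{E:isom-HMK} produces the canonical element $t^{-\mult_{\rho}(F)}\in K/R^*$, i.e. $\mu^{\cO(-1)}_{\rho}(f)=-\mult_{\rho}(F)$; since $\cO(1)\otimes\cO(-1)$ is the trivially linearized bundle (whose HMK-index is $0$), additivity gives $\mu^{\cO(1)}_{\rho}(f)=\mult_{\rho}(F)$, which is \eqref{HMK-O1}. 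Working with $\cO(-1)$ rather than $\cO(1)$ directly is what keeps the linearization manifest and avoids a transpose-inverse computation.

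For \eqref{HMK-chi} the fibers of $\cO^{\chi}$ are canonically trivial, so $i$ and $i'$ are these canonical trivializations and the only content is the linearization, which over the $K$-point $\langle\rho\rangle$ is multiplication by the scalar $\chi(\langle\rho\rangle)\in K^*$. Since $\langle\rho\rangle=\rho\circ\iota$ with $\iota(z)=t$, and $\chi\circ\rho\colon\GG_m\to\GG_m$ is the character $z\mapsto z^{\langle\chi,\rho\rangle}$, this scalar equals $t^{\langle\chi,\rho\rangle}$; applying $\rho^{-1}$ inverts it and yields $\mu^{\cO^{\chi}}_{\rho}(f)=-\langle\chi,\rho\rangle$. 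Finally, writing $\cO(m)^{\chi}=\cO(1)^{\otimes m}\otimes\cO^{\chi}$ and applying additivity twice assembles the last display, $\mu^{\cL}_{\rho}(f)=m\,\mult_{\rho}(F)-\langle\chi,\rho\rangle$.

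I expect the main obstacle to be purely a matter of orienting the linearization isomorphism in \eqref{E:isom-HMK} correctly and fixing the sign of the pairing $\langle\chi,\rho\rangle$: the mathematical content collapses to the displayed relation $\langle\rho\rangle\cdot F=t^{\mult_{\rho}(F)}F^{\rho}$ together with the evaluation $\chi(\langle\rho\rangle)=t^{\langle\chi,\rho\rangle}$, and once the direction of $\rho^{-1}$ is pinned down consistently the remaining steps are formal.
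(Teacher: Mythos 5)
Your proposal is correct and follows essentially the same route as the paper: both reduce $\cO(1)$ to the tautological subbundle $\cO(-1)$ trivialized by $F$ and $F^{\rho}$, read off the index from the relation $\rho\cdot F=t^{\mult_{\rho}(F)}F^{\rho}$, handle $\cO^{\chi}$ by noting that the linearization acts on the canonical generator $1$ by $t^{\langle\chi,\rho\rangle}$, and assemble the general case by additivity from Proposition~\ref{P:HMK-functorial}.
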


\begin{proof} We first establish \eqref{HMK-O1}. By Proposition \ref{P:HMK-functorial}, we have
\[
\mu^{\cO(1)}_{\rho}(f)=-\mu^{\cO(-1)}_{\rho}(f).
\]
Since $f\colon \Delta \to \PP(U)$ is given by $F(t) \in R\otimes U$ with $F(0)\neq 0$, 
we have a canonical isomorphism \[
\HH^0(\Delta, f^*\cO(-1))=R F(t).
\]  
Similarly, we have a canonical isomorphism 
\[
\HH^0(\Delta, (\rho\cdot f)^*\cO(-1))=R F^{\rho}(t).
\]
Since $t^{\mult_{\rho}(F)}=\dfrac{\rho\cdot F(t)}{F^\rho(t)}$, by definition, 
we see that $\mu^{\cO(-1)}_{\rho}(f)=-\mult_{\rho}(F(t))$ and the claim follows.

For \eqref{HMK-chi}, note that 
$\rho$ sends the generator $1$ of $\HH^0(\Delta, f^*\cO^{\chi})=R$ to $t^{\langle \chi, \rho\rangle}$ times the generator $1$ of  $\HH^0(\Delta, (\rho\cdot f)^*\cO^{\chi})=R$.

\end{proof} 

\subsubsection{Homogeneous polynomials} \label{def-equiv}
Suppose $V$ is an $n$-dimensional vector space and $G=\GL(V)$. Then 
$U=\Sym^d V$ is the space of degree $d$ homogeneous polynomials in $n$ variables. The group $G$ has exactly one character, namely the determinant $\chi=\det$.
We take $\fD$ to be the discriminant divisor on $\PP(U)$.  Then $\fD$ is a $G$-invariant section of $\cL=\cO(m)^{-\frac{m}{n}\chi}$, where $m=n(d-1)^{n-1}$ is the degree of
the discriminant.  
We see that for $F\in R\otimes U$, with $F(0)\neq 0$, our definition of multiplicity of $F$ with respect to any one-parameter subgroup $\rho$ of $G$ coincides with Koll\'ar's definition 
of multiplicity. Furthermore, the Hilbert-Mumford-Koll\'ar index of $f$ with respect to $\rho$ acting in some basis of $V$ diagonally with weights $w_1,\dots, w_n$ is 
\begin{equation}\label{E:homogeneous-HMK}
\mu_{\rho}^{\cL}(f)=m\left(\mult_{\rho}(F)-\frac{d}{n}(w_1+\cdots+w_n)\right).
\end{equation}
Summarizing, we conclude that our definition of $\fD$-semistability in $\PP(\Sym^{d} V)$ coincides with Koll\'ar's definition of semistability for generically smooth families
of degree $d$ hypersurfaces in $\PP^{n-1}$ as given in \cite[Definition (3.3)]{kollar-polynomials}.

\section{Koll\'ar stability of weighted hypersurfaces}\label{numeric}

In this section, we develop Koll\'ar stability of weighted (Cartier) hypersurfaces in arbitrary weighted projective spaces, generalizing Koll\'ar's theory for ordinary
hypersurfaces in \cite{kollar-polynomials}. We start with some generalities.  While cumbersome at the first glance, they allow us to work over an arbitrary DVR. 

\subsection{Parameter spaces with group action} 
Fix $n$ positive integers
$\{c_i\}_{i=1}^n$
and let $S:=\ZZ[x_{1},\dots, x_n]$ be the graded ring with grading 
given by $\ON{wt}(x_{i})=c_i$. We let $\PP_{\ZZ}(c_1,\dots,c_n)=\proj_{\ZZ} S$.
Given a ring $A$, we let $S_A:=S\otimes A=A[x_1,\dots,x_n]$ and 
$\PP_A(c_1,\dots,c_n)=\proj_{A} S_A$. 
We set 
\begin{equation}
G=\Aut_{gr} \Cox(\PP(c_1,\dots,c_n))
\end{equation}
to be the group scheme of graded automorphisms of the Cox ring of $\PP(c_1,\dots,c_n)$. 
For a ring $A$
we have that $G(A)=\Aut_{gr}(S_A)$ is the group of graded $A$-algebra automorphisms
of $S_A$.

We emphasize that the generators $x_i$'s of $A[x_1,\dots, x_n]$ are not fixed throughout, but 
are determined only up to an element of $G(A)$. Any such choice 
of generators with $\ON{wt}(x_{i})=c_i$ will be called a system of quasihomogeneous coordinates (or simply, coordinates) in $S_A$. 

We will be interested in two kinds of parameter spaces associated to $\PP(c_1,\dots,c_n)$. To define them, let $\pi\colon \PP_{\ZZ}(c_1,\dots,c_n)\to \spec \ZZ$ be the structure morphism.
The first is the space of degree $d$ hypersurfaces in $\PP(c_1,\dots,c_n)$:
\begin{equation}
\Hyp(d):=\mathbf{P}(\pi_*\cO_{\PP(c_1,\dots,c_n)}(d)).
\end{equation}
Note that $\Hyp(d)$ is smooth and projective over $\spec \ZZ$ and $\Pic(\Hyp(d))\simeq \ZZ$. 

Suppose now $e<d$ are positive integers.
Let $q\colon \cQ\to \Hyp(e)$ be the universal degree $e$ hypersurface, where $\cQ\hookrightarrow \Hyp(e)\times_{\spec \ZZ} \PP_{\ZZ}(c_1,\dots,c_n)$. 
We define 
\begin{equation}
\Int(d,e):=\mathbf{P}(q_* \cO_{\cQ}(d)),
\end{equation}
to be the space of $(d,e)$-intersections in $\PP(c_1,\dots,c_n)$.
Note that $\Int(d,e)$ is smooth and projective over $\spec \ZZ$ and $\Pic(\Int(d,e))\simeq \ZZ^2$. 

The group scheme $G$ acts on $\Hyp(d)$ and $\Int(d,e)$ via its natural action on $S$. This will be the only group action we consider for the rest of the paper. 

 \subsection{Stability of DVR-valued points} Suppose now $R$ is a DVR with a uniformizer $t$ and the fraction field $K$. We let $k=R/(t)$ be the residue field.
 For $F\in S_R=R[x_1,\dots,x_n]$, we denote by $F_0$ its image in $S_k=k[x_1,\dots,x_n]$. 
 
 Our goal is to understand stability of $R$-points of $\Hyp(d)$ and $\Int(d,e)$
 with respect to various $G$-linearized line bundles on these parameter spaces. We begin with describing $R$-points of $\Hyp(d)$ and $\Int(d,e)$:
 

\subsubsection{}
An $R$-point $f\colon \spec(R)\to \Hyp(d)$ is simply an element $F\in R[x_1,\dots,x_n]_d$ 
such that $F_0 \neq 0 \in k[x_1,\dots,x_n]$. We call such $F$ the equation of $f$.  (It is of course, defined up to a unit in $R$.) 


\subsubsection{} An $R$-point $f\colon \spec(R)\to \Int(d,e)$ is an ideal $I=(F,H)\subset R[x_1,\dots,x_n]$ such 
that $H\in R[x_1,\dots,x_n]_{e}$ and $F\in R[x_1,\dots,x_n]_{d}$ and 
such that $H_0\neq 0 \in k[x_1,\dots,x_n]_{e}$ and $F_0\notin (H_0)$. We call such ideal $I$ a $(d,e)$-intersection, and $F$ and $H$ the equations of $I$.  
Note that $H$ is defined up to a unit in $R$, but $F$ is defined only up to addition of a multiple of $H$. 

\subsubsection{The action of $G(K)$ on $R$-points of $\Hyp(d)$} 
Suppose $\rho\in G(K)$ and $f\colon \spec(R) \to \Hyp(d)$ is an $R$-point with the equation 
$F \in R[x_1,\dots,x_n]_d$.  Just as in the case of homogeneous polynomials (cf. \eqref{E:mult}), we define the $\rho$-multiplicity of $F$ to be:
\begin{equation}\label{E:mult-2}
\mult_{\rho}(F):=\max \left\{N \mid \frac{\rho\cdot F}{t^N}\in  R[x_1,\dots,x_n]\right\}.
\end{equation}
Set
\begin{equation}\label{E:f-rho-2}
F^{\rho}:=\frac{\rho\cdot F}{t^{\mult_{\rho}}(F)}.
\end{equation}
Then $F^{\rho}\in R[x_1,\dots,x_n]$ but $F^{\rho}_0\neq 0 \in k[x_1,\dots,x_n]$ and so the equation of the $R$-point $\rho \cdot f$ is precisely $F^{\rho}$. 

\subsubsection{The action of $G(K)$ on $R$-points of $\Int(d,e)$} \label{equation-int}
Similarly, for a fixed $\rho\in G(K)$ and an $R$-point $f\colon \spec(R) \to \Int(d,e)$ given by a $(d,e)$-intersection ideal $I=(F,H)$,  
where $F$ and $H$ are equations of $I$ as above, we consider $H^{\rho}=\dfrac{\rho\cdot H}{t^{\mult_{\rho}(H)}}$ and $F^{\rho}=\dfrac{\rho\cdot F}{t^{\mult_{\rho}(F)}}$.
If
\begin{equation}\label{E:independence}
(F^{\rho})_0 \notin (H^{\rho})_0 \subset k[x_1,\dots,x_n],
\end{equation}
then the generators $(F,H)$ are called the equations of $f$ adapted to $\rho$. We can always choose a representation $F^{\rho}=QH^{\rho}+t^{q} A$ in $S_R$ with the maximal possible $q\geq 0$
(and $q=0$ if and only if $(F,H)$ are adapted to $\rho$). Then the $R$-point $\rho\cdot f$ is given by the $(d,e)$-intersection ideal $(A, H^{\rho})$. 

\subsubsection{One-parameter subgroups} 
We will say that $\rho\in G(K)$ is a one-parameter subgroup of $G$ if in some coordinates $x_1, \dots, x_n$ 
on $S_R=R[x_1,\dots,x_n]$, we have that $\rho$ acts diagonally:
\[
\rho\cdot x_i = t^{w_i} x_i. 
\]
The vector 
$(w_1,\dots,w_n)\in \ZZ^n$ will be called a weight system of $\rho$, and if $x_i$'s are understood, we will often write $\rho=(w_1,\dots,w_n)$ to define $\rho$. 

For an integer $v\in \{c_1,\dots,c_n\}$, we denote by
\[
W_v(\rho):=\sum_{\ON{wt}(x_j)=v} w_j,
\]
the sum of the $\rho$-weights of the coordinates of graded weight $v$ in $S$. 

\begin{remark} 
The name one-parameter subgroup comes from a group scheme morphism $\GG_{m,R}:=\spec (R[z, z^{-1}]) \to G$ and the $K$-point of $\GG_{m,R}$ given by $t\in K$; see \ref{1-PS}.
\end{remark}

\subsection{$G$-linearized line bundles} 

To obtain a numerical criterion for Koll\'ar semistability of an $R$-point $f\colon \spec(R)\to M=\Hyp(d)$ (or $M=\Int(d,e)$) 
in terms  of its defining equation over $R$,
we must compute the Hilbert-Mumford-Koll\'ar index $\mu^{\cL}_{\rho}(f)$ 
for every $G$-linearized line bundle $\cL\in \Pic^{G}(M)$ and every one-parameter subgroup $\rho$ of $G$. 

To understand $\Pic^{G}(M)$, note that every $G_K$-linearized line bundle on $M_K$ extends uniquely to a $G_R$-linearized line bundle on $M_R$.  
Working over the field $K$, so that the standard theory (and language) of algebraic groups over a field applies,
we have that 
$\Pic^{G}(M)=\Pic(M)\oplus \Ch(G)$, where $\Ch(G)$ is the character group of $G$. 
\begin{remark} 
The $\Ch(G)$-portion of $\Pic^{G}(M)$ is the pullback of $\Pic^{G}(\Spec(R))$ so is independent of $M$. 
\end{remark}

Suppose there are exactly $s$ distinct integers among $c_i$'s, namely \begin{equation} v_1 < v_2 < \cdots < v_s.\end{equation}
For each $i=1,\dots,s$, we have a character of $G_K$ given by the determinant of a $K$-linear transformation obtained by restricting an element of $G_K$ to 
the $K$-subspace \[
K\langle x_j \mid \text{$x_j$ has weight $\leq v_i$ in $S$}\rangle \subset S/(x_1,\dots,x_n)^2.
\]
The resulting $s$ characters freely generate the character group of $G_K$. It follows that $\Ch(G)\simeq \ZZ^s$ and so
\begin{equation}
\Pic^{G}(M) \simeq  \Pic(M)\oplus \ZZ^{s}.
\end{equation} 

\subsubsection{Irrelevant one-parameter subgroup} \label{irrelevant}
Notice that given a system of quasihomogeneous coordinates $x_1,\dots, x_n$, we have the irrelevant one-parameter subgroup $\rho_{irr}$
acting via
\[
\rho_{irr} \cdot (x_1,\dots, x_n)=(t^{c_1}x_1, \dots, t^{c_n}x_n).
\]
Since $\rho_{irr}$ fixes all the points of $M$, a necessary
condition for a $G$-linearized line bundle $\cL$ to have a nonzero $G$-invariant section is 
that $\mu_{\cL}^{\rho_{irr}}([X])=0$ for some (equivalently, every) $[X]\in M$. 
This implies that the subspace of $\Pic^{G}(M)\otimes \QQ$ 
spanned by $G$-linearized line bundles with invariant nonzero sections has 
dimension at most $s$.

\subsubsection{Tautological line bundles}
We treat parameter spaces $M=\Hyp(d)$ and $M=\Hyp(d,e)$ at the same time.  Let $\pi\colon \PP(c_1,\dots,c_n) \to \spec R$ be the structure morphism.

Suppose $q\colon \cQ\to M$ is the universal family over $M$, where $\cQ \hookrightarrow M\times_{R} \PP(c_1,\dots,c_n)$. 
(Note that $q$ is flat when $M=\Hyp(d)$ but not when $M=\Hyp(d,e)$). Let $\cI$ be the ideal sheaf 
of $\cQ$ in $M\times_{R} \PP(c_1,\dots,c_n)$ and $\pr_1\colon M\times_{R} \PP(c_1,\dots,c_n)\to M$ the projection morphism.

We obtain the following $G$-linearized line bundles 
\begin{align}
T_m&:=\det \pi_*\bigl(\cO_{\PP_R}(m)\bigr)=\det (\pr_1)_*\bigl(\pr_2^*\cO_{\PP_R}(m)\bigr), m\geq 1 \\
J_m&:=\det (\pr_1)_*\bigl(\cI(m)\bigr), \quad  \text{where} \ \begin{cases} m\geq d, \text{if $M=\Hyp(d)$} \\ d\geq m\geq e, \text{if $M=\Int(d,e)$.} \end{cases}
\end{align}
Note that $(\pr_1)_*\bigl(\cI(m)\bigr)$ is a vector bundle on $\Int(d,e)$ for $m\leq d$.


\subsubsection{Hilbert-Mumford-Koll\'ar indices}
Let $\rho$ be a one-parameter subgroup acting by $\rho\cdot x_{i}=t^{w_{i}} x_{i}$ in some coordinates. 
Let $\ON{mon}(S_r)$ be the set of all monomials in $S_r$ in same coordinates. 
Denote the $\rho$-weight of a monomial by $w_\rho (\prod x_{i}^{d_{i}}) = \sum d_{i} w_{i}$ and by $w_{\rho}(S_r)$ the sum of the $\rho$-weight of all 
monomials in $S_r$.
The following result computes HMK-indices of the tautological line bundles on $M$: 
\begin{prop}\label{HMK-tautological} Suppose $\rho$ is a one-parameter subgroup acting by $\rho\cdot x_{i}=t^{w_{i}} x_{i}$ in some coordinates. 
\item[(A)] $M=\Hyp(d)$. For $f\colon \spec(R) \to M$ given by an equation $F=F(x_1,\dots,x_n)\in S_d$, we have 
\begin{align}
\mu_{\rho}^{T_m}(f) &= -w_{\rho}(S_{m}), \\
\mu_{\rho}^{J_m}(f) &= -(w_{\rho}(S_{m-d})+\vert \ON{mon}(S_{m-d})\vert \mult_{\rho}(F)).
\end{align}
\item[(B)] $M=\Int(d,e)$. For $f\colon \spec(R) \to M$ given by a $(d,e)$-intersection ideal $(F,H)$, we have
\begin{align}
\mu_{\rho}^{T_m}(f) &= -w_{\rho}(S_{m}), \\
\mu_{\rho}^{J_m}(f) &= -(w_{\rho}(S_{m-e})+\vert \ON{mon}(S_{m-e}) \vert \mult_{\rho}(H)), \qquad e\leq m\leq d-1, \\
\mu_{\rho}^{J_d}(f) &\leq -(w_{\rho}(S_{d-e})+\vert \ON{mon}(S_{d-e}) \vert \mult_{\rho}(H)+\mult_{\rho}(F)). \label{HMK-int}
\end{align}
If $(F,H)$ are adapted to $\rho$, then equality holds in \eqref{HMK-int}.
\end{prop}
\begin{proof} (A) $M=\Hyp(d)$.
For $f\colon \Delta \to M$ with the equation 
$F\in S_{d}$, 
we have natural identifications of $\cO_{\Delta}$-modules:
\begin{align}
f^*(J_m) &=\widetilde{R \bigwedge_{p\in \ON{mon}(S_{m-d})} pF}, \\
f^*(T_m) &= \widetilde{R \bigwedge_{p\in \ON{mon}(S_{m})}p}.
\end{align}

Similarly, $\rho\cdot f\colon \Delta \to M$ has equation
$F^{\rho}$ and 
we also have natural identifications: 
\begin{align}
(\rho\cdot f)^*(J_m) &= \widetilde{R \bigwedge_{p\in \ON{mon}(S_{m-d})} pF^{\rho}}, \\
(\rho\cdot f)^*(T_m) &= \widetilde{R \bigwedge_{p\in \ON{mon}(S_{m})} p}.
 \end{align}
Since 
\begin{align*}
\rho\cdot \left(\bigwedge_{p\in \ON{mon}(S_{m-d})} pF \right) &= \bigwedge_{p\in \ON{mon}(S_{m-d})} (\rho\cdot p) (\rho\cdot F)
=\bigwedge_{p\in \ON{mon}(S_{m-d})} (t^{w_{\rho}(p)} p)(t^{\mult_{\rho}(F)} F^{\rho}), \\ \\
&=t^{w_{\rho}(S_{m-d})+\vert \ON{mon}(S_{m-d})\vert \mult_{\rho}(F)} \bigwedge_{p\in \ON{mon}(S_{m-d})} pF^{\rho}. \\
\rho\cdot \left(\bigwedge_{p\in \ON{mon}(S_m)} p\right) &=\bigwedge_{p\in \ON{mon}(S_m)} (t^{w_{\rho}(p)} p),
\end{align*}
the claims follow from the definition of the HMK-index in \S\ref{HMK}.

(B) $M=\Hyp(d,e)$. The first two equalities are identical to Part (A), so we only need to prove \eqref{HMK-int}. Suppose $f\colon \Delta \to M$ is given by a $(d,e)$-intersection $(F,H)$.  
We have natural identifications of $\cO_{\Delta}$-modules:
\begin{equation}
f^*(J_d) = \widetilde{R \quad F\wedge \bigwedge_{p\in \ON{mon}(S_{d-e})} pH}.
\end{equation}
The point $\rho\cdot f\colon \Delta \to M$ has equation $(A, H^{\rho})$, where $F^{\rho}=t^{q} A \pmod{(H^{\rho})}$, where $q\geq 0$, see \S\ref{equation-int}.
Since 
\begin{multline}
\rho \cdot \left( F\wedge \bigwedge_{p\in \ON{mon}(S_{d-e})} pH\right)=t^{\mult_{\rho}(F)} F^{\rho}\wedge  \bigwedge_{p\in \ON{mon}(S_{d-e})} (t^{w_{\rho}(p)} p)(t^{\mult_{\rho}(H)} H^{\rho}) \\
=t^{q+\mult_{\rho}(F)+w_{\rho}(S_{d-e})+\vert \ON{mon}(S_{d-e})\vert \mult_{\rho}(H)} A \wedge \bigwedge_{p\in \ON{mon}(S_{d-e})} pH^{\rho},
\end{multline}
we have 
\[
\mu_{\rho}^{J_d}(f) = -(q+w_{\rho}(S_{d-e})+\vert \ON{mon}(S_{d-e}) \vert \mult_{\rho}(H)+\mult_{\rho}(F)),
\]
and the claim follows.
\end{proof}

\begin{corollary}\label{C:HMK} Let $M=\Hyp_{\PP(c_1,\dots,c_n)}(d)$. Let $v_1<\cdots <v_s$ be the distinct integers among $\{c_1,\dots,c_n\}$.  
For a one-parameter subgroup $\rho$ with weight system $(w_1,\dots,w_n)$, we will write $W_{v_j}(\rho)=\sum_{i: c_i=v_j} w_i$, and 
let $r_j$ denote the number of variables with weight $v_j$.  

{\rm (1)} For every $G$-linearized line bundle $\cL \in \Pic^{G}(M)$, there exist unique rational numbers $\{a_i(\cL)\}_{i=0}^{s}$ 
such that for every one-parameter subgroup $\rho$ of $G$ with weight system $(w_1,\dots,w_n)$ and every $R$-point with the equation $F(x_1,\dots,x_n)$:
\begin{equation}\label{E:HMK-L}
\mu_{\rho}^{\cL}(F) = a_0(\cL) \mult_{\rho}(F) - \sum_{j=1}^s a_j(\cL) W_{v_j}(\rho) 
\end{equation}

{\rm (2)} Moreover, if $\cL$ has a nonzero $G$-invariant section, then $\{a_i(\cL)\}_{i=0}^{s}$ must satisfy
\begin{equation}
da_0(\cL)= \sum_{j=1}^s r_j v_j a_j(\cL) .
\end{equation}

{\rm (3)} Conversely, for every sequence of rational numbers $\{a_j\}_{j=0}^{s}$ there exists a unique line bundle $\cL(a_0,\dots,a_{s})$ whose HMK-index with respect to every 
one-parameter subgroup $\rho$ as above is 
\begin{equation}\label{E:HMK-L2}
\mu_{\rho}^{\cL(a_0,\dots,a_{s})}(F) = a_0 \mult_{\rho}(F) - \sum_{j=1}^s a_j W_{v_j}(\rho) .
\end{equation}
\end{corollary}
\begin{proof}  For a fixed $m$, $w_{\rho}(S_m)$ is a $\ZZ$-linear combination of $s$ linear functionals
$W_{v_j}(\rho)$, where $j=1,\dots, s$.  It follows from Proposition \ref{HMK-tautological}(A) that each $\mu_{\rho}^{T_m}(F)$ and $\mu_{\rho}^{J_m}(F)$ is a
$\ZZ$-linear combination of $\mult_{\rho}(F)$, and $W_{v_j}(\rho)$'s, $j=1,\dots, s$. Moreover, 
the vectors $\{T_{v_j}\}_{j=1}^{s}$ and $J_{d}$ , form an upper-triangular matrix in the basis $W_{v_1}(\rho),\dots, W_{v_s}(\rho), \mult_{\rho}(F)$,
and so span the full $\QQ$-vector space of $\QQ$-linear combinations of $W_{v_1}(\rho),\dots, W_{v_s}(\rho), \mult_{\rho}(F)$.
Since the dimension of this space is $s+1$, the rank of $\Pic^{G}(M)$, (1) and (3) follow. 

For (2),
recall from \S\ref{irrelevant} that a $G$-linearized line bundle with an invariant nonzero global section must satisfy 
\[
0=\mu_{\rho_{irr}}^{\cL}(F) = a_0(\cL) d - \sum_{i=1}^s r_i c_i a_i(\cL) \quad \text{(for every $F\in S_d$)}.
\]
\end{proof}

\begin{remark}\label{R:index} Given a $G$-linearized line bundle $\cL$ with an invariant nonzero global section,
it will be convenient to write $\cL=[a_1(\cL), \dots, a_s(\cL)]$, where the numbers $a_i(\cL)$ are uniquely determined by Corollary \ref{C:HMK}.
\end{remark}

\subsubsection{Equations of $G$-invariant divisors on $M=\Hyp_{\PP(c_1,\dots,c_n)}(d)$}
In some coordinates $x_1,\dots, x_n$, consider the universal weighted homogeneous polynomial of degree $d$ in $\PP(c_1,\dots,c_n)$:
\begin{equation}\label{E:universal}
U=\sum_{\mon(S_d)} b_{i_1,\dots, i_n} x_1^{i_1}\cdots x_n^{i_n}, 
\end{equation}
where $b_{i_1,\dots, i_n}$ are coordinates on $\HH^0(\PP(c_1,\dots, c_n), \cO(d))$.
A $G$-invariant effective divisor $\mathfrak{D}$ on $M=\Hyp_{\PP(c_1,\dots,c_n)}(d)$ is defined by a $G$-semi-invariant homogeneous form
in the variables $b_{i_1,\dots, i_n}$ (with coefficients in $R$), which we denote by the same letter $\mathfrak{D}$.
In particular, this form is semi-invariant with respect to every one-parameter subgroup $\rho$ of $G$. For $$\rho_\ell:=(\underbrace{0,\dots, 0}_{\ell-1}, 1,0,\dots, 0)$$
acting with weight $1$ on $x_{\ell}$ and weight $0$ on all other variables, we have that $\rho_{\ell}$ acts on $b_{i_1,\dots, i_n}$ 
by
\begin{equation}\label{rho-action}
\rho_\ell(b_{i_1,\dots, i_n}) = t^{i_{\ell}} b_{i_1,\dots, i_n}.
\end{equation}
Then the $\rho_\ell$-degree of $\mathfrak{D}$ is precisely the integer $a_\ell(\cO(\mathfrak{D}))$ given by Corollary \ref{C:HMK}. 
The homogeneous degree of $\mathfrak{D}$ is 
$$
\deg \mathfrak{D}=\frac{1}{d}\sum_{\ell=1}^n c_{\ell} a_\ell(\cO(\mathfrak{D}))=a_0(\cO(\mathfrak{D})).
$$ 

Given now an $R$-point $F$ of $M$, such that $F(K) \notin \Supp(\mathfrak{D})$, we have by Corollary \ref{C:HMK}
that for every one-parameter subgroup $\rho=(w_1,\dots, w_n)$, 
\begin{equation}
\mu^{\cO(\mathfrak{D})}_{\rho}(F)=\deg \mathfrak{D} \mult_{\rho}(F)-\sum_{\ell=1}^n a_\ell(\cO(\mathfrak{D})) w_\ell.
\end{equation}
Note that this is in complete analogy with the homogeneous case described by \eqref{E:homogeneous-HMK}.

\subsection{Boundary divisors}\label{S:boundary} 

We proceed to define geometrically meaningful boundary divisors on $\Hyp(d)$, with the view towards understanding their HMK indices. 
Throughout, we work under the following assumptions:
\begin{itemize}
\item $\ON{char}(K) \nmid d$. 
\item
$
\text{$\lcm(c_1,\dots, c_n) \mid d$ and $d>c_i$ (equivalently, $d\geq 2c_i$) for all $i$.}
$ This ensures that the degree $d$ hypersurface is Cartier, and that the singular points of all such hypersurfaces form a dense subset of $\PP(c_1,\dots,c_n)$.
\end{itemize}


\subsubsection{The discriminant divisor}\label{S:disc}  For some system of coordinates $x_1, \dots, x_n$, let $T\subset \PP(c_1,\dots,c_n)$ be the torus defined by $x_1x_2\cdots x_n\neq 0$.  
Let $\delta=\delta_{\PP(c_1,\dots,c_n)} \subset M=\Hyp(d)$ be the discriminant divisor defined as the closure of the locus parameterizing 
hypersurfaces that are singular at a point of $T$.  
The weighted projective space being a toric variety and $\cO(d)$ a very ample line bundle, this is an example of a more general construction of an A-discriminant
from \cite[Chapter 8]{GKZ}.

The divisor $\delta$ is defined by an irreducible $G$-semi-invariant polynomial, homogeneous in the variables $b_{i_1,\dots, i_n}$,
and semi-invariant with respect to the $\rho_{\ell}$-action for every $\ell=1,\dots, n$.  

By the assumption $\ON{char}(K) \nmid d$, we can define  the A-discriminant as the A-resultant of the partials $\frac{\partial U}{\partial x_1},\dots, \frac{\partial U}{\partial x_n}$ of the universal degree $d$ polynomial $U$
from \eqref{E:universal}.
\begin{remark}\label{remark-point}
If $n=1$, we bypass the geometric definition and simply take the discriminant of $U=b_{d/c_1}x_1^{d/c_1}$ to be given by the equation $b_{d/c_1}=0$.
\end{remark}

\subsubsection{The boundary divisors from singularities of $\PP(c_1,\dots,c_n)$}
For an integer $m \geq 1$, let $J_m\subset S$ be the ideal generated by the variables whose weight is not divisible by $m$. We define $V_m \subset \PP(c_1,\dots, c_n)$ to be 
closed subscheme defined by $J_m$. If $m_1,\dots, m_k$ is the subsequence of $c_1,\dots, c_n$ of the multiples of $m$, then $V_m \simeq \PP(m_1,\dots, m_k)$. 
We define $\delta[m]$ to be the closure of the locus parameterizing weighted degree $d$ hypersurfaces whose restriction to $V_m$ has a vanishing A-discriminant 
as a hypersurface in $\PP(m_1,\dots, m_k)$. Clearly, $\delta[1]=\delta_{\PP(c_1,\dots,c_n)}$ is just the discriminant divisor from \S\ref{S:disc}.
In affine terms, we have a surjective linear map 
\[
\pi_m\colon \HH^0(\PP(c_1,\dots,c_n), \cO(d)) \to \HH^0(V_m, \cO_{V_m}(d))=\HH^0(\PP(m_1,\dots, m_k), \cO(d)).
\]
Then 
\begin{equation}\label{E:pullback-boundary}
\delta[m]=\pi_m^*(\delta_{\PP(m_1,\dots,m_k)}).
\end{equation}
Algebraically, $\delta[m]$ is the discriminant of the weighted homogeneous polynomial obtained from $U$ by setting $x_i=0$ for every $i$ such that $m\nmid c_i$. 
It is also homogeneous in the variables $b_{i_1,\dots, i_n}$ and semi-invariant with respect to the $\rho_{\ell}$-action for every $\ell=1,\dots, n$.   

\begin{remark}
In our applications, all the weights $c_i$'s will be coprime.  In this case, we have that for $m\geq 2$, $V_m$ is a point if $m=c_i$, for some $i$, and empty otherwise.
Then $\delta[m]$ is defined simply as the vanishing locus of the coefficient of $x_i^{\frac{d}{c_i}}$ in $U$; cf. Remark \ref{remark-point} above.
\end{remark}

\subsubsection{A detour on resultants}
To understand the numbers $a_\ell(\cO(\delta[m]))$ that are necessary for the computation of the HMK-indices with respect to $\cO(\delta[m])$, we need to recall some background 
on A-resultants, as it is applicable in our case.

Consider a slightly more general situation of the parameter space of $(d_1,\dots, d_n)$-intersections in $\PP(c_1,\dots,c_n)$. 
It parameterizes $n$-tuples of weighted homogeneous polynomials $$g_i=\sum_{m \in \mon(S_{d_i})} q_{i m} \, m, \ (i=1,\dots, n)$$ in $S$ of weighted degrees $d_1, \dots, d_n$, respectively. It is an affine space with coordinates $\{q_{i m}: i=1,\dots, n, m\in \mon(S_{d_i})\}$. 

Let  $L$ be the algebraic closure of the field $K(q_{im})$. We have the following result:
\begin{prop}\label{P:resultant-degree} Assume $c_1=1$. The A-resultant of $g_1,\dots,g_n$ is a homogeneous irreducible polynomial in the variables $\{q_{i m}: i=1,\dots, n, m\in \mon(S_{d_i})\}$. For every $\ell$, 
it is separately homogenenous in the variables $\{q_{\ell m}: m\in \mon(S_{d_\ell})\}$
of degree $A_{\ell}(d_1,\dots,d_n)$, where $A_{\ell}(d_1,\dots,d_n)$ equals to: 
\begin{enumerate}
\item The length of the finite $L$-algebra 
\[
\cA_\ell:=L[x_2, x_2^{-1}, \dots, x_n, x_n^{-1}]/(g_j(1,x_2, \dots, x_n): j\neq \ell).
\]
\item The length of a subscheme of the torus $T$ given by the ideal $(g_j : j\neq \ell)$.  

\item $\frac{1}{c_1\cdots c_n}$ times the length of the subscheme of $\PP_L^{n-1} \setminus \VV(x_1\cdots x_n)$ given by the ideal $$(g_j(x_1^{c_1}, x_2^{c_2}, \dots,  x_n^{c_n}) : j\neq \ell).$$
\end{enumerate}
\end{prop}
\begin{remark} We can apply the above proposition to compute the degrees of homogeneity of the resultant also in the case $c_1=\gcd(c_1,\dots, c_n)$ by passing to 
$\PP(1, c_2/c_1, \dots, c_n/c_1)$. 
\end{remark}

\begin{proof} We dehomogenize by setting $x_1=1$.  We can now use the definition of the A-resultant of the $n$-tuple $\{g_i(1,x_2,\dots,x_n)\}_{i=1}^n$ as in \cite[Proposition-Definition 1.1, p.252]{GKZ}.
Specifically, using $c_1=1$, the affine lattices associated to the Newton polytopes of 
$g_i(1,x_2,\dots,x_n)$, $i=1,\dots, n$, naturally embed into and generate $\ZZ^{n-1}$ via $x_1^{i_1}\cdots x_n^{i_n} \mapsto (i_2, \dots, i_{n})$.
The statement about the degree of homogeneity of the resultant in the coefficients of $g_\ell$ now follows by \cite[Theorem 1.1]{pedersen-sturmfels} and the subsequent discussion on \cite[p.379]{pedersen-sturmfels};
see also \cite[p.255]{GKZ}.
\end{proof}

\begin{corollary}\label{P:characters-boundary}
 (1) Suppose $c_1=1$. The weight of the semi-invariant $\delta[1]=\delta_{\PP(c_1,\dots,c_n)}$ with respect to the $\rho_{\ell}$-action is 
\[
\frac{d}{c_{\ell}} A_{\ell}(d-c_1,\dots, d-c_n),
\] 
where $A_{\ell}(d-c_1,\dots, d-c_n)$ is as defined in Proposition \ref{P:resultant-degree}.

(2) For a positive integer $m$, let $m_1,\dots, m_k$ be the subsequence of $c_1,\dots, c_n$ of the multiples of $m$, i.e., $\{m_1,\dots, m_k\}=\{c_i \colon m\mid c_i\}$. Assume $m_1=m$. Then
the degree of the semi-invariant $\delta[m]$ with respect to the $\rho_{\ell}$ is 
\[
\frac{d}{c_{\ell}} A_{\ell}((d-m_1)/m,\dots, (d-m_k)/m), \quad \text{if $m\mid c_{\ell}$,}
\] 
and is zero otherwise. 
\end{corollary}
\begin{proof} By \eqref{E:pullback-boundary}, (1) implies (2). 

To prove (1), we use a standard specialization trick:
Consider the Fermat-like polynomial $$F=\sum_{i=1}^n u_i x_i^{\frac{d}{c_i}}.$$ By the assumption $\ON{char}(K)\nmid d$, and so the discriminant of $F$ is the resultant of 
$g_i=u_i x_i^{\frac{d}{c_i}-1}$, $i=1,\dots, n$. Since this resultant does not vanish when $u_1\cdots u_n\neq 0$, it must be equal (up to a scalar) to the monomial $\prod_{i=1}^n u_i^{A_{i}(d-c_1,\dots,d-c_n)}$,
where we used the degree of homogeneity with respect to $u_i$ established in Proposition \ref{P:resultant-degree}. Since $u_i$ has $\rho_\ell$-weight $\frac{d}{c_\ell}$ if $i=\ell$, and zero otherwise, we conclude.
\end{proof}

The following lemma gives a useful estimate on $A_{i}(d_1,\dots,d_n)$ under some further simplifying assumptions:
\begin{lemma}\label{L:bound} Suppose $c_{1}=1$ and $c_i \mid d_i$ for all $i=1,\dots, n$. Then we have
\[
A_{1}(d_1,\dots, d_n)=\prod_{j\neq 1} \frac{d_j}{c_j}.
\]
If $c_{\ell}>1$, we have a bound
\[
A_{\ell}(d_1,\dots, d_n)\leq \frac{d_1}{c_\ell} \prod_{j\neq 1,\ell}^n \frac{d_j}{c_j}.
\]
\end{lemma}

\begin{proof}
The polynomial $g_j$ defines a degree $d_j$ hypersurface in $\PP(c_1,\dots,c_n)$. Applying interpretation (2) of Proposition \ref{P:resultant-degree}, Bezout's theorem gives a bound 
\[
A_{\ell}(d_1,\dots,d_n)\leq \frac{1}{c_1\cdots c_n}\prod_{j\neq 
\ell}^n d_j,
\]
with equality exactly when we can find $g_1,\dots, \hat{g_\ell}, \cdots, g_n$ that have no common root in $\VV(x_1\dots x_n)$. If $c_1=1$, then 
using $c_i \mid d_i$, the specialization $g_i=x_i^{\frac{d_i}{c_i}}-x_1^{d_i}$ shows that this is the case for $\ell=1$.

If $c_{\ell}>1$, then $g_1,\dots, \hat{g_\ell}, \dots, g_n$ necessarily vanish at the point $x_1=\cdots =\hat{x_{\ell}}=\cdots =x_n=0$ whenever $c_{\ell}$ does not divide any of the $d_i, i\neq \ell$. 
We have a strict inequality in this case.
\end{proof}

\subsubsection{Example: The resultant of a $(5,5,4,3)$-intersection in $\PP(1,1,2,3)$}  

\begin{lemma} The degrees of homogeneity of the resultant of degree $5, 5, 4, 3$ forms $g_1,g_2,g_3,g_4$ in $\PP(1,1,2,3)$ are:
\begin{enumerate}
\item $A_1(5,5,4,3)=A_2(5,5,4,3)=10,$ 
\item $A_3(5,5,4,3)=12,$
\item $A_4(5,5,4,3)=16$.
\end{enumerate}
\end{lemma}
\begin{proof}
(1) follows directly from Lemma \ref{L:bound}.  For (2), we note that $g_4(1,x_2,x_3,x_4)$ is linear in the weight $3$ variables $x_4$, and so 
$A_2(5,5,4,3)$ is the number of intersections of two generic degree $5$ weighted hypersurfaces in $\PP(1,1,2)$ away from the vertex, which is easily seen to be $12=(25-1)/2$. 
For (3), consider the intersection of three degrees $5, 5, 4$ homogeneous forms $g_1(x_1, x_2, x_3^2, x_4^3), g_2(x_1, x_2, x_3^2, x_4^3), g_3(x_1, x_2, x_3^2, x_4^3)$ in $\PP^3$.
If $g_1,g_2,g_3$ are chosen generically, these hypersurfaces have no common roots in $x_1x_2x_3=0$ with the exception of the point $[0:0:0:1]$, where they all vanish
with multiplicities, $2, 2, 1$, respectively. Their tangent cones at this point have no common zero, and so passing to the blow-up of $\PP^3$ at $[0:0:0:1]$, we conclude that
\[
A_4(5,5,4,3)=\frac{1}{6} (100-4)=16.
\]
\end{proof}
\begin{corollary}\label{L:discriminant-sextic} Let $\delta \subset \Hyp_{\PP(1,1,2,3)}(6)$ be the discriminant divisor.  Then for an $R$-point $F\in \Hyp_{\PP(1,1,2,3)}(6)$ with $F(K)\notin \Supp(\delta[1])$, we have
\[
\mu_{\rho}^{\delta[1]}(F) =48\mult_{\rho}(F)-60(w_1+w_2)-36w_3-32w_4.
\]
\end{corollary}

\subsubsection{The cone generated by the boundary divisors}
It would be interesting to understand the cone of effective $\QQ$-linear combinations of the boundary divisors $\delta[m]$ for an arbitrary $(c_1,\dots,c_n)$,
but for our purposes it will be sufficient to understand the simplest case when 
\[
1=c_1=c_2=\cdots=c_{r-1}< c_{r}<c_{r+2}<\cdots <c_n,
\]
where $c_i$'s are all pairwise coprime. 
In this case, in the notation of Remark \ref{R:index}, we have:
\begin{equation}\label{E:HMK-delta0}
\cO(\delta[1])= [\frac{d}{1}A_{1}(d-c_1,\dots,d-c_n), \frac{d}{c_r}A_{r}(d-c_1,\dots,d-c_n), \dots, \frac{d}{c_n}A_{n}(d-c_1,\dots,d-c_n)]
\end{equation}
\begin{equation}\label{E:HMK-deltam}
\cO(\delta[c_i]) = [\underbrace{0,\dots, 0}_{i-r+1}, \frac{d}{c_i}, 0, \dots, 0], \quad i\geq r.  
\end{equation}
\begin{prop}[The balanced line bundle]\label{T:space-stability} The cone of effective linear combinations of the boundary divisors $\cO(\delta[m])$ is simplicial with extremal rays 
$\cO(\delta[c_i])$, $i=1,\dots, s$. Moreover, this cone contains a line bundle $$\cL^{bal}=\left[\frac{d}{\sum c_i}, \dots, \frac{d}{\sum c_i}\right].$$

\end{prop}
\begin{proof}
By Lemma \ref{L:bound}, for $\ell\geq r$, we have (in)equalities 
\[
\frac{d}{1}A_{1}(d-c_1,\dots,d-c_n) =d \prod_{j=2}^n \frac{d-c_j}{c_j} >
\frac{d-1}{c_\ell} \frac{d}{c_\ell}  \prod_{j\neq 1,\ell}^n \frac{d_j-c_j}{c_j}
 > \frac{d}{c_\ell} A_{\ell}(d-c_1,\dots,d-c_n),
\] 
where we have used $d-c_{\ell}>(d-1)/c_{\ell}$.

\end{proof}
 

\subsubsection{Weighted hypersurfaces with $\GG_m$-action and their HMK-indices} 

Suppose the $c_1,\dots,c_n$ is a sequence of weights such as $c_1=1$ and all the $c_i$'s greater than $1$ are coprime. (This ensures
that $\PP(c_1,\dots, c_n)$ has isolated singularities.) We continue working under the assumption $\ON{char}(K) \nmid d$.

\begin{definition}[(Partial) Fermat hypersurfaces]
\label{D:partial-fermat}
Let $S_R=R[x_1,\dots, x_n]$ be the graded 
ring with grading given by the weight vector $\vec{c}=(c_1,\dots, c_n)$. 
Define a Fermat hypersurface of weighted degree $d$ to be a smooth hypersurface given by the equation 
\[
x_1^{\frac{d}{c_1}}+\cdots+x_n^{\frac{d}{c_n}}.
\]
For $\ell\in \{1,\dots, n\}$, we define the $\ell^{th}$ partial Fermat hypersurface of weighted degree $d$ 
to be 
\begin{equation}\label{E:partial-fermat}
f_\ell:=x_1^{\frac{d}{c_1}}+\cdots+x_{\ell-1}^{\frac{d}{c_{\ell-1}}}+x_{\ell+1}^{\frac{d}{c_{\ell+1}}}
+\cdots +x_n^{\frac{d}{c_n}}.
\end{equation}
\end{definition}

Let $\rho_\ell$ be the one-parameter subgroup of $G$ given by 
$\rho_{\ell}(x_\ell) = t x_\ell$, and 
$\rho_{\ell}(x_i) = x_i$ for $i\neq \ell$.  Evidently,
$f_\ell$ is $\rho_{\ell}$-invariant.

\begin{prop}\label{P:characters-boundary-2}  
Consider the $\ell^{th}$ partial Fermat $f_{\ell}$ of degree $d$, with its $\GG_m$-action 
$\rho_{\ell}$.
\begin{align}
\mu_{\rho_{\ell}}^{\cO(\delta[1])}(f_{\ell})&= -\frac{d}{c_{\ell}} A_{\ell}(d-c_1,\dots, d-c_n),  \label{E:chi-delta-0}   \\
\mu^{\cO(\delta[m])}_{\rho_{\ell}}(f_{\ell}) &=\begin{cases} -\dfrac{d}{c_{\ell}}, & \text{if $m=c_{\ell}$}, \\
0, & \text{if $m\geq 2$ and $m\neq c_{\ell}$}. \end{cases}  \label{E:chi-delta-m}
\end{align}
\end{prop}
\begin{proof} 
This follows from Corollary \ref{P:characters-boundary}.
\end{proof}

\section{Examples}\label{examples} In this section, we give explicit examples of the theory developed so far. Three of the examples will treat stability of weighted hypersurfaces: 
\begin{enumerate}
\item Sextics in $\PP(1,2,3)$ and Tate's minimal models of elliptic fibrations from the point of view of Koll\'ar stability; see \S\ref{Tate}.
\item Quartics in $\PP(1,1,1,2)$ and interpretation of \cite[Section 4]{corti-annals} in terms of Koll\'ar stability; see \S\ref{quartics}.
\item Sextics in $\PP(1,1,2,3)$ and first steps towards establishing the existence of standard models of degree one del Pezzo fibrations.
\end{enumerate}
We also consider two example of $(d,e)$-intersections in weighted projective spaces:
\begin{enumerate}
\item Koll\'ar stability of $(4,2)$-intersections in $\PP(1,1,1,2,2)$ and application to the standard models of degree $2$ del Pezzo fibrations; see \S\ref{dP2-stab}.
\item Koll\'ar stability of $(6,3)$-intersections in $\PP(1,1,2,3,3)$.
\end{enumerate}
In the last example, we introduce the concept of T-stability of a $(6,3)$-intersection in $\PP(1,1,2,3,3)$ which will be the crucial tool in Section \ref{Sing-and-stab}.

Throughout $R$ is a DVR with a fraction field $K$ and valuation $\val\colon K \to \ZZ\cup \infty$. 

\subsection{Sextics in $\PP(1,2,3)$ and Tate's minimal Weierstrass models of elliptic curves}\label{Tate}
Every elliptic curve over $K$ has a model over $R$ given by a sextic equation in $\PP_R(1_x,2_z,3_w)$:
\begin{equation}\label{E:weierstrass}
F(x,z,w)=o w^2 + (p_1xz+p_2 x^3) w+ q_1 z^3+q_2 x^2z^2+q_3 x^4z+q_4 x^6=0.
\end{equation}
Moreover, one can always arrange for 
\begin{equation}\label{tate-condition}
o=q_1=1,
\end{equation}
in which case \eqref{E:weierstrass} is known as the Weierstrass form. 

In \cite{tate-algorithm}, Tate famously defined the \emph{minimal Weierstrass model} to be the model over $R$ satisfying \eqref{tate-condition} and 
minimizing the valuation of the discriminant of $F(x,z,w)$. Tate used the minimal model in his algorithm for determining the Kodaira-N\'eron classification of the central fiber in the associated elliptic fibration.  
Here, we demonstrate that Tate's minimal model is an example of Koll\'ar $\fD$-semistable model for an appropriate choice of $\fD$ on the space of sextics in $\PP(1,2,3)$.

By \S\ref{S:boundary}, the three boundary divisors on $\Hyp_{\PP(1,2,3)}(6)$ are: 
\begin{enumerate}
\item $\delta=\delta[1]$,  the closure of the locus of sextics singular at a non-singular point of $\PP(1,2,3)$. 
\item $\delta[2]$, sextics through $\frac{1}{2}(1,1)$-point.
\item $\delta[3]$, sextics through $\frac{1}{3}(1,2)$-point.
\end{enumerate}
By Corollary \ref{P:characters-boundary}, with respect to a one-parameter subgroup $\rho=(w_1,w_2,w_3)$, acting diagonally in some coordinates $x, z, w$, 
the HMK-indices of these boundary divisors are:  
\begin{align*}
\mu_{\rho}^{\delta[1]}(F) &=7\mult_{\rho}(F)-12w_1-6w_2-6w_3,   \quad (\ON{char}(K)\neq 2,3) \\ 
\mu_{\rho}^{\delta[2]}(F) &=\mult_{\rho}(F)-3w_2.\\
\mu_{\rho}^{\delta[3]}(F) &=\mult_{\rho}(F)-2w_3.
\end{align*}

\begin{remark} Explicitly, for the reader who might be surprised to learn that the degree of the discriminant of an elliptic curve can be $7$ (just as we were),
the equation of $\delta[1]$ is 
\begin{multline} p_1^{3} p_{2}^{3}q_1+27 o p_{2}^{4}q_{1}^{2}-p_{1}^{4}p_{2}^{2}q_{2}-36op_{1}p_{2}^{3}q_{1}q_{2}+8o
      p_{1}^{2}p_{2}^{2}q_{2}^{2}-16o^{2}p_{2}^{2}q_{2}^{3}+p_{1}^{5}p_{2}q_{3} \\ +30op_{1}^{2}p_{2}^{2}q_{1}q_{3} -8op_{1
      }^{3}p_{2}q_{2}q_{3}+72o^{2}p_{2}^{2}q_{1}q_{2}q_{3}+16o^{2}p_{1}p_{2}q_{2}^{2}q_{3}-op_{1}^{4}q_{3}^{
      2}-96o^{2}p_{1}p_{2}q_{1}q_{3}^{2} \\+8o^{2}p_{1}^{2}q_{2}q_{3}^{2}-16o^{3}q_{2}^{2}q_{3}^{2}+64o^{3}q_{1}q_{3}^{3}
      -p_{1}^{6}q_{4}-36op_{1}^{3}p_{2}q_{1}q_{4}-216o^{2}p_{2}^{2}q_{1}^{2}q_{4}+12op_{1}^{4}q_{2}q_{4} \\ +144o^{2}
      p_{1}p_{2}q_{1}q_{2}q_{4}-48o^{2}p_{1}^{2}q_{2}^{2}q_{4}+64o^{3}q_{2}^{3}q_{4}+72o^{2}p_{1}^{2}q_{1}q_{3
      }q_{4}-288o^{3}q_{1}q_{2}q_{3}q_{4}+432o^{3}q_{1}^{2}q_{4}^{2}.
      \end{multline}
\end{remark}

\begin{prop} Let $\fD^{Gor}:=\epsilon \delta[1]+\delta[2]+\delta[3]$, where $0< \epsilon \ll 1$.
Then $\fD^{Gor}$-semistable model is the Tate's minimal Weierstrass model.
\end{prop}
\begin{proof} Consider $F$ as in \eqref{E:weierstrass}.
If $\val(o)\geq 1$, then for $\rho=(1,1,1)$, 
we have $\mult_{\rho}(F)\geq 3$. 
Then 
\[
\mu_{\rho}^{\delta[1]}(F) \geq -3, \quad \mu_{\rho}^{\delta[2]}(F)\geq 0, \quad \mu_{\rho}^{\delta[3]}(F) \geq 1,
\]
and, consequently, $\mu_{\rho}^{\fD^{Gor}}(F) > - 3\epsilon+1>0$.

If $\val(q_1)\geq 1$, then for $\rho=(1,1,2)$, we have $\mult_{\rho}(F)\geq 4$ and
\begin{align*}
\mu_{\rho}^{\delta[1]}(F) &=7\mult_{\rho}(F)-12w_1-6w_2-6w_3 \geq 28-30=-2, \\
\mu_{\rho}^{\delta[2]}(F) &=\mult_{\rho}(F)-3w_2 \geq 1,\\
\mu_{\rho}^{\delta[3]}(F) &=\mult_{\rho}(F)-2w_3 \geq 0,
\end{align*}
and, consequently, $\mu_{\rho}^{\fD^{Gor}}(F) >-2\epsilon+1>0$.

We conclude that a $\fD^{Gor}$-semistable model has $\val(o)=\val(q_1)=0$ and so avoids $\delta[2]$ and $\delta[3]$ entirely. Then 
a $\fD^{Gor}$-semistable model minimizes the degree of the discriminant $\delta[1]$ subject to the condition $\val(o)=\val(q_1)=0$, which is precisely the definition 
of the Tate's model.

\end{proof}
\begin{remark}  The formula for $\mu_{\rho}^{\delta[1]}(F)$ is derived under the assumption $\ON{char}(K)\neq 2,3$, in which case it suffices to take $\epsilon<1/6$ in the statement
of the proposition.  Note however that when $0< \epsilon \ll 1$, the exact formula for $\mu_{\rho}^{\delta[1]}$ is irrelevant, and so the proposition remains true even in characteristics $2$ and $3$.
\end{remark}

\subsection{Quartics in $\PP(1,1,1,2)$ and degree $2$ del Pezzo fibrations}\label{quartics}
Assume $\ON{char}(K)\neq 2$.
Consider the space $\Hyp_{\PP(1,1,1,2)}(4)$ of degree $4$ weighted hypersurfaces in $\PP(1,1,1,2)$.  By \S\ref{S:boundary}, we have two boundary 
divisors: 
\begin{enumerate}
\item $\delta[1]$, the closure of the locus of quartics singular at a non-singular point of $\PP(1,1,1,2)$. 
\item $\delta[2]$, quartics through the $\frac{1}{2}(1,1,1)$ point of $\PP(1,1,1,2)$.
\end{enumerate}
The $K$-valued points of $M\setminus (\delta_0\cup \delta_1)$ parameterize smooth degree $2$ 
del Pezzo surfaces over $K$ (this is true for any field $K$). 

By Corollary \ref{P:characters-boundary}, with respect to a one-parameter subgroup $\rho=(w_1,w_2,w_3,w_4)$ acting diagonally in some coordinates $x_1,x_2,x_3,x_4$, 
and an $R$-point of $\Hyp_{\PP(1,1,1,2)}(4)$ with the equation $F \in R[x_1,x_2,x_3,x_4]_4$, the HMK-indices of these boundary divisors are:  
\begin{align*}
\mu_{\rho}^{\delta[1]}(F) &=40\mult_{\rho}(F)-36(w_1+w_2+w_3)-26w_4, \\
\mu_{\rho}^{\delta[2]}(F) &=\mult_{\rho}(F)-2w_4.
\end{align*}

It follows that for
\begin{equation}
\fD^{bal}:=\frac{1}{45}(\delta[1]+5\delta[2]),
\end{equation}
given by Proposition \ref{T:space-stability},
we
have 
\begin{align*}
\mu_{\rho}^{\fD^{bal}}(f) 
&=\mult_{\rho}(F)-\frac{4}{5}(w_1+w_2+w_3+w_4).
\end{align*}
We have the following result recasting a bulk of \cite[Section 4]{corti-annals} in the language of Koll\'ar stability (hence we omit the proof): 
\begin{prop}\label{T:degree-2-naive} Suppose $R=\cO_{C,p}$ is the local ring of a smooth curve over an algebraically closed field of characteristic not $2$.
Suppose $F \in R[x_1,x_2,x_3,x_4]_4$ is $\fD^{bal}$-semistable. Then $F=0$ defines a threefold del Pezzo fibration $X\to \spec R$ of degree $2$ such that $X$ has 
terminal singularities and the central fiber $X_0$ is reduced, and either irreducible or a union of two hypersurfaces in $\MP(1,1,1,2)$ of degree $2$ each passing through the $\frac{1}{2}(1,1,1)$-point.
\end{prop}
For a general choice of $A, B\in R[x_1,x_2,x_3,x_4]_2$, the family
\[
t (x_4^2+G(x_1,x_2,x_3)) + A(x_1,x_2,x_3) B(x_1,x_2,x_3)  = 0,
\]
has a central fiber that is a union of two degree $2$ hypersurfaces each passing through the $\frac{1}{2}(1,1,1)$-point and is $\fD$-semistable for any choice of a $G$-invariant effective boundary divisor $\fD$. 
In the next subsection, we illustrate how to improve Proposition \ref{T:degree-2-naive} by working with $(4,2)$-intersections in $\PP(1,1,1,2,2)$.
\subsection{ $(4,2)$-intersections in $\PP(1,1,1,2,2)$} \label{dP2-stab}
Assume $\ON{char}(K)\neq 2$. Let $M=\Int_{\PP(1,1,1,2,2)}(4,2)$ be the parameter space of $(4, 2)$ intersections in $\PP(1,1,1,2,2)$, and $G=\Aut_{gr}(\Cox(\PP(1,1,1,2,2))$.

\subsubsection{Boundary divisors}\label{Sub:boundary}
Let $M^{0}$ be the locus in $M$ of $(4,2)$-intersection $(F,H)$ where $H=x_5$.
Then the translate $G\cdot M^{0}$  is an open $G$-invariant subscheme of $M$ whose complement has codimension $2$ (it is defined by the vanishing of the coefficients of $x_4$ and $x_5$ in $H$).
As $x_5=0$ defines a closed subscheme in $\PP(1,1,1,2,2)$ isomorphic to $\PP(1,1,1,2)$, we have an isomorphism $M^{0} \simeq \Hyp_{\PP(1,1,1,2)}(4)$ given by $(F,x_5) \mapsto F(x_1,\dots,x_4,0)$.
This isomorphism is $\Aut(\Cox(\PP(1,1,1,2))\times \GG_m$-equivariant, where $\Aut(\Cox(\PP(1,1,1,2))$ acts on $\ZZ[x_1,\dots,x_4]$ and $\GG_m$ acts by $\lambda\cdot x_5=\lambda x_5$. (In particular,
this $\GG_m$ acts trivially on $\Hyp_{\PP(1,1,1,2)}(4)$.)

By smoothness of  $\Hyp_{\PP(1,1,1,2)}(4)$ and $M$, the $G$-translates of the boundary divisors $\delta[1]$ and $\delta[2]$ inside $\Hyp_{\PP(1,1,1,2)}(4)$ that we defined in \S\ref{S:boundary} extend uniquely to divisors on $M$,
which we continue to denote $\delta[1]$ and $\delta[2]$. Moreover, if in some coordinates $x_1,\dots,x_5$, we have $I=(F(x_1,\dots,x_4), x_5)$, then for every one-parameter subgroup $\rho$ with weight system
$(w_1,\dots,w_4,w_5)$ in this coordinates, we have 
\begin{equation}\label{E:truncate}
\mu^{\delta[i]}_{\rho}(F,H)=\mu^{\delta[i]}_{\rho'}(F),
\end{equation}
where $\rho'=(w_1,\dots,w_4)$.

By Proposition \ref{HMK-tautological} (B), we compute the Hilbert-Mumford-Koll\'ar indices with respect to the line bundles $T_1, T_2, J_2, J_4$, and $J_4':=J_4-5J_2-T_2$ 
of an $R$-point $f\colon \Spec(R)\to M$ given by a $(4,2)$-intersection $(F,H)$:
\begin{equation}\label{E:degree2-lambdas}
\begin{aligned}
\mu_{\rho}^{T_1}(f) &=-w_{\rho}(\{x_1,x_2,x_3\})\\ &=-(w_1+w_2+w_3), \\
\mu_{\rho}^{T_2}(f) &=-w_{\rho}(S_2)=w_{\rho}(\{x_1^2,x_1x_2,x_2^2,x_1x_3,x_2x_3,x_3^2,x_4,x_5\})\\ & =-(4(w_1+w_2+w_3)+(w_4+w_5)), \\
\mu_{\rho}^{J_2}(f) &= -\mult_{\rho}(H), \\
\mu_{\rho}^{J_4}(f)  
&\leq -(5 \mult_{\rho}(H_t)+\mult_{\rho}(F_t)+4(w_1+w_2+w_3)+(w_5+w_5)). \\
\mu_{\rho}^{J'_4}(f)  
&\leq -\mult_{\rho}(F).
\end{aligned}
\end{equation}
Clearly, $T_1, T_2, J_2$, and  $J'_4$ form a basis of $\Pic^G(M)\otimes \QQ$. 

We proceed to express the boundary divisors $\delta[1]$ and $\delta[2]$ in terms of the above basis with the goal of computing their HMK-indices. To do so, 
let $\{f_{\ell}\}_{\ell=1,4}$ be the partial Fermats as in Definition \ref{D:partial-fermat}. Then we have the following two
distinguished points of $M$, $I_1:=(f_1, x_5)=(x_2^4+x_3^4+x_4^4, x_5)$ and $I_4:=(f_4, x_5) =(x_1^4+x_2^4+x_3^4, x_5)$.
The point $I_{\ell}$ is fixed by the one-parameter subgroup $\rho_{\ell}$ acting via $\rho\cdot x_{\ell}=tx_{\ell}$, 
by $\rho_{irr}=(1,1,1,2,2)$, and by $\rho_{5}=(0,0,0,0,1)$.   

We now collect the computations that we have obtained so far:

\begin{center}
\renewcommand{\arraystretch}{1.3}
\begin{tabular}{|c|c|c|c|c|c|c|c|}
\hline
$\cL=$
 & $T_1$ & $T_2$ & $J_2$ & $J'_4$ & 
$\delta[1]$ & $\delta[2]$ \\
\hline

$\mu^{\cL}_{\rho_1}(I_1)=$ & $-1$ & $-4$ & $0$ & $0$ & $-36$ & $0$ \\
\hline
$\mu^{\cL}_{\rho_4}(I_4)=$ & $0$ & $-1$ & $0$ & $0$ & $-26$ & $-2$ \\
\hline
$\mu^{\cL}_{\rho_{irr}}(I_{\ell})=$ & $-3$ & $-16$ & $-2$ & $-4$ & $0$ & $0$ \\
\hline
$\mu^{\cL}_{\rho_5}(I_{\ell})=$ & $0$ & $-1$ & $-1$ & $0$ & $0$ & $0$ \\
\hline
\end{tabular}
\end{center}
Using the method of indeterminate coefficients, we obtain:
\begin{align*}
\delta[1]&=-68T_1+ 26T_2-26J_2-40J'_4, \\
\delta[2]&= -8 T_1 + 2T_2 - 2J_2-J'_4.
\end{align*}

In particular, from \eqref{E:degree2-lambdas}, we conclude that for $f$ with equations $(F,H)$, we have
\begin{align}\label{E:HMK-boundary-degree-2}
\mu_{\rho}^{\delta[1]}(f) &\geq 40\mult_{\rho}(F_t)+ 26\mult_\rho(H_t) -36(w_1+w_2+w_3)-26(w_4+w_5) ,
\\
\mu_{\rho}^{\delta[2]}(f) &\geq \mult_{\rho}(F_t)+ 2\mult_\rho(H_t) -2(w_4+w_5).
\end{align}

For $\fD^{bal}:=\frac{1}{45}\left(\delta[1]+5\delta[2] \right)$, we then have
\begin{equation}
\mu_{\rho}^{\fD^{bal}}(f) \geq \mult_\rho(F_t)+\frac{4}{5}\mult_\rho(H_t)-\frac{4}{5}(w_1+w_2+w_3+w_4+w_5).
\end{equation}

\subsubsection{Corti's standard models in degree $2$}

Let $\fD^{ter}=\fD^{bal}+\epsilon \delta_1$, where $0<\epsilon \ll 1$. Then we have the following result:
\begin{theorem}\label{T:degree-2} Suppose $R=\cO_{C,p}$ is the local ring of a smooth curve over an algebraically closed field of characteristic not $2$.
Suppose $(F,H) \in M$ is $\fD^{ter}$-semistable. Then $F=H=0$ defines a threefold del Pezzo fibration $X\to \Delta$ of degree $2$ such that $X$ has 
terminal singularities and the central fiber $X_0$ is integral.
\end{theorem}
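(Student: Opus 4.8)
The plan is to convert each assertion of the theorem—flatness of $X\to\Delta$ with $(4,2)$-complete-intersection fibers, terminality of $X$, and integrality of $X_0$—into a statement about Hilbert--Mumford--Koll\'ar indices, and then to read it off from the defining inequality of semistability, namely $\mu_\rho^{\fD^{ter}}(f)\leq 0$ for every $M$-adapted one-parameter subgroup $\rho$. The governing identity, obtained by adding $\epsilon$ times the $\delta_1$-index to the $\fD^{bal}$-index computed above, is
\[
\mu_{\rho}^{\fD^{ter}}(f)=(1+\epsilon)\mult_{\rho}(F_t)+\left(\tfrac{4}{5}+2\epsilon\right)\mult_{\rho}(H_t)-\left(\tfrac{4}{5}+2\epsilon\right)(w_0+w_1)-\tfrac{4}{5}(w_2+w_3+w_4).
\]

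First I would settle the structural claims. Because $\fD^{ter}=\tfrac{1}{45}\delta_0+(\tfrac{1}{9}+\epsilon)\delta_1$ is an effective combination $a\delta_0+b\delta_1$, the instability computation for non-complete-intersections already forces a $\fD^{ter}$-semistable pair $(F,H)$ to be a genuine complete intersection; hence $X\subset\PP\times\Delta$ is flat with $(4,2)$-complete-intersection fibers and smooth degree $2$ del Pezzo generic fiber. Since moreover $b=\tfrac{1}{9}+\epsilon>\tfrac{1}{9}=5a$, the destabilizing computation for the subgroup $(0,0,0,0,1)$ applies verbatim, ruling out $H(0)$ being divisible by a linear form; as the only possible degree-one factors of a quadric in $\PP$ are the weight-one variables, this means $H(0)$ is an irreducible quadric and $Q_0:=\{H(0)=0\}$ is an irreducible threefold. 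Thus $X_0=Q_0\cap\{F(0)=0\}$ is a Cartier divisor on an irreducible variety, hence of pure dimension $2$, and $X\to\Delta$ is a del Pezzo fibration of degree $2$.

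The substantive step is terminality, and this I expect to be the main obstacle. The standard reduction is that $X$ is terminal iff every divisorial valuation has positive discrepancy; the point of the balanced weighting is that the $M$-adapted one-parameter subgroups correspond, via weighted blow-ups in adapted quasihomogeneous coordinates, to the monomial valuations, and for these the balanced index $\mu_\rho^{\fD^{bal}}(f)$ computes, up to a positive factor, minus the discrepancy of the extracted divisor. Granting this dictionary, the semistability inequality $\mu_\rho^{\fD^{ter}}(f)\le 0$ translates into the positivity of the relevant discrepancies, i.e. terminality. The work then splits into (i) making the valuation-to-index dictionary precise, including at the cyclic quotient points of $\PP$ lying on $X$ and including the strict-inequality bookkeeping—where the small perturbation $\epsilon\,\mu_\rho^{\delta_1}(f)$ enters—that separates terminal from merely canonical; and (ii) reducing an arbitrary destabilizing valuation to a monomial one in suitable coordinates, so that control of monomial valuations suffices. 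I would model both on the hypersurface analysis behind Theorem~\ref{T:degree-2-naive} and on Corti's discrepancy estimates in \cite{corti-annals}.

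Finally, for integrality it remains to promote the pure two-dimensional $X_0$ to an integral surface. I would first show that the $\fD^{bal}$-part already forces $X_0$ to be reduced, by producing from any multiple component a subgroup $\rho$ with $\mu_\rho^{\fD^{bal}}(f)>0$; this leaves only the possibility that $X_0$ is reducible. As in the ``union of two quadrics through the $\tfrac12$-point'' degeneration of Theorem~\ref{T:degree-2-naive}, any reducible $X_0$ splits $F(0)\vert_{Q_0}$ in a way that forces its components through the singular locus of $\PP$, so that the associated test configuration pairs strictly positively against $\delta_1$. Adding the term $\epsilon\delta_1$ then makes every such configuration strictly unstable—this is exactly the purpose of passing from $\fD^{bal}$ to $\fD^{ter}$—so a $\fD^{ter}$-semistable $X_0$ is reduced and irreducible, hence integral.
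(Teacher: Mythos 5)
Your structural reductions (complete intersection, flatness, irreducibility of $H(0)$, and the reduction of integrality to explicit destabilizing one-parameter subgroups) are in line with what the paper does, although the mechanism you attribute to $\epsilon\delta_1$ is slightly off: in the paper the reducible central fiber is already killed by the balanced part once one exploits the extra weight-two variable (the coordinate change $x_0\mapsto x_0+A$ followed by a $1$-PS acting on $x_0$ alone), and the role of the $\epsilon\delta_1$ perturbation is to force $b>5a$ and hence irreducibility of the quadric $H(0)$. You also silently collapse the case distinction that the paper's method requires: an irreducible $H(0)$ is either equivalent to $x_0=0$ (fiber in $\MP(2,1,1,1)$, the case recovered from Corti) or is an irreducible conic in the weight-one variables alone (the analogue of the $\MP(1,2,9,9)$ case in degree one), and these demand separate analyses.

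The genuine gap is the terminality step. The asserted dictionary --- that $\mu_\rho^{\fD^{bal}}$ computes, up to a positive factor, minus the discrepancy of the weighted blow-up attached to $\rho$ --- is false as an identity: the discrepancy of a monomial valuation with weights $(w_i)$ on the local coordinates and weight $1$ on $t$ enters with coefficient $1$ on $\sum w_i$ and on both multiplicities, whereas the semistability inequality carries the coefficients $\tfrac{4}{5}$ and $\tfrac{4}{5}+2\epsilon$; the two inequalities happen to point in compatible directions for small weights but are not proportional. More seriously, even a correct statement of the form ``all monomial valuations in all quasihomogeneous coordinate systems, with $t$-weight $1$ and no base change, have positive discrepancy'' does not by itself yield terminality of a threefold singularity; your step (ii), reducing an arbitrary divisorial valuation to a monomial one, is precisely the hard content and is left entirely unproved. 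The paper does not argue this way at all: it classifies the possible singular points of $X$ (the $\tfrac{1}{2}(1,1,1)$ quotient point and the Gorenstein points), first excludes non-isolated singularities --- a step absent from your proposal and necessary before any pointwise criterion applies --- and then at each point uses semistability against a short explicit list of $1$-PS to force specific monomials to appear with nonzero coefficient in the local (hyperquotient) equation, so that the recognition criteria of Lemma \ref{Key-Sing}/Corollary \ref{cDV} and the Mori--Koll\'ar--Shepherd-Barron classification (Theorem \ref{2-Gor-class}) certify a terminal $cA_n$, $cD_n$, $cE_n$, or terminal hyperquotient type. Conditions such as ``the cubic part is not a perfect cube'' are not detected by a single numerical index, which is why the case analysis, and not a valuation-theoretic dictionary, carries the proof. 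As written, your argument establishes the structural claims but leaves terminality unproved.
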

The proof is similar, but easier, compared to Theorem \ref{mainthm2}, with a part of the proof following directly from \cite[Section~4]{corti-annals}.
Therefore, we omit details.

\subsection{Sextics in $\PP(1,1,2,3)$: Towards standard models of degree $1$ del Pezzo fibrations}\label{3-6}


Let $M:=\Hyp_{\PP(1,1,2,3)}(6)$ be the space of sextics in $\PP(1,1,2,3)$, and $G=\Aut(\Cox(\PP(1,1,2,3)))$.  
We have three boundary divisors in $M$: 
\begin{enumerate}
\item $\delta[1]$, the closure of the locus of quartics singular at a non-singular point of $\PP(1,1,1,2)$. 
\item $\delta[2]$, sextics through the $\frac{1}{2}(1,1,1)$-point. 
\item $\delta[3]$, sextics through the $\frac{1}{3}(1,1,2)$-point. 
\end{enumerate}
The $K$-valued points of $M\setminus (\delta[1]\cup \delta[2] \cup \delta[3])$ parameterize smooth degree $1$ 
del Pezzo surfaces over $K$ (for any field $K$).  

Assume now $\ON{char}(K)\neq 2, 3$. Given 
$F\in R[x_1,x_2,x_3,x_4]_6$ and a one-parameter subgroup $\rho$ of $G$ acting by $\rho\cdot x_i=t^{w_i}x_i$,
we have by Proposition \ref{P:characters-boundary} and Lemma \ref{L:discriminant-sextic} that

\begin{equation}\label{E:HMK-sextics}
\begin{aligned}
\mu_{\rho}^{\delta[1]}(F) &=48\mult_{\rho}(F)-60(w_1+w_2)-36w_3-32w_4, \\
\mu_{\rho}^{\delta[2]}(F) &=\mult_{\rho}(F)-3w_3, \\
\mu_{\rho}^{\delta[3]}(F) &=\mult_{\rho}(F)-2w_4.
\end{aligned}
\end{equation}

It follows that for the divisor 
\begin{equation}
\fD^{bal}:=\frac{1}{70}(\delta[1]+8\delta[1]+14\delta[3]),
\end{equation}
given by Proposition \ref{T:space-stability},
we
have 
\begin{align*}
\mu_{\rho}^{\fD^{bal}}(f) 
&=\mult_{\rho}(F)-\frac{6}{7}(w_1+w_2+w_3+w_4).
\end{align*}
In analogy with Theorem \ref{T:degree-2-naive}, we 
can prove
\begin{theorem}\label{T:degree-1-naive} Suppose $R=\cO_{C,p}$ is the local ring of a smooth curve over an algebraically closed field of characteristic not $2$ or $3$.
Suppose $F \in R[x_1,x_2,x_3,x_4]_6$ is $\fD^{bal}$-semistable. Then $F=0$ defines a threefold del Pezzo fibration $X\to \Delta$ of degree $1$ such that $X$ has 
terminal singularities and the central fiber $X_0$ is reduced, and either irreducible or a union of two hypersurfaces in $\MP(1,1,2,3)$ of degree $3$ each passing through the $\frac{1}{3}(1,1,2)$-point.
\end{theorem}
Since this result, and its proof, is subsumed by Theorem \ref{terminality}, we omit the details.


\subsubsection{Gorenstein canonical models} We use Koll\'ar stability to give a quick self-contained proof 
of the fact that every degree $1$ del Pezzo fibration over a smooth curve has a model with Gorenstein canonical singularities. This fact has been shown by Corti \cite[Corollary-Definition~3.4 and Proposition~3.13]{corti-annals} and Loginov \cite[Corollary~B]{Loginov} in charactersitic $0$ using MMP techniques.

\begin{theorem} Suppose $R=\cO_{C,p}$ is the local ring of a smooth curve over an algebraically closed field of characteristic not $2$ or $3$.
Let $\fD^{Gor}:=\epsilon \delta_0+\delta_1+\delta_2$, where $0< \epsilon \ll 1$. 
Suppose $F \in R[x_1,x_2,x_3,x_4]_6$ is $\fD^{Gor}$-semistable. Then $F=0$ defines a threefold del Pezzo fibration $X\to \Delta$ of degree $1$ such that $X$ has 
Gorenstein canonical singularities and the central fiber $X_0$ is integral. 
\end{theorem}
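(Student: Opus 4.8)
The plan is to turn $\fD^{Gor}$-semistability into explicit conditions on the defining equation $F$ by evaluating the Hilbert--Mumford--Koll\'ar indices of \eqref{E:HMK-sextics} against a short list of diagonal one-parameter subgroups, and then to read off the asserted geometry. By additivity (Proposition~\ref{P:HMK-functorial}(1)), $\mu_\rho^{\fD^{Gor}}(f)=\epsilon\,\mu_\rho^{\delta_0}(f)+\mu_\rho^{\delta_1}(f)+\mu_\rho^{\delta_2}(f)$; taking the competing model $f'=\rho\cdot f$ in \eqref{E:valulations} and invoking Definition~\ref{D:stability} gives $\mu_\rho^{\fD^{Gor}}(f)\le 0$ for every $\rho$. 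Since $M=\vert\cO_\PP(6)\vert$ is a projective space every $\rho$ is adapted, and $G$-invariance of $\fD^{Gor}$ lets me apply each such inequality after an arbitrary change of coordinates in $G(k)$. Throughout I take $0<\epsilon<1/44$.

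For Gorenstein-ness I first record its geometric meaning. If $X_0$ meets the cyclic quotient point $v_1$ (the $\tfrac13(2,1,1)$-point), then in the chart $x_1=1$ the total space is the quotient, by the order-$3$ stabilizer, of a hypersurface $\{\bar F=0\}\subset\AA^4_{x_2,x_3,x_4,t}$ through the origin, and the stabilizer acts on the dualizing generator $\mathrm{Res}(dx_2\wedge dx_3\wedge dx_4\wedge dt/\bar F)$ with weight $2+1+1+0\equiv 1 \pmod 3$; hence $X$ has index $3$, not Gorenstein, at $v_1$, and symmetrically index $2$ at $v_2$. Thus $X$ is Gorenstein if and only if $v_1,v_2\notin X_0$, i.e. $x_1^2\in F_0$ and $x_2^3\in F_0$. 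To force $x_1^2\in F_0$, test $\rho=(1,1,1,1)$: by \eqref{E:HMK-sextics}, $\mu_\rho^{\fD^{Gor}}(f)=(2+48\epsilon)\mult_\rho(F)-5-188\epsilon$, so semistability gives $\mult_\rho(F)\le(5+188\epsilon)/(2+48\epsilon)<3$, and since $x_1^2$ is the only weight-$6$ monomial of total degree $\le 2$ this means exactly $x_1^2\in F_0$. Testing $\rho=(2,1,1,1)$ gives $\mu_\rho^{\fD^{Gor}}(f)=(2+48\epsilon)\mult_\rho(F)-7-220\epsilon$, hence $\mult_\rho(F)\le 3$; as $x_2^3$ is the unique monomial of $\rho$-weight $\le 3$ (the next value being $4$), this forces $x_2^3\in F_0$. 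Consequently $X$ meets $\PP$ only in its smooth locus, so $X$ is a Cartier divisor in a smooth fourfold, thus Gorenstein with $K_X=\cO_\PP(-1)\vert_X$; in particular $\pi$ is flat with $-K_X$ being $\pi$-ample, a del Pezzo fibration of degree $1$.

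For integrality I complete the square (here $\ON{char}(k)\neq 2$): the coefficient of $x_1^2$ is now a unit, so a $G(R)$-change brings $F$ to the form $x_1^2+\Phi(x_2,x_3,x_4)$, exhibiting $X_0$ as the double cover of $\PP(2,1,1)$ branched along $\{\Psi=0\}$ with $\Psi=\Phi_0$. This cover is integral unless $\Psi$ is (a unit times) the square of a weight-$3$ form; but any such square has total degree $\ge 4$ and so cannot contain $x_2^3$, whereas the $x_2^3$-coefficient of $\Psi$ equals that of $F_0$ and is nonzero. Hence $\Psi$ is not a square and $X_0$ is integral.

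It remains to show $X$ is canonical; since $X$ is Gorenstein this is equivalent to its having only compound Du Val singularities, which in turn follows by inversion of adjunction once $X_0$ is shown to have only Du Val singularities. This is where the term $\epsilon\delta_0$ operates. For $\rho=(1,1,1,0)$ one has $\mult_\rho(F)=\mult_p(X)$, the multiplicity of the total space at $p=([0:0:0:1],0)$, because $w_4=0$ makes $\val_t(\cdot)+w_\rho(\cdot)$ the order of $F$ in the four local coordinates $x_1,x_2,x_3,t$; then $\mu_\rho^{\fD^{Gor}}(f)=(2+48\epsilon)\mult_\rho(F)-5-128\epsilon$ forces $\mult_p(X)\le 2$, so $X$ has at worst double points. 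The weighted subgroup $\rho=(3,2,1,0)$ computes the $(3,2,1,1)$-weighted multiplicity of $X$ at $p$, and $\mu_\rho^{\fD^{Gor}}(f)=(2+48\epsilon)\mult_\rho(F)-12-228\epsilon$ forces it to be $\le 5$, excluding non-canonical double points such as $\tilde E_8\times\AA^1_t=\{x_1^2+x_2^3+x_3^6=0\}$, whose weighted multiplicity is $6$. Running these estimates in every $G(k)$-frame---so as to reach each singular point of $X_0$, all of which lie over the smooth locus of $\PP$ since $v_1,v_2\notin X_0$---should cut out exactly the compound Du Val points. The step I expect to be the main obstacle is precisely this last bookkeeping: matching the finite list of semistability thresholds coming from the three boundary divisors, in the relevant (weighted) one-parameter subgroups and coordinate frames, against the ADE/compound Du Val classification, so that every worse-than-canonical double point is destabilized while all compound Du Val points survive. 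Equivalently, one must check that minimizing $\val_t(\mathrm{Disc}\,F)$---the role of $\epsilon\delta_0$---is never obstructed by a non-canonical central singularity, paralleling the discriminant-minimization arguments of Koll\'ar and Corti.
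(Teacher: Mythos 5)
Your strategy is essentially the one the paper uses: the same translation of $\fD^{Gor}$-semistability into the inequality $(2+48\epsilon)\mult_\rho(F)\le 2w_1+3w_2+\epsilon(32w_1+36w_2+60w_3+60w_4)$, the same forcing of $x_1^2$ and $x_2^3$ into $F_0$ (the paper tests $\rho=(1,1,N,N)$ and $(2,1,N,N)$ with $N\gg 0$ where you test $(1,1,1,1)$ and $(2,1,1,1)$; both work), the same deduction of Gorensteinness and integrality, and the same test $\rho=(3,2,1,0)$ giving weighted multiplicity $\le 5$ at a singular point moved to $[0:0:0:1]$. Steps 1--3 of your write-up are complete and correct.

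The gap you flag at the end is real as written, but it is much smaller than you fear, and the paper closes it in one line: the two bounds you have already derived --- ordinary multiplicity $\le 2$, and $(3,2,1,1)$-weighted multiplicity $\le 5$ of the local equation at $P=[0:0:0:1]$ --- are \emph{exactly} the hypotheses of the cDV recognition criterion, Corollary~\ref{cDV}. After moving $P$ to $[0:0:0:1]$ and completing the square and the cube (the coefficients of $x_1^2$ and $x_2^3$ remain units under the graded coordinate changes involved), the local equation is $x_1^2+x_2^3+G$ with $G\in(x_3,x_4)^2$; then either the quadratic part has rank $\ge 2$ (a $cA_n$ point), or the cubic part of $G$ is not a cube of a linear form (a $cD_n$ point), or it is a cube and your bound $\mult_{(3,2,1,0)}(F)\le 5$ is precisely the condition $\mult_t F(0,t^2y,tz,t)\le 5$ in Corollary~\ref{cDV}(3) (a $cE_n$ point). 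So no further one-parameter subgroups and no enumeration of frames is needed beyond recentering each singular point at $[0:0:0:1]$. The one thing worth checking there, which neither you nor the paper makes explicit, is that singular points of $X$ avoid the locus $x_3=x_4=0$ (whose points cannot be moved to $[0:0:0:1]$ by graded automorphisms); this holds because $\partial F/\partial x_1=2a(t)x_1\neq 0$ along that locus away from $v_1,v_2$. Finally, a caveat the paper itself records: the example $x_1^2+x_2^3+t^2(x_3^6+x_4^6)=0$ shows the singularities need not be isolated, so ``cDV'' must be understood in the sense that a general hyperplane section through the point is Du Val; this still yields canonical, which is all the theorem claims.
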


 \begin{proof}
By \eqref{E:HMK-sextics}, $\fD^{Gor}$-semistability implies the following
condition for any 1-PS $\rho$ acting with weights $(w_1,w_2,w_3,w_4)$ in some coordinates on $\PP(1,1,2,3)$:
\begin{equation}\label{E:stab-gor-1}
(2+48\epsilon) \mult_{\rho}(F)\leq 3w_3 + 2w_4 +\epsilon (60w_1+60w_2)+36w_3+32w_4).
\end{equation}
Take $\epsilon=\frac{1}{N^2}$, where $N\gg 0$.
Write $F=a(t)x_4^2+b(t)x_3^3+G$, where $G\in (x_1,x_2)$.   
Then $\fD^{Gor}$-semistability with respect to $\rho=(N,N,1,2)$ implies that $\left(2+\frac{48}{N^2}\right)\mult_{\rho}(F)\leq 7+\frac{1}{N^2}(100+120N)$. It follows that $\mult_{\rho}(F)\leq 3$ and so $b(0)\neq 0$.
Similarly, $\fD^{Gor}$-semistability with respect to $\rho=(N,N,1,1)$ implies that $\mult_{\rho}(F)\leq 2$, and so $a(0)\neq 0$.
We conclude that every $\fD^{Gor}$-semistable family $X\to \Delta$ has a central fiber
of the form $x_4^2+x_3^3+\cdots=0$, a sextic that avoids the singularities of $\PP(1,1,2,3)$. Such sextics are Gorenstein and necessarily 
integral, by degree considerations.

Consider now a Gorenstein singular point $P\in X_0$, which after a change of coordinates, we can take to be $P=[1:0:0:0]$.
We have $F=x_4^2+x_3^2+G(x_1, x_2, x_3)$, where $G\in (x_1,x_2)^2$.  Then 
for $\rho=(0,1,2,3)$, \eqref{E:stab-gor-1} gives
\[
(2+48\epsilon) \mult_{\rho}(F)\leq 12+ \epsilon(96+72+60)=12+228 \epsilon.
\]
Thus $\mult_{\rho}(F)\leq 5$. By the recognition criterion of Du Val singularities given in Lemma \ref{duVal} below, 
the hyperplane section $x_2=ct$, where $c\in k$ is general,  has a Du Val singularity at $P$. It follows that $P$ is a cDV Gorenstein singularity, hence canonical. 

An example of $x_4^2+x_3^3+t^2(x_1^6+x_2^6)=0$ shows that $X$ can have non-isolated singularities. 

\end{proof}


\subsection{Stability of $(6,3)$-intersections in $\PP(1,1,2,3,3)$} \label{dP1-stab}

We let $M:=\Int_{\PP(1,1,2,3,3)}(6,3)$ be the parameter space of 
$(6, 3)$-intersections in $\PP(1,1,2,3,3)$.

Then we have
\begin{equation}\label{HMK-3-6}
\begin{aligned}
\mu_{\rho}^{T_1}(f) &=-w_{\rho}(\{x_1,x_2\}) =-(w_1+w_2). \\
\mu_{\rho}^{T_2}(f) &=-w_{\rho}(\{x_1^2,x_1x_2,x_2^2,x_3\}) = -(3(w_1+w_2)+w_3). \\
\mu_{\rho}^{T_3}(f) &=-w_{\rho}(\{x_1^3,x_1^2x_2,x_1x_2^2,x_2^3,x_1x_3,x_2x_3,x_4,x_5\}) \\  &= -(7(w_1+w_2)+2w_3+(w_4+w_5)). \\
\mu_{\rho}^{J_3}(f)&=-\mult_{\rho}(H) \\
\mu_{\rho}^{J_6}(f)&\leq -\mult_{\rho}(F)-8 \mult_{\rho}(H)-7(w_1+w_2)-2w_3-(w_4+w_5). \\
\mu_{\rho}^{J_6-T_3-8J_3}(f)&\leq -\mult_{\rho}(F),
\end{aligned}
\end{equation}
where the equality holds in the last two lines whenever $(F,H)$ are equations of $f$ adapted to $\rho$.

Consider now the three partial Fermats $\{f_{\ell}\}_{\ell=1,3,4}$ 
as defined in Definition \ref{D:partial-fermat}.
Then we have the following three
distinguished points of $M$: $I_\ell:=(f_{\ell}, x_5)$, $\ell=1,3,4$. 
The point $I_{\ell}$ is fixed by the one-parameter subgroup $\rho_{\ell}$ acting via $\rho\cdot x_{\ell}=tx_{\ell}$, 
by $\rho_{irr}=(1,1,2,3,3)$, and by $\rho_{5}=(0,0,0,0,1)$.   

As in \S\ref{Sub:boundary}, the boundary divisors $\delta[1]$, $\delta[2]$, and $\delta[3]$ in $\Hyp_{\PP(1,1,2,3)}(6)$ that we defined in \S\ref{S:boundary} extend uniquely to divisors on $M$. 
We now collect the HMK-indices computations that we have obtained so far:

\begin{center}
\renewcommand{\arraystretch}{1.3}
\begin{tabular}{|c|c|c|c|c|c|c|c|c|}
\hline
$\cL=$ & $T_1$ & $T_2$ & $T_3$ & $J_3$ & $J_6'$ & 
$\delta[1]$ & $\delta[2]$ & $\delta[3]$ 
\\
\hline
$\mu_{\rho_1}^{\cL}(I_1)=$ & $-1$ & $-3$ & $-7$ & $0$ & $0$ & $-60$ & $0$ & $0$ \\
\hline
$\mu_{\rho_3}^{\cL}(I_3)=$ & $0$ & $-1$ & $-2$ & $0$ & $0$ & $-36$ & $-3$ & $0$ \\
\hline
$\mu_{\rho_4}^{\cL}(I_4)=$ & $0$ & $0$ & $-1$ & $0$ & $0$ & $-32$ & $0$ & $-2$ \\
\hline
$\mu_{\rho_{irr}}^{\cL}(I_\ell)=$ & $-2$ & $-8$ & $-24$ & $-3$ & $-6$ & $0$ & $0$ & $0$ \\
\hline
$\mu_{\rho_{5}}^{\cL}(I_\ell)=$ & $0$ & $0$ & $-1$ & $-1$ & $0$ & $0$ & $0$ & $0$  \\
\hline
\end{tabular}
\end{center}
The HMK-indices of boundary line bundles are computed using \eqref{E:HMK-sextics}, and the remaining indices are obtained from
\eqref{HMK-3-6}, noting that the equations $(f_\ell, x_5)$ are adapted to each $\rho$.

Solving a simple system of linear equations, we obtain the expression for the boundary divisors in terms of tautological line bundles (we use additive notation for line bundles here):
\begin{align*}
\delta[1]&=-80T_1-28T_2+32T_3-32J_3-48J'_6 .\\
\delta[2]&=-9T_1+3T_2-J'_6.
\\ \delta[3]&=-2T_1-4T_2+2T_3-2J_3-J'_6, 
\end{align*}

In particular, 
\begin{equation}\label{E:HMK-degree-1}
\begin{aligned}
\mu_{\rho}^{\delta[1]}(f) &=48\mult_\rho(F_t)+32\mult_\rho(H_t)-60(w_1+w_2)-36w_3-60(w_4+w_5). \\
\mu_{\rho}^{\delta[2]}(f) &\geq \mult_\rho(F_t)-3w_2. \\
\mu_{\rho}^{\delta[3]}(f) &\geq \mult_\rho(F_t)+2\mult_\rho(H_t)-2(w_4+w_5).
\end{aligned}
\end{equation}

For $\fD^{bal}:=\frac{1}{70}\left(\delta[1]+8\delta[2]+14\delta[3]\right)$, we have
\begin{equation}\label{E:HMK-degree-1-bal}
\mu_{\rho}^{\fD^{bal}}(f) \geq \mult_\rho(F_t)+\frac{6}{7}\mult_\rho(H_t)-\frac{6}{7}(w_1+w_2+w_3+w_4+w_5).
\end{equation}
Collecting these results, we arrive at a semistability notion that will be central in our proof of Corti's conjecture in Section \ref{Sing-and-stab}.
\begin{prop}[T-semistability]\label{P:T-semistability} Fix $0<\epsilon \ll 1$. Then for an effective boundary divisor 
\begin{equation}
\fD^{ter}:=\fD^{bal}+ \epsilon(\delta[3]-\delta[2])=\frac{1}{70}\left(\delta[1]+8\delta[2]+14\delta[3]\right)+ \epsilon(\delta[3]-\delta[2])
\end{equation}
a $(6,3)$-intersection $(F,H)$ in $\PP_R(1,1,2,3,3)$ is $\fD^{ter}$-semistable only if for every choice of quasihomogeneous coordinates on $\PP_R(1,1,2,3,3)$, 
and for every weight system $\rho=(w_1,\dots,w_5)$, we have
\begin{equation*}
\mult_\rho(F)+ \frac{6}{7} \left( \mult_{\rho}(H) - \sum_{i=1}^{5} w_{i} \right)+\epsilon (2 \mult_{\rho}(H) - 2 (w_4 + w_5) + 3 w_3) \leq 0.
\end{equation*}
We will call $\fD^{ter}$-semistable $(6,3)$-intersections \emph{T-semistable}.
\end{prop}
Consequently, every $R$-point of $\Int_{\PP(1,1,2,3,3)}(6, 3)$ whose $K$-point defines a smooth degree $1$ del Pezzo in $\PP_K(1,1,2,3,3)$, or, equivalently, does not lie in $\cup_{i=1}^{3}\Supp(\delta[i])$, 
has a T-semistable model over $R$.\footnote{As the next section will illustrate, T stands for ``terminality.''}

\section{T-semistable and standard models of degree $1$ del Pezzo fibrations} \label{Sing-and-stab}



In this section, we prove Theorem \ref{blackbox} Part (2).  We work over an algebraically closed field $k$ with $\ON{char}(k)\neq 2,3$. Let $R$ be a DVR (and a $k$-algebra) with the residue field $k$.
Let $K$ be the fraction field of $R$, $t$ be the uniformizer, and $\val\colon K\to \ZZ\cup \infty$ the $t$-valuation on $K$. 

We work in $\PP_R(1_{x},1_{y},2_{z},3_{w},3_{s})$. Set $S=\Cox(\PP_R(1,1,1,2,3))=R[x,y,z,w,s]$, where $x,y,z,w,s$ is some choice of quasihomogeneous coordinates on $\PP_R(1,1,2,3,3)$. 
For $F\in S$, we denote by $F_0$ its residue class in $k[x,y,z,w,s]$. 
For $f=f(x,y,z,w,s)\in K[x,y,z,w,s]$ (not necessarily homogeneous), we define $\val f$ to be the minimum of the valuations of all the coefficients of $f$.

Recall the definition of a $T$-semistable $(6,3)$-intersection in $\PP_R(1,1,2,3,3)$ from Definition \ref{T-semi} or Proposition \ref{P:T-semistability}. The goal of this section is to prove:
\begin{Th}[{Theorem \ref{blackbox} Part (2)}] \label{mainthm2} 
For every T-semistable $(6,3)$-intersection $(F,H)$, the scheme $X=\{F=H=0\}\subset \PP_R(1,1,2,3,3)$ satisfies: 
\begin{enumerate}
\item The morphism $X\to \spec R$ is flat with integral fibers. 
\item Every point of $X$ is either $2$ or $3$-Gorenstein, and $-6K_X$ is relatively ample over $\spec R$.  
\item  $X$ has isolated singularities. For every $k$-point $P\in X$, the general elephant through $P$ has an isolated Du Val singularity at $P$. Consequently,  $X$ has terminal singularities. 
\end{enumerate}
In particular, $X\to \spec R$ is a standard degree $1$ del Pezzo fibration. 
\end{Th}

The { plan of the proof} is as follows. Part (1) is proved in Lemma \ref{L:integral},
which also introduces a key dichotomy of the remainder of the proof. 
We prove that $X$ has isolated singularities in Lemmas \ref{isolated-1}, \ref{isolated-2}, and \ref{isolated-3}.  
Once we establish that $X$ is a $(6,3)$-complete intersection in $\PP_R(1,1,2,3,3)$ with isolated singularities, then adjunction is applicable and gives 
$K_X = \CO_{\PP_R(1,1,2,3,3)}(-1)\vert_X$. This proves Part (2) of Theorem \ref{mainthm2}. Finally, we will show that
the general element of $\vert-K_X\vert=\vert\CO_{X}(1)\vert$ through every $k$-point
of $X$ has at worst Du Val singularities.  This is the most involved part of the analysis. 
The terminality of the total space then follows by the structure theory of threefold terminal singularities; see Proposition \ref{P:elephants} below. 



\subsection{Threefold terminal singularities and their elephants} \label{terminality}
We collect relevant results from the classification theory of terminal threefold singularities, that are well-known over $\CC$, but perhaps less familiar 
in our setting, over an algebraically closed field $k$ of $\ON{char}(k)\neq 2,3$. To state them, first recall that for a threefold $X$, \emph{an elephant} (resp., \emph{a general elephant}) is
an element (resp., a general element) of the anticanonical linear system $\vert -K_X\vert$. An overarching principle, 
first formulated by Reid \cite[p.393]{reid-YPG}, is that threefold terminal singularities
are precisely those with canonical (i.e., Du Val) general elephants. This is known over $\CC$ thanks to the explicit Koll\'ar-Shepherd-Barron-Mori classification (\cite{Mori},\cite[Theorem~6.5]{KSB}) 
of terminal threefold singularities,
but we are not aware of an analogous statement\footnote{The point being that, in positive characteristic, there 
could be other families of terminal singularities not covered by Mori's list.} in positive characteristic. However, for our purposes, only one direction is needed,  as given in a recent paper of Koll\'ar,
and valid in arbitrary characteristic: 
\begin{prop}[{\cite[Corollary 11]{kollar-MMP-recent}}]\label{P:elephants} Suppose $P\in X$ is an isolated normal threefold singularity and $E\subset X$ an elephant containing $P$. If $E$ has an isolated Du Val singularity at $P$, then $P\in X$ is a terminal singularity.
\end{prop}


We use the following recognition criterion for Du Val surface singularities:

\begin{Lemma} \label{Key-Sing}\label{duVal} 
Suppose a hypersurface in $\spec k[[x,y,z]]$ given by an equation
$f(x,y,z)=0$ has an isolated multiplicity $2$ singularity at $P=(0,0,0)$. 
Then:
\begin{enumerate}
\item If the tangent cone of $f$ is reduced, then $f$ is formally equivalent to $xy+z^{n+1}=0$, a type $A_n$ singularity, for some $n\geq 1$.
\item If $f= x^2 + g(y,z)$ and the tangent cone of $g(y,z)$ is a cubic with 
at least $2$ different linear factors, then $f$ is formally equivalent to $x^2+z(y^2+z^{n-2})=0$, a type $D_n$ singularity, for some $n\geq 4$.
\item If $f=x^2 + y^3 + h(y,z)$, where $\deg h\geq 4$, and $\val h(t^2y, t z) \leq 5$, or, equivalently, $h(y,z)$ has a $z^4$, $yz^3$, or $z^5$ term, 
then $P$ is a type $E_6, E_7$, or $E_8$ singularity.
\end{enumerate}
\end{Lemma}
\begin{proof} This follows by the standard argument as in \cite[p.375]{reid-YPG} (see also \cite{lipman, AGZV}), with the exception that when $\ON{char}(k)=5$, there are two distinct formal isomorphism classes
of $E_8$ singularities, namely $x^2+y^3+z^5$ and $x^2+y^3+z^5+yz^4$ \cite{artin-RDP-77}.
\end{proof}

\subsection{Integrality of the central fiber and the key dichotomy} \label{Step-1}
We begin by proving that a T-semistable model is flat over $\spec R$ and has an integral central fiber. In the process, we demonstrate the necessity of working 
in $\PP_R(1,1,2,3,3)$.  
Our analysis of singularities and the proof of terminality then fall into two distinct cases depending on the behavior of the central fiber. 
The following result describes this dichotomy:
\begin{Lemma}\label{L:integral}
Suppose $(F,H)$ is a T-semistable $(6,3)$-intersection. 
Then $X:=\{F=H=0\}\subset \PP_R(1,1,2,3,3)$ is, relatively over $\spec R$, a complete $(6, 3)$-intersection 
with an integral central fiber. 
Moreover, in some system of coordinates, one of the following holds:
\begin{enumerate}
\item
$H = s$, its vanishing defines $\MP(1,1,2,3)\hookrightarrow \MP(1,1,2,3,3)$ in every fiber,
and $X_0$ is an integral degree $6$ hypersurface in $\MP_k(1,1,2,3)$.
\item
$H_0 = y^3 - xz$, its vanishing defines $\MP_{k}(1,2,9,9)\hookrightarrow \MP_{k}(1,1,2,3,3)$ in the central fiber,
and $X_0$ is an integral degree $18$ hypersurface in $\MP_k(1_{\alpha},2_{\beta},9_{w},9_{s})$ not containing the line $\alpha=\beta=0$.
\end{enumerate}
\end{Lemma}
The first case allows us to find a standard model of $X_K$ which is a weighted degree $6$ hypersurface in $\PP_R(1,1,2,3)$.
The second case is more involved because the ambient weighted projective space also degenerates. That the second case necessarily appears
is illustrated in the proof of this lemma. 

\begin{proof}[Proof of Lemma \ref{L:integral}] We claim $H_0$ is irreducible. If not, then
in some coordinates, 
$x\mid H_0$. For $\rho = (1,0,0,0,0)$, we have $\mult_\rho(F) \geq 0$ and $\mult_\rho(H) = 1$, making $(F,H)$ $\rho$-unstable by \eqref{num-ss}. 

\begin{remark} This is the only place in the proof of terminality where we use the perturbed stability condition (i.e., need $\epsilon>0$ in Definition \ref{T-semi}).  
Note also that we use in an essential way the closedness of the residue field $k$ here.
\end{remark}

Up to a change of coordinates, there are exactly two possibilities for an irreducible $H_0$: 
$H_0=s$ and $H_0=y^3-xz$.
\begin{enumerate}[(1)]
\item $H_0=s$. We can change coordinates so that $H = s$. Then $H = 0$ defines 
$\MP(1,1,2,3)$ in every fiber. The central fiber $X_0$ is non-integral if and only if $F_0(x,y,z,w,0)$ 
factors non-trivially in $k[x,y,z,w]_6$. Let $A$ be its smallest positive degree factor.
\begin{enumerate}[(i)]
\item If $\deg A = 1$, then, up to a change of coordinates, $A = y$ and $(F,H)$ is destabilized by $\rho = (0,1,0,0,0)$: $\mult_{\rho}(F)=1$ and $\mult_{\rho}(H)\geq 0$.
\item If $\deg A = 2$, then, up to a change of coordinates, $A = z$ and $(F,H)$ is destabilized by $\rho = (0,0,1,0,0)$.
\item Suppose now $\deg A = 3$. If $A = w$ in some coordinates, then $(F,H)$  is destabilized by $\rho = (0,0,0,1,0)$.
Otherwise, $A$ does not depend on $w$ and so we may write 
\[F(x,y,z,w,0)=A(x,y,z) B(x,y,z) + tG(x,y,z,w) .\]
Make a change of coordinates $s'=s-A(x,y,z)$. Then in the new coordinates $(F,H)$ becomes
\begin{align*}
(F,H) &=(s'B(x,y,z) +tG(x,y,z,w), s'+A(x,y,z))
\end{align*}
For $\rho=(0,0,0,0,1)$, we now have $\mult_\rho(F) = 1$ and $\mult_\rho(H) = 0$, making $(F,H)$ $\rho$-unstable by \eqref{num-ss}.
\end{enumerate}

\item $H_0 = y^3-xz$. This cubic is precisely
the image of a closed immersion 
\begin{equation}\label{immersion}
\MP_k(1_{\alpha},2_{\beta},9_w,9_s) \hookrightarrow \MP_k(1_x,1_y,2_z,3_s,3_w) \quad \text{via $x = \alpha^3$, $y = \alpha \beta$, $z = \beta^3$.}
\end{equation}
Semistability with respect to $\rho=(1,1,1,1,1)$ implies that $F_0\notin (x,y,z)$, and so 
the central fiber $X_0$ is a degree $18$ hypersurface 
$F_0(\alpha^3,\alpha\beta,\beta^3,w,s)=0$ in $\MP_k(1_{\alpha},2_{\beta},9_w,9_s)$ with a non-zero quadratic term in $w,s$. 
If $F_0(\alpha^3,\alpha\beta,\beta^3,w,s)$ factors non-trivially in $k[\alpha,\beta,w,s]$, then necessarily it does into two degree $9$ factors with a non-zero linear term in $w,s$ each.
After a change of variables, 
we can assume that $s\mid F$. Then $(F,H)$ is destabilized by $\rho=(0,0,0,0,1)$. 
\end{enumerate}
\end{proof}

\begin{remark}\label{why}
The case (1.iii) in the proof of this lemma is the only place in the proof of Theorem \ref{mainthm2} where starting with a family whose central fiber lives in $\MP(1,1,2,3)$, 
we must use a one-parameter subgroup which is not a $K$-point of $\Cox(\PP(1,1,2,3))$ but a $K$-point of $\Cox(\PP(1,1,2,3,3))$.  In other words, 
the reducibility of the central fiber in the family
\[
t (w^2+G(x,y,z)) + A(x,y,z) B(x,y,z) = 0,  
\]
is the sole reason we consider models in $\MP(1,1,2,3,3)$ instead of $\MP(1,1,2,3)$. 
\end{remark}


This concludes the proof of Part (1) of Theorem \ref{mainthm2}, which shows
that $X$ is a threefold fibered in integral surfaces over $\spec R$. Since the generic fiber of $X$ smooth, we conclude that $X$ is nonsingular in codimension $1$. 
By adjunction, $-K_X=\cO_{\PP_{R}(1,1,2,3,3)}(1)\vert_X$ and so $-6K_X$ is a relatively ample line bundle. This establishes Part (2) of Theorem \ref{mainthm2}.

The proof of Part (3) now proceeds according to one of the two possibilities given in Lemma \ref{L:integral}, 
and is done in \S\ref{Step-2}, \ref{Step-3}, respectively. In each case, we first show that $X$ has isolated singularities, and then for every singularity $P\in X$,
 find an elephant  (= an anticanonical divisor) $E$ through $P$ such that $P\in E$ is Du Val. 
 
Since  $-K_X=\cO_{\PP_{R}(1,1,2,3,3)}(1)\vert_X$, every hyperplane $ax+by$, with either $\val a=0$ or $\val b=0$,
restricts to an elephant on $X$ fibered in elliptic curves over the base. 
We will need the following Bertini-like result: 
\begin{lemma}\label{elephant-K} For every choice of coordinates, let $E$ be the elephant given by the equation $y=cx$ (resp., $y=ctx$), where $c\in k$ is general. 
Then $E_K$ is smooth over $K$. 
\end{lemma}
\begin{proof}
After a change of variables affecting only $z,w,s$, the generic fiber $X_K$ can be written as a sextic in $\PP(1_x,1_y,2_z,3_w)$ in the Weierstrass form:
\[
w^2+z^3+p(x,y)z+q(x,y)=0,
\]
with $p(x,y)\in K[x,y]_4$ and $q(x,y)\in K[x,y]_6$.  The discriminant $D(x,y):=4p^3+27q^2$ is a non-zero binary sextic by the smoothness of $X_K$. The discriminant of the elliptic curve $E_K$ is 
$D(1,c)$ (resp., $D(1, ct)$), which will be nonzero for a general $c\in k$. The claim follows. 
\end{proof}

\subsection{Central fiber in $\MP(1,1,2,3)$} \label{Step-2}
Suppose we are in case (1) of Lemma \ref{L:integral}. We can write $(F,H)=(F(x,y,z,w), s)$ and 
treat $F$ as a sextic in $R[x,y,z,w]$. In particular, all of our coordinate changes in this subsection
will fix $s$ and come from $\Aut_{gr} (R[x,y,z,w])$.  The threefold $X$ is then a sextic $F=0$ in $\PP_R(1_x,1_y,2_z,3_w)$

The T-semistability of a complete intersection $(F(x,y,z,w), s)$ can then be interpreted in terms of $F$ alone.
Namely, if $(F,H)$ is T-semistable, then $F(x,y,z,w) \in R[x,y,z,w]_6$ satisfies:
\begin{enumerate} 
\item For every  
weight system $\rho=(a,b,c,d)$, we have that
\begin{equation}\label{E:stab-cond} 
\mult_{\rho}(F) \leq \frac{6}{7}(a+b+c+d).
\end{equation}
\item $F_0(x,y,z,w)$ is irreducible in $k[x,y,z,w]_6$.
\end{enumerate}
A sextic $F(x,y,z,w)$ satisfying these conditions will be called T-semistable. A sextic $F$ satisfying (resp., violating) \eqref{E:stab-cond} will be called $\rho$-semistable (resp., $\rho$-unstable).


We begin with a preparatory result:
\begin{lemma}\label{ABCD}
Suppose that, in some coordinates, a T-semistable sextic has equation
\begin{equation}\label{E:ABCD}
o(t)w^2+(\underbrace{p_1(x,y,t) z + p_3(x, y, t)}_{a_3(x,y,z,t)}) w + \underbrace{p_0(t) z^3 + p_2(x,y,t) z^2 + p_4(x,y,t) z+ p_6(x,y,t)}_{a_6(x,y,z,t)}.
\end{equation}
Then it must satisfy one of the following (up to scaling by a unit in $R$):
\begin{enumerate}
\item[(A)] $o(t)=1$. 
\item[(B)] $o(t)=t$, and either 
\begin{enumerate}[(i)]
\item $p_1(x,y,0)\neq 0$, or 
\item $\val(p_0(t))\leq 1$, or 
\item $p_2(x,y,0)\neq 0$.
\end{enumerate}
\item[(C)] $\val(o(t))\geq 2$, and $p_1(x,y,0)\neq 0$.
\item[(D)] $\val(o(t))= 2$, $p_1(x,y,0)=0$, $p_3(x,y,0)\neq 0$, and $p_0(t)=1$.
\end{enumerate}
and must have either $p_3(x,y,0)\neq 0$, or $p_4(x,y,0)\neq 0$, or $t^2\nmid p_6(x,y,t)$.

In particular, either $a_3(x,y,z,0)\neq 0$ or $\val(o(t))\leq 1$.
\end{lemma}

\begin{proof}
If $\val(o(t))\geq 1$, then stability with respect to $\rho=(1,1,1,2)$ ensures that either $p_1(x,y,0)\neq 0$, or $\val(p_0(t))\leq 1$, or $p_2(x,y,0)\neq 0$.

Assume that $\val (o(t))\geq 2$ and $p_1(x,y,0)=0$. Taking $\rho=(1,1,2,2)$,
we see that $p_3(x,y,0)\neq 0$. 
Taking $\rho=(1,1,1,1)$, we see that $p_0(0)\neq 0$. 
Taking $\rho=(1,1,1,0)$ shows that $\val (o(t))=2$. 

Finally, semistability with respect to $\rho=(0,0,1,1)$ ensures that either $p_3(x,y,0)\neq 0$, or $p_4(x,y,0)\neq 0$, or $t^2\nmid p_6(x,y,t)$.
\end{proof}

\begin{Lemma}\label{isolated-1}
Suppose $F$ is T-semistable, then $X$ has only isolated singularities.
\end{Lemma}
\begin{proof}
Suppose $X$, and hence $X_0$, is singular along a curve. We will show that in some coordinates (a component of) this curve 
is given by $L_1=L_2=0$, where $\{L_1, L_2\}\subset \{x,y,z,w\}$ (Cases I and II below). Since $X$ is singular along the locus $L_1=L_2=t=0$, we must have $F\in (L_1,L_2,t)^2$.
Then $F$ is destabilized by the one-parameter subgroup $\rho$ acting with weight $1$ on $L_i$'s and weight $0$ on the remaining coordinates,
leading to a contradiction.

Case I: $o(t)=1$.  After a change of variables, we can take $a_3(x,y,z,t)=0$. Then $a_6(x,y,z,0)$ is non-reduced.
It is not a square since $F_0$ is integral, so after a further change of coordinates we have $L_1^2 \mid a_6(x,y,z,0)$,
where either $L_1=x$ or $L_1=z$. Then $L_1=w=t=0$ is in the singular locus of $X$.

Case II: $a_3(x,y, z,0)\neq 0$ and $\val o(t)\geq 1$. We have $F_0= a_3(x,y, z,0)w +a_6(x,y,z,0)$. 
Since $a_3$ and $a_6$ are coprime by integrality, we see that the singular locus of $X_0$ must consist of 
the fibers of the projection map over the $0$-dimensional locus $a_3(x,y, z,0)=a_6(x,y, z,0)=0$ in $\PP_k(1,1,2)$. Up to a change of coordinates, we may take 
either $x=y=t=0$ or $y=z=t=0$ to be in the singular locus of $X_0$.

Case III: $a_3(x,y,z,0)=0$ and $o(t)=t$. 
For $F=tw^2+a_6(x,y,z,t)$ to have non-isolated singularities along $t=0$, we must have $a_6(x,y,z,0)$ to have non-isolated singularities as a sextic in $\PP_k(1,1,2)$.
Then $F_0=a_6(x,y,z,0)$ is non-reduced, contradicting the T-semistability. 


\end{proof}

We proceed to prove that the isolated singularities of $X$ are terminal by proving that the general elephant through every singular point has Du Val singularities.
Let $P \in X$ be a singular point. Then up to a change of coordinates, we have:
\begin{enumerate}
\item $P=[1:0:0:0]$, a Gorenstein point.
\item $P=[0:0:1:0]$, a 2-Gorenstein point
\item $P=[0:0:1:1]$, a Gorenstein point.
\item $P=[0:0:0:1]$, a 3-Gorenstein point. 
\end{enumerate}

We begin with:
\begin{lemma}\label{isol-el-1} For every choice of coordinates, let $E$ be the elephant given by the equation $y=cx$ (resp., $y=ctx$), where $c\in k$ is general. 
Then $E$ has isolated singularities (resp., isolated singularities away from $x=y=t=0$).
\end{lemma}

\begin{proof} The generic fiber of $E$ is smooth by Lemma \ref{elephant-K}.
It remains to show that $E$, given by the sextic equation $F(x,cx,z,w)$ (resp., $F(x,ctx,z,w)$) in $\PP_R(1_x,2_z,3_w)$ 
has isolated singularities in the central fiber (respectively, isolated singularities away from $x=t=0$). 
Suppose not, then every component of the one-dimensional singular locus of $E$ is a non-reduced component of $E_0$, and, 
by the degree considerations, 
has equations $L=t=0$, where $L\in \{x,z,w\}$.  

Consider first the case of $y=cx$. Then $F(x,cx,z,w)\in (L,t)^2$. If $L=x$, then $F(x,y,z,w)\in (x,y,t)^2$, and
is unstable with respect to $\rho=(1,1,0,0)$ by \eqref{E:stab-cond}.  If $L\in \{z,w\}$, then $F(x,y,z,w)\in (L,t)^2$ and is unstable with respect to $\rho=(0,0,1,0)$ or $\rho=(0,0,0,1)$.  

Consider now $y=ctx$. Then $F(x,ctx,z,w)\in (L,t)^2$, where $L\in \{z,w\}$ because we consider singularities away from $x=y=t=0$ in this case.
Then $F\in (y,L,t)^2$ and so is unstable with respect to $\rho=(0,1,1,0)$ or $\rho=(0,1,0,1)$ by \eqref{E:stab-cond}.
\end{proof}

\begin{Prop}\label{P:gorenstein-singularity}
Let $P = [1:0:0:0] \in X$ be a singular point. Let $E$ be the elephant through $P$ given by the equation $y=ctx$, where $c\in k$ is general.
Then $P\in E$ is Du Val. Consequently, $P\in X$ is cDV and hence terminal.
\end{Prop}
\begin{proof}
In the affine chart $x=1$, and local coordinates $y, z, w, t$, the equation of $X$ is $f(y,z,w,t):=F(1,y,z,w)$ and the equation of $E$ is $e(z,w,t):=f(ct, z, w, t)$.          

By Lemma \ref{isol-el-1}, $P$ is an isolated singularity of $E$. 
Since $F$ is semistable for $\rho = (0,1,1,1)$, we have \[\val e(tz,tw,t)=\val f(ct, tz, tw, t)=\val f(ty, tz, tw, t)=\mult_{\rho}(F)\leq 2.\] 
Thus $P$ is a double point of $E$,
and we can use Lemma \ref{duVal} to prove that $P\in E$ is Du Val. Let $o(t)$ be the coefficient of $w^2$ in $F$.


Case I: $o(t)=1$.  After a change of coordinates, we can write $f=w^2+a(y,z,t)$. If $a(y,z,t)$ has a quadratic term, then so does $a(ct,z,t)$
and so $P\in E$ is an $A$-singularity by Lemma \ref{duVal}(a). Hence we can assume that $$a=p_0(y,t)z^3+p_1(y,t)z^2+p_2(y,t)z+p_3(y,t),$$
where $\val p_i(ct,t) \geq i$.  By $\rho = (0,1,1,2)$-semistability, we must have $\val f(ty, tz, t^2w, t)=\mult_{\rho}(F)\leq 3$. 
It follows that $\val p_i(ct,t) = i$ for at least one $i$.  Consider the following three subcases:

i) If $p_0=1$, we can change coordinate $z$ to arrive at
$f=w^2+z^3+p_2(y,t)z+p_3(y,t)$. If $p_2(ct,t)z+p_3(ct,t)$ has a cubic term, then $P\in E$ is a $D$ singularity by Lemma \ref{duVal}(b). Suppose not.
Then stability with respect to $\rho=(0,1,2,3)$ ensures that either $\val p_3(ct,t) \leq 5$ or $\val p_2(ct,t) \leq 3$, giving a type $E$ singularity by Lemma \ref{duVal}(c). 

ii) If $t\mid p_0$, and either $\val p_1(ct,t)=1$ or $\val p_2(ct,t)=2$, then 
we have a type $D$ singularity by Lemma \ref{duVal}(b).

iii) If $t\mid p_0$, and $\val p_1(ct,t) \geq 2$ and $\val p_2(ct,t) \geq 3$, then necessarily $\val p_3(ct,t)=3$. In this case, semistability of $F$ with respect to $\rho = (0,1,0,1)$ ensures that $p_0=t$. 
It follows that $e=w^2+t^3 +tz^3+z^2 p_1(ct,t)+z p_2(ct,t)$, an $E_7$ singularity by Lemma \ref{duVal}(c).

Case II: $\val o(t)\geq 1$.   If the tangent cone of $f$ is a reduced quadric, then so is the tangent cone of $e$ and we have an $A$ singularity.  
Assume from now on that the tangent cone of $f$ is a non-reduced quadric. In particular, it cannot depend on $w$ and so lies in $(y, z,t)^2$.
In particular $xzw, x^2yw\notin F$.\footnote{We write $m\in F$ (resp., $m\notin F$) to indicate that a monomial $m$ has a nonzero (resp., zero) coefficient in $F$.} 
Semistability of $F$ with respect to $\rho=(0,1,1,0)$,
ensures $tw^2\in F$. Semistability of $F$ with respect to $\rho=(0,1,0,0)$,
ensures that either $z^2x^2 \in F$ or $z^3\in F$.
 If $z^2x^2 \in F$, then $e(z,w,t)=z^2+tw^2+\cdots$ is a $D$ singularity. 
If $z^2x^2\notin F$ and $z^3\in F$, then the tangent cone of $e$ must be $t^2$, and so $e(z,w,t)=t^2+tw^2+z^3+\cdots$ is an $E_6$ singularity. 
\end{proof}

\subsubsection{Singularities along $x=y=0$} We now analyze the singularities of $X$ along the line $x=y=0$ passing
through both cyclic quotient singularities of $\PP(1,1,2,3)$. We consider an elephant $E$ given by $y=cx$, where $c\in k$ is general.
By Lemma \ref{isol-el-1}, the singularities of $E$ are isolated. We prove that they are Du Val along $x=y=0$, thus establishing the terminality of $X$
along this line.   

\begin{Lemma}
Suppose $P = [0:0:0:1] \in X$ is a 
$3$-Gorenstein singularity. Let $E$ be the elephant through $P$ given by $y=cx$, where $c\in k$ is general. 
Then $P\in E$ is Du Val.  
Hence $P \in X$ is a terminal singularity. 
\end{Lemma}
\begin{proof}
In the coordinates $\alpha:=x^3, \beta:=z^3, \gamma:=xz$, and $t$ on the affine chart $w=1$, the equations of $E$ are 
\begin{equation}\label{ge-1/3}
\begin{aligned}
\alpha\beta &=\gamma^3, \\
0&=o(t) + p_1(t) \gamma+p_2(t) \alpha+ q_1(t)\beta+q_2(t)\gamma^2+q_3(t)\alpha\gamma+q_4(t)\alpha^2.
\end{aligned}
\end{equation}
We proceed to consider the possibilities enumerated in Lemma \ref{ABCD}, noting that only cases (B--D) are possible.

Case (B): $o(t)=t$. Then \eqref{ge-1/3} is a surface $A_2$-singularity.  (It is easy to see that $P\in X$ is a $\frac{1}{3}(1,1,2)$-point.)

Case (C): $\val_t(o(t)) \geq 2$ and $p_1(t)=1$. Then \eqref{ge-1/3} is a surface $A_n$-singularity.  

Case (D): $\val_t(o(t)) =2$ and $p_2(t)=q_1(t)=1$. Plugging in 
$\beta=-t^2-\alpha-p_1(t)\gamma-q_2(t)\gamma^2-q_3(t)\alpha\gamma-q_4(t)\beta^2$ into $\alpha\beta=\gamma^3$, we get a singularity
\[
\alpha t^2+\alpha^2+\gamma^3+\cdots=0,
\]
which is formally equivalent to $\alpha^2+\gamma^3+t^4=0$, an $E_6$-singularity.  (One can see that $P\in X$ is $\frac{1}{3}cD_4$ singularity according to the Mori classification scheme \cite{Mori},\cite[Theorem~6.5]{KSB}.)

\end{proof}

\begin{Lemma}
Suppose $P = [0:0:1:0] \in X$ is a 
$2$-Gorenstein singularity. Let $E$ be the elephant given by $y=cx$, where $c\in k$ is general. 
Then $P\in E$ is Du Val.  
Hence $P \in X$ is a terminal singularity.
\end{Lemma}

\begin{proof} Keep the notation of Lemma \ref{ABCD}, and set $p_i(t)=p_i(c,cx,t)/x^{i}$. 
In the coordinates $\alpha:=x^2, \beta:=w^2, \gamma:=xw$, and $t$, in the affine chart $z=1$, the equations of $E$ are 
\begin{equation}\label{ge-1/2}
\begin{aligned}
\alpha \beta &=\gamma^2, \\
0&= o(t) \beta+p_1(t) \gamma+p_3(t)\alpha\gamma+ p_0(t) +p_2(t)\alpha+p_4(t)\alpha^2+p_6(t)\alpha^3 &=0.
\end{aligned}
\end{equation}
Only cases (A--C) of Lemma \ref{ABCD} are possible. We can also assume that $p_0(t)=t^k$, where $k\geq 2$,
since $\val_t(p_0(t))=1$ implies that $P\in E$ is an $A_1$-singularity. 

Case (A): $o(t)=1$. Eliminating $\beta$, we get a surface singularity:
\[ 
\gamma^2+ t^k \alpha +p_2(t)\alpha^2+p_4(t)\alpha^3+p_6(t)\alpha^4=0.
\]
If $\val_t(p_2(t))=0$, then we have an $A$-singularity. If $k=2$ or $\val_t(p_2(t))=1$, then we have a $D$-singularity. 
Suppose $k\geq 3$ and $\val_t(p_2(t))\geq 2$. Semistability of $F$ with respect to $\rho=(1,1,0,2)$, implies 
that $\val_t(p_4(t))=0$.  
Now if $k=3$, we get an $E_7$-singularity.  If $k\geq 4$, we destabilize $F$ using $\rho=(1,1,1,3)$.

Case (B): $o(t)=t$. It $p_1(t)=1$, eliminating $\gamma$, we get a surface singularity:
\[
\alpha\beta=(t\beta+ t^k +p_3(t)\alpha+p_4(t)\alpha^2+p_6(t)\alpha^3)^2+\cdots, 
\]
which is of type $A$.
If $p_1(t)=0$, then $p_3(t)=1$, and, eliminating $\alpha$, we get 
\[
(t \beta+t^k +\cdots) \beta=\gamma^2 
\]
which is a $D$-singularity.  

Case (C): $p_1(t)=1$. This is identical to the case $p_1(t)=1$ in Case (B) above.
\end{proof}

\begin{lemma}\label{P:gorenstein-singularity-2}
Let $P = [0,0,1,1] \in X$ be a Gorenstein singularity. Then $P\in E$ is Du Val.  
Hence $P \in X$ is a terminal singularity. 
\end{lemma}
\begin{proof}
In the coordinates $\alpha:=x^3, \beta:=z^3-1, \gamma:=xz$ on the affine chart $w=1$, the equations of $E$ are
\begin{equation}\label{ge-gor}
\begin{aligned}
\gamma^3 &=\alpha(1+\beta) , \\
0&=(o(t)+q_1(t)) + p_1(t) \gamma+p_2(t) \alpha+ q_1(t)\beta+q_2(t)\gamma^2+q_3(t)\alpha\gamma+q_4(t)\alpha^2.
\end{aligned}
\end{equation}

For $P\in E$ to be singular, we must have $\val_t(p_1(t)), \val_t(q_1(t))\geq 1$ and $\val_t(o(t)+q_1(t))\geq 2$. However, 
if $\val_t(q_1(t)) \geq 2$ and $\val_t(o(t))\geq 2$, then $F$ is unstable for $\rho=(1,1,0,0)$. We conclude that $\val_t(q_1(t))=1$
and $o(t)=-q_1(t)$. Then $E$ is a hypersurface singularity in $\spec k[\beta,\gamma,t]$ with the tangent cone $t\beta=0$, hence of Type A.

\end{proof}

This concludes the proof of Theorem \ref{mainthm2} in Case (1) of Lemma \ref{L:integral}.

\subsection{Central fiber in $\MP_k(1,2,9,9)$} \label{Step-3}

Suppose we are in Case (2) of Lemma \ref{L:integral}, so that $H_0=y^3-xz$. Then in some coordinates we have:
\begin{align*}
H&= y^3 - xz + t^n L(w,s),  \ n\geq 1, \ L(w,s)\in R[w,s]_3, \\
F&= q(w,s) + w b_3(x,y,z) + s a_3(x,y,z) + f_6(x,y,z) + t P(x,y,z,w,s),  \quad q(w,s)\neq 0.
\end{align*}
Here, and throughout the rest of this subsection, we use lowercase letters to denote elements of $k[x,y,z,w,s]$ and uppercase letters to denote elements of $R[x,y,z,w,s]$.

We first treat the easier case of $\ON{rk} q(w,s) = 2$ in the next lemma, and then consider the case $\ON{rk} q(w,s) = 1$ in \S\ref{S:rank-1}.
\begin{Lemma}\label{isolated-2}
If $\ON{rk} q(w,s) = 2$, then $X$ has isolated singularities, and for every singular point $P\in X$,  the general elephant through $P$ has Du Val Type A singularity at $P$. 
Thus $X$ is terminal.
\end{Lemma}
\begin{proof}
We can change coordinates so that
\begin{equation}\label{rk2-st}
\begin{aligned}
H&= y^3 - xz + t^n L(w, s) \\
F &= ws + f_6(x,y,z) + t P(x,y,z).
\end{aligned}
\end{equation}
By Lemma  \ref{L:integral},
the central fiber $X_0$ is an irreducible degree $18$ hypersurface
$ws+f_6(\alpha^3,\alpha\beta,\beta^3)=0$ in $\PP_k(1_{\alpha},2_{\beta},9_w,9_s)$. Evidently, it 
has isolated singularities.  Hence $X$ has isolated singularities.
We proceed to show that the general elephant through a singularity $P\in X$ is Du Val.

Up to a change of coordinates, preserving the form \eqref{rk2-st}, 
we have $$P\in \{[1:0:0:0:0], [0:0:1:0:0], [0:0:1:0:1], [0:0:0:0:1]\}.$$ 
In each case we take the elephant $E$ given by $y=ctx$, or $y=cx$, with $c\in k$ general. The generic fiber of such $E$ is smooth by Lemma
\ref{elephant-K}
and has reduced central fiber $E_0$ because already the central fiber of the elephant $y=0$ is reduced.

We provide details only for two cases, with the rest being analogous (and easier than corresponding cases of $\ON{rk} q(w,s)=1$).
\begin{enumerate}
\item $P=[1:0:0:0:0]$. We take $E: y=ctx$. 
Then in the affine chart $x=1$, we have $z=c^3t^3+t^n L(w, s)$ and 
$E$ has an isolated singularity \[
ws+f_6(1,ct, c^3t^3+t^n L(w, s))+tP(1,ct, c^3t^3+t^n L(w, s))=0,
\]
with a reduced quadric tangent cone $ws=0$, hence of Type A.

\item $P=[0:0:0:0:1]$. We take $E: y=cx$. 
 In the affine chart $s=1$, and in local coordinates $\alpha=x^3, \beta=z^3, \gamma=xz$, and $w, t$, the equations of $E$ are
\begin{align*}
\alpha\beta&=\gamma^3, \\
\gamma&=t^n L(w,1)+c^3 \alpha, \\
w&=G(\alpha,\beta,\gamma,t).
\end{align*}
This is again a Type A singularity.  
\end{enumerate}
\end{proof}

\subsubsection{The case $\ON{rk} q(w,s) = 1$}\label{S:rank-1}

\begin{lemma}\label{AB} 
Suppose $(F, H)$ is T-semistable with $H_0=xz-y^3$ and $F_0(0,0,0,w,s)$ a rank $1$ quadric. 
Then after a change of coordinates we have:
\begin{equation}\label{standard-rank-1}
\begin{aligned}
F &=w^2+s(ts+a_3(x,y,z))+f_6(x,y,z)+tB_6(x,y,z), \\
H&=y^3-xz+ t^n L(w,s), \ n\geq 1,
\end{aligned}
\end{equation}
where
\begin{enumerate}
\item
Either $a_3\notin (y^3-xz)$ or $f_6\notin (y^3-xz)$.
\item $ts+a_3(x,y,z)\notin (H)$.
\item Either $ts+a_3(x,y,z)\notin (x,y)^3 +(H)$, or $f_6$ has $z^3$ term.
\item Either $ts+a_3(x,y,z)\notin (y,z)^2 +(H)$, or $f_6\notin (y,z)^2$, or $B_6$ has $x^6$ term. 
\end{enumerate}
\end{lemma}
\begin{remark} If $(t^nL(w,s))\neq (ts)$, Conditions (2--4) are vacuous. In this case, the standard form \eqref{standard-rank-1} uniquely determines $F$. If $t^nL(w,s)= ts$, 
$F$ is determined only up to a multiple of $H=y^3-xz+ts$, which explains the phrasing of Conditions (2--4).
\end{remark}
\begin{proof}
Write the equations of $X$ in the form
\begin{align*}
&F = w^2 + s a_3(x,y,z) + f_6(x,y,z) + t B_6(x,y,z,s) = 0,\\
&H = y^3 - xz + t A_3(x,y,z,w,s) = 0.
\end{align*}
Semistability with respect to $\rho=(1,1,1,1,0)$ 
implies that either $\mult_{\rho}(F)\leq 1$ or $\mult_{\rho}(H)\leq 1$.
The only possible terms of $\rho$-weight $1$ are $ts$ in $H$ or $ts^2$ in $F$. In either case, taking $F=F+s\lambda H$, for an appropriate $\lambda\in k$, 
ensures that $F$ has $ts^2$ term. We can then eliminate $s$ from $B_6$ to arrive at $B_6\in R[x,y,z]$.
By changing $x,y,z$, we can eliminate $x,y,z$ from $A_3$, so that $ t A_3(x,y,z,w,s)=t^n L(w,s)$ for $L(w,s)\in R[w,s]$.

Condition (1) follows from the integrality of $X_0$. Condition (2) follows from semistability with respect to $(1,1,2,3,2)$, (3) follows from semistability with respect to $\rho=(1,1,1,2,1)$,
and (4) 
follows from semistability with respect to $\rho=(0,1,1,1,0)$.
\end{proof}

\begin{Lemma}\label{isolated-3}
Suppose $(F,H)$ is T-semistable, then 
$X$ has only isolated singularities.
\end{Lemma}
\begin{proof}
Suppose $X$ is singular along a curve $C\subset X_0$.
By the Jacobian criterion, we have \[
C \subset (w = y^3-xz = a_3(x,y,z) = f_6(x,y,z)= t = 0),
\] a cone over a finite set of points in $\PP(1_x,1_y,2_z)$.
After a change of coordinates $x,y,z$ preserving $y^3-xz$, we can assume $C$ is either a line $t = w = z = y = 0$ or a line $t = w = x = y = 0$.  

Suppose $X$ is singular along $t = w = z = y = 0$. Then we can change $F$ by a multiple of $H$ to ensure $F\in (t,w,z,y)^2$. 
For $\rho = (0,1,1,1,0)$, we have $\mult_{\rho}(H)=1$ and $\mult_{\rho}(F)=2$, making $(F,H)$ $\rho$-unstable. 

Suppose $X$ is singular along $t = w = x = y = 0$. Then we can change $F$ by a multiple of $H$ to ensure $F\in (t,w,x,y)^2$. 
For $\rho = (1,1,0,1,0)$, we have $\mult_{\rho}(H)=1$ and $\mult_{\rho}(F)=2$, making $(F,H)$ $\rho$-unstable. \end{proof}

We proceed to prove that the isolated singularities of $X$ are terminal.  
Let $P \in X$ be a singular point. Up to a change of variables preserving the standard form \eqref{standard-rank-1}
\begin{enumerate}
\item $P=[1:0:0:0:0]$, a Gorenstein point.
\item $P=[0:0:1:0:0]$, a 2-Gorenstein point
\item $P=[0:0:1:0:1]$, a Gorenstein point.
\item $P=[0:0:0:0:1]$, a 3-Gorenstein point. 
\end{enumerate}

\begin{lemma}\label{isol-el-2} For every choice of coordinates preserving the standard form \eqref{standard-rank-1}, let $E$ be the elephant given by the equation $y=cx$ (resp., $y=ctx$), where $c\in k$ is general. 
Then $E$ has isolated singularities (resp., isolated singularities away from $x=y=t=0$).
\end{lemma}

\begin{proof} The generic fiber of $E$ is smooth by Lemma \ref{elephant-K}.
It remains to show that $E$ has isolated singularities along the central fiber (respectively, isolated singularities away from $x=y=t=0$). Suppose
not, and let $C$ be a one-dimensional component of the singular locus of $E$ in $t=0$.
By the Jacobian criterion, we have $C \subset (w = c^3x^3-xz = a_3(x,cx,z) = f_6(x,cx,z)= t = 0)$,
or $C \subset (w = c^3t^3x^3-xz = a_3(x,ctx,z) = f_6(x,ctx,z)= t = 0)$.

Since $y^3-xz=a_3(x,y,z)=f_6(x,y,z)=0$ is a finite set in $\PP(1_x,1_y,2_z)$, a general section $y=cx$ (resp., $y=ctx$) will avoid it unless this finite set contains 
a point with $y=0$. The possibilities are then $[x:y:z]=[0:0:1]$ if $y=cx$ (resp., $[x:y:z]=[0:0:1], [1:0:0]$ if $y=ctx$). 

Consider first the case of $E: y=cx$. Its singularities lie along the line $x=y=w=0$. If $f_6$ has $z^3$ term, then $w=x=y=0$ implies $z=0$, hence an isolated singularity.
If $f_6$ has no $z^3$ term, then $s(st+a_3(x,cx,z))+\lambda sH$ has either $st$ or $sxz$ term for any $\lambda \in k$ (Condition 3). In this case, $(\partial F/\partial x)(\partial H/\partial t)- (\partial F/\partial t) (\partial H/\partial x)$
is generically non-zero along $w=x=y=t=0$, and so the claim follows.

Consider now the case of $E: y=ctx$. We need to show that its singularities are isolated along the line $w=y=z=0$.  This follows from Condition 4.
\end{proof}
\begin{Lemma}
Let $P=[0:0:0:0:1] \in X$ be a $3$-Gorenstein singularity. Then the generic elephant $E$ through $P$ has Du Val singularities.
\end{Lemma}
\begin{proof}
Take $E$ to be given by $y=cx$ where $c\in k$ is general, 
In the affine chart $s=1$, we work in the local coordinates $\alpha:=x^3, \beta:=x(z-c^3x^2), \gamma:=(z-c^3x^2)^3$, and $w, t$.
The equations of $E$ in $\spec k[\alpha,\beta,\gamma,w,t]$ are:
\begin{align*}
\alpha\gamma &=\beta^3, \\
\beta&= t^n L(w,1), \\
0&=w^2+(t+a(\alpha, \beta))+f(\alpha,\beta,\gamma)+t B(\alpha,\beta,\gamma). 
\end{align*}
If $t+a(\alpha, t^n L(w,1))$ has nonzero $t$ term, we proceed working formally locally and eliminate $\beta$ and $t$
to arrive at 
a hypersurface $$\alpha\gamma=(w^{2n}L(w,1) +\cdots )^3$$ in $k[[\alpha,\gamma,w]]$. Hence $E$ has Type A singularity at $P$.

Otherwise, we necessarily have $t^nL(w,1)=t$ (up to a nonzero scalar) by Condition 3, $t+a(\alpha, t^nL(w,1))$ is a non-zero multiple of $\alpha$, and $f$ must have a $\gamma$ term.  
Eliminating $t$ and $\alpha$, we arrive at
\[
(w^2+\gamma+(\text{higher degree terms})) \gamma=\beta^3,
\]
an $E_6$ singularity.

\end{proof}

\begin{Lemma}
Let $P=[0:0:1:0:0] \in X$ be a $2$-Gorenstein singularity. Then $P \in E$ is Du Val for a general elephant $E$.
\end{Lemma}
\begin{proof}
We take $E$ to be given by $y=cx$, where $c\in k$ is general, and change coordinate $z=z+c^3x^2$.
Then the equation of $E$ are 
\begin{align*}
y &=c x, \\
xz &=t^n L(w,s), \\
0 & = w^2 + s(ts+a_3(x,y,z)) + f_6(x,y,z) + t B_6 (x,y,z,s).
\end{align*}
Since $f_6$ has no $z^3$ term, we must have $ts+a_3(x,y,z)\notin k[x,y] \pmod{xz-t^n L(w,s)}$. 

We work in the affine chart $z = 1$, where the local variables will be
$\alpha = w^2$, $\beta = s^2$, $\gamma = ws$, and $t$. 
Eliminating $x$ using $x=t^nL(w,s)$, we arrive at 
\begin{align*}
\gamma^2 &= \alpha \beta, \\
0 & = \alpha + (c' t \beta+c'' t \gamma+(h.o.t.)) + f(\alpha,\beta,\gamma) + t B(\alpha, \beta, \gamma),
\end{align*}
where $c'\neq 0$.

If $B_6$ has a $z^3$ term (resp., $B$ has a constant term), then we get $A_1$-singularity.
Otherwise, $f(\alpha,\beta,\gamma) + t B(\alpha, \beta, \gamma)$ is at least quadratic. 
Eliminating $\alpha$, we arrive at
\[
\gamma^2-(c't\beta^2-c''t\beta \gamma)+ (\text{higher order terms})=0
\]
in $\spec k[[\beta,\gamma,t]]$. Since $c'\neq 0$, this is a Type D singularity.
\end{proof}

\begin{Lemma} \label{Gor10000}
Suppose $P = [1,0,0,0,0]\in X$ is a singular point. A general elephant $E$ of the form $y=ctx$, $c\in k$, has Type A or D singularities at $P$.
\end{Lemma}
\begin{proof} We change variables $z=z+c^3t^3x^2$, and work in the affine chart $x=1$, so that $z=t^nL(w,s)$. 
The equation of $E$ in $\spec k[[s,w,t]]$ then becomes
\begin{align*}
w^2 +s(ts+a_3(1,ct,t^nL(w,s))+f_6(1,ct,t^nL(w,s)) + tB(1,ct,t^nL(w,s))=0.
\end{align*}
where either $ts+a_3(1,ct,t^nL(w,s))$ has $ts$ term, or $t^nL(w,s)=ts$ and $f_6$ has $x^4z=ts$ term (Condition 4).
Hence $P$ is either Type D or Type A singularity.
\end{proof}

\begin{Lemma}
Suppose $P = [0,0,1,0,1]$ is a singular point of $X$. Then $P\in E$ is an $A_1$-singularity. 
\end{Lemma}
\begin{proof}
Let $q(t)$ be the coefficient of $F$ in Lemma \ref{AB}. Note that $\val_t(q(t))\geq 1$.
In the affine chart $s=1$, in local coordinates $\alpha:=x^3, \beta:=z^3-1,  \gamma:=xz$, and $w, t$,  the equations of $E$ are then
\begin{equation}
\begin{aligned}
\gamma^3&=\alpha(\beta+1), \\
\gamma&=c^3 \alpha+t^n L(w,1) , \\
0&=w^2 + (t+a(\alpha, \gamma))+ q(t)(\beta+1)+Q(\alpha,\gamma),   
\end{aligned}
\end{equation}
where $a(\alpha,\gamma)$ is linear and $Q(\alpha, \gamma)\in R[\alpha,\gamma]$ quadratic in $\alpha, \gamma$. If $\val(q(t))\geq 2$, then the fact that $P\in E$ is singular implies that $E$ is singular along
the whole line $\alpha=\gamma=w=t=0$, which is a contradiction. 
If $\val(q(t))=1$, then $E$ has equation 
\[
w^2+t\beta+\cdots=0,
\] which is $A_1$-singularity. (When $(t^nL(w,1))\neq (ts)$, one can see that $\val(q(t))=1$ directly from the condition that $P\in E$ is singular.) 
\end{proof}

This concludes the proof of Theorem \ref{mainthm2} in Case (2) of Lemma \ref{L:integral}.

\section{Questions and conjectures}\label{QandC}

In this section, we state some questions and conjectures arising from our definition of stability for degree $1$ and $2$ del Pezzo fibrations, and 
concerning the specific parameter spaces of such del Pezzos appearing in this article.

\subsection{Optimality of the models}

In defining stability for a given fibration, even for a fixed $G$ and $M$, there is a choice of a $G$-invariant divisor (or, equivalently, a $G$-linearized line bundle $\cL$
with an invariant section).
For a given del Pezzo over $K$, many choices of $\cL$ may result in stable  models over $R$ with a terminal total space and integral fibers. 
It is possible that not all of these models are isomorphic, in which case it is natural to ask ``What is the best choice?''
In particular, we can ask the following question.
\begin{Question}
Do the line bundle $\cL^{ter}:=\cO(\fD^{ter})$ defined in Proposition \ref{P:T-semistability} and the CM line bundle $\cL^{CM}$ give the same notion of stability?
\end{Question}

From the birational point of view, we can consider optimality in the following manner.

\begin{Def}
Let $\pi\colon X \to Z$ be a morphism from a terminal $\MQ$-factorial variety to a curve.
Suppose $-K_X$ is $\pi$-ample.
Let $X_0$ be the fiber over $0 \in Z$ and suppose that $X_0$ is irreducible.
We define \emph{the movable canonical threshold} of $X$ along $X_0$ to be
\[
\mct_X(X_0) = \ON{sup} \left\{ \lambda \mid \left(X,\frac{\lambda}{n}\CM\right) \text{~is~canonical~along~} X_0, \text{~where~} \CM \subset\big| -nK_X + lF \big|, n,l \in \mathbb Z\right\}.
\]
\end{Def}

\begin{Remark}
The movable canonical threshold helps in measuring the singularities of $X$ along $X_0$ and, indirectly, of $X_0$ itself.
Indeed, by inversion of adjunction we have $\mct_X(X_0) \gem \lct(X_0)$.
\end{Remark}

\begin{Th}[{\cite[Theorem~1.5]{Ch-Cubics}}]
Let $Z$ be a smooth curve. Suppose that there is a commutative diagram
\begin{displaymath}
\xymatrix
{ 
	X\ar@{-->}[r]^\rho \ar[d]^\pi & \ar[d]^{\pi_Y} Y \\
	Z\ar@{=}[r] & Z
}
\end{displaymath}
such that $\pi$ and $\pi_Y$ are flat morphisms, and $\rho$ is a birational map that induces an isomorphism
\begin{align*}
\rho\big\vert_{X\setminus X_0}\colon X\setminus X_0\to {Y}\setminus{Y_0}
\end{align*}
where $X_0$ and $Y_0$ are scheme fibers of $\pi$ and $\pi_Y$ over a point $0\in Z$, respectively. 
Suppose that the varieties $X$ and $Y$ have terminal and $\QQ$-factorial singularities, the divisors $-K_X$ and $-K_Y$ are $\pi$-ample and $\pi_Y$-ample respectively, the fibers $X_0$ and $Y_0$ are irreducible, and $\mct_X(X_0) + \mct_Y(Y_0) > 1$. 
Then $\rho$ is an isomorphism.
\end{Th}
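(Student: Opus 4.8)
The plan is to argue by contradiction. Suppose that $\rho$ is not an isomorphism; I will show that $\mct_X(X_0) + \mct_Y(Y_0) \lem 1$, contrary to hypothesis. First I would resolve $\rho$ by a smooth variety $W$ equipped with projective birational morphisms $p\colon W \to X$ and $q\colon W \to Y$ with $\rho = q \circ p^{-1}$, both living over $Z$. Since $\rho\vert_{X \setminus X_0}$ is an isomorphism onto $Y \setminus Y_0$, every $p$- and $q$-exceptional divisor has center contained in $X_0$, respectively $Y_0$, and the two spaces share a common strict transform $D_0 \subset W$ of $X_0$ and $Y_0$. The structural input I would isolate at the outset is that the fiber class is pulled back identically: writing $F = \pi^*[0] = \pi_Y^*[0]$, one has $p^*F = q^*F =: F_W$, because $F$ comes from the base curve $Z$ and $p,q$ are morphisms over $Z$. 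This identity is what lets the two thresholds be compared on the same footing.

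Next I would invoke the relative Noether--Fano inequality. Pushforward by $\rho$ is a bijection between movable linear systems on $X$ and on $Y$: given $\CM_X \subset |-nK_X + aF|$ with no fixed components, its strict transform $\CM_Y := \rho_*\CM_X$ is a movable system on $Y$ with $\CM_Y \subset |-nK_Y + bF|$ for some $b$, the integer $n$ being preserved because $\rho$ restricts to an isomorphism of the generic fibers and $-K$ is $\pi$-ample of relative Picard number one. On $W$, using $K_X + \frac1n\CM_X \sim_{\MQ}\frac{a}{n}F$, $K_Y + \frac1n\CM_Y \sim_{\MQ}\frac{b}{n}F$, and $p^*F = q^*F$, a direct comparison of the two expressions for $K_W + \frac1n\CM_W$ (with $\CM_W$ the common strict transform) yields, for every divisor $E$ over $W$,
\[
\Bigl(a(E;X) - \tfrac1n\mult_E\CM_X\Bigr) - \Bigl(a(E;Y) - \tfrac1n\mult_E\CM_Y\Bigr) = \tfrac{b-a}{n}\,\mult_E F_W,
\]
relating the discrepancies of the $\rho$-related pairs $(X, \frac1n\CM_X)$ and $(Y, \frac1n\CM_Y)$. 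The relative Noether--Fano inequality then asserts that, because $\rho$ is not an isomorphism, no $\rho$-related pair can have both $(X, \frac1n\CM_X)$ canonical along $X_0$ and $(Y, \frac1n\CM_Y)$ canonical along $Y_0$; more precisely, the central-fiber canonicity thresholds $c_X(\CM_X)$ and $c_Y(\CM_Y)$ satisfy $c_X(\CM_X) + c_Y(\CM_Y) \lem 1$.

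Finally I would pass to the suprema. Since $\rho_*$ is a bijection on movable systems, taking the supremum over $\CM_X$ turns the two left-hand quantities into $\mct_X(X_0)$ and $\mct_Y(Y_0)$, and the previous step is meant to upgrade the pointwise inequality to $\mct_X(X_0) + \mct_Y(Y_0) \lem 1$, the desired contradiction. The hard part will be exactly this coupling: the relative Noether--Fano inequality controls $c_X(\CM_X) + c_Y(\rho_*\CM_X)$ only for a single $\rho$-related pair, whereas the statement concerns the two \emph{independent} suprema $\mct_X(X_0)$ and $\mct_Y(Y_0)$, and a pointwise bound does not in general bound the sum of suprema. Bridging this gap is the heart of the matter, and I expect it to require two ingredients: showing that the maximal center of the Noether--Fano pair is supported in the fibers $X_0,Y_0$ (so that the identity $p^*F=q^*F$ genuinely applies there), and a uniformity statement to the effect that a general movable system in $|-nK+lF|$ already computes the movable canonical threshold along the central fiber, so that $c_X$ and $c_Y$ are insensitive to the choice of system and the pointwise inequality becomes an inequality of suprema. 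Establishing the relative Noether--Fano inequality itself, by running a $K_X + \frac1n\CM_X$-MMP over $Z$ and tracking that a non-isomorphism forces a non-canonical $\rho$-related pair, is the other technical core of the argument.
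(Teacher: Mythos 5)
This statement is imported: the paper quotes it from \cite[Theorem~1.5]{Ch-Cubics} in the section of questions and conjectures and gives no proof of it, so there is no internal argument to compare yours against. Judged on its own terms, your outline follows the right template (common resolution $W\to X$, $W\to Y$ over $Z$, the identity $p^*F=q^*F$, comparison of discrepancies of $\rho$-related movable pairs), which is indeed how Corti, Grinenko and Cheltsov prove results of this kind. But the step you defer --- the ``relative Noether--Fano inequality'' $c_X(\CM_X)+c_Y(\CM_Y)\le 1$ for a $\rho$-related pair --- is the entire content of the theorem, and as you state it, it is not correct: for a single $\rho$-related pair one of the two systems may be taken base-point free (e.g.\ $\CM_Y$ very ample and general, $\CM_X=\rho^{-1}_*\CM_Y$), in which case $c_Y(\CM_Y)=+\infty$ and the asserted inequality fails outright. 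The working mechanism couples \emph{two} systems: one considers $\Delta_X=\frac{\lambda}{n}\CM_X+\frac{\mu}{m}\,\rho^{-1}_*\CM_Y$ and $\Delta_Y=\frac{\lambda}{n}\,\rho_*\CM_X+\frac{\mu}{m}\CM_Y$ with $\CM_X,\CM_Y$ free and general. Canonicity of $(X,\Delta_X)$ along $X_0$ then requires only $\mu<\mct_X(X_0)$, canonicity of $(Y,\Delta_Y)$ along $Y_0$ only $\lambda<\mct_Y(Y_0)$, while $K+\Delta$ becomes relatively anti-ample as soon as $\lambda+\mu>1$ --- a configuration available precisely under the hypothesis $\mct_X(X_0)+\mct_Y(Y_0)>1$; the contradiction with $\rho$ not being an isomorphism then comes from the negativity lemma applied on $W$ to $p^*(K_X+\Delta_X)-q^*(K_Y+\Delta_Y)$, which is supported on the central fibre of $W\to Z$. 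The same negativity/Zariski-type argument is also what upgrades your displayed relation, which a priori is only a $\QQ$-linear equivalence of divisors supported over $0\in Z$, to the coefficient-wise identity you wrote; as stated it is unjustified. One further point of care you omit: since only $X_0$ and $Y_0$ are assumed irreducible, one must rule out (or handle) the possibility that $\rho$ contracts $X_0$ itself, i.e.\ that the strict transforms of $X_0$ and $Y_0$ on $W$ do not coincide.

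Separately, the difficulty you single out as ``the heart of the matter'' --- passing from a pointwise bound to a bound on the two suprema --- is not actually there. For the theorem and the accompanying Remark to be non-vacuous, the definition of $\mct_X(X_0)$ must be read as the supremum of those $\lambda$ for which $(X,\frac{\lambda}{n}\CM)$ is canonical along $X_0$ for \emph{every} movable $\CM\subset|-nK_X+lF|$, i.e.\ $\mct_X(X_0)=\inf_{\CM}c_X(\CM)$; under the existential reading a free system makes the quantity $+\infty$. Consequently $\mct_X(X_0)\le c_X(\CM)$ for whichever system the construction produces, and a bound for one well-chosen $\rho$-related configuration immediately bounds $\mct_X(X_0)+\mct_Y(Y_0)$; no uniformity or ``general system computes the threshold'' lemma is needed. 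So the genuine gap is not where you place it: it is the unproved, and in your formulation misstated, Noether--Fano step, whose correct two-system formulation and negativity-lemma proof constitute the whole theorem.
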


This theorem suggests a relation between (semi)stability and movable canonical threshold:

\begin{Conj} For $1\leq d\leq 3$, there exists a notion of $\fD$-stability for degree $d$ del Pezzo fibrations such that
every semistable model $\pi\colon X \to Z$ satisfies $\mct_X(X_t) \geq \frac{1}{2}$ for all $t \in Z$.
\end{Conj}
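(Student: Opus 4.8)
The plan is to leverage the explicit stability analysis of Sections~\ref{S:del-pezzo}--\ref{Sing-and-stab}, translating the defining HMK-index inequalities directly into discrepancy estimates. The quantity $\mct_X(X_0)$ is governed by the divisorial (and more generally quasi-monomial) valuations $v$ over $X$ for which a general member of a movable system $\CM\subset|-nK_X+lF|$ attains large multiplicity relative to the discrepancy $a(v;X)$. For the del Pezzo fibrations at hand one has $-K_X=\cO_\PP(1)\vert_X$ by Theorem~\ref{mainthm1}, so that, after setting $F$ equal to the fiber class, both $v(\CM)$ and $a(v;X)$ can be computed in terms of the weights $(w_1,\dots,w_n)$ of a one-parameter subgroup $\rho$ and of the $\rho$-multiplicities of the defining equations. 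First I would establish that, for the terminal singularity types enumerated in Lemma~\ref{Key-Sing}, Corollary~\ref{cDV}, and Theorems~\ref{3-Gor-class}--\ref{2-Gor-class}, the worst valuations computing $\mct_X(X_0)$ are quasi-monomial in a system of quasihomogeneous coordinates, thereby reducing the infimum to one-parameter subgroups $\rho$ of $G$.

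Having made this reduction, the second step is to express canonicity of $\left(X,\tfrac{\lambda}{n}\CM\right)$ along the monomial valuation $v_\rho$ as the inequality
\[
\frac{\lambda}{n}\,v_\rho(\CM)\leq a(v_\rho;X),
\]
and to bound the left-hand side using $v_\rho(\CM)\leq n\,\mult_\rho(F)+\cdots$ together with the numerical formulae~\eqref{E:HMK-degree-1} and~\eqref{E:HMK-degree-1-bal}. The point is that $a(v_\rho;X)$ is itself a positive combination of the $w_i$ minus the multiplicities of $F$ and $H$, so that semistability, which asserts $\mu_\rho^{\cL}(F,H)\leq 0$ for every $M$-adapted $\rho$, supplies exactly the inequality needed to force $\lambda\geq\tfrac12$. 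The freedom in the statement, namely that one need only exhibit \emph{some} notion of $\fD$-stability, is what makes this feasible: one is permitted to tune the boundary coefficients $(a_0,\dots,a_{s+1})$ within the effective cone of Theorem~\ref{T:space-stability}(A) so that the resulting HMK inequality becomes tight precisely at the threshold $\lambda=\tfrac12$.

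The main obstacle will be the reduction in the first step: a priori $\mct_X(X_0)$ is an infimum over all movable linear systems and all divisors over $X$, and there is no formal reason the extremal valuation should be monomial in the ambient coordinates. I would handle this by a convexity argument, degenerating a general member of $\CM$ to its $\rho$-initial form and invoking lower semicontinuity of log canonical thresholds to replace an arbitrary $\CM$ by a torus-invariant one without raising $\mct_X(X_0)$. A secondary difficulty is that, unlike the degrees $1$ and $2$ settled here, the degree $3$ case uses a different ambient space and a distinct list of terminal singularities, so the bookkeeping of Section~\ref{S:del-pezzo} must be redone; moreover one must check that the bound survives the perturbation $\fD^{ter}=\fD^{bal}+\epsilon\delta_1$ (and its degree $3$ analogue) that guarantees integral fibers. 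Finally, the counterexamples of Okada mentioned in the acknowledgements suggest that $\tfrac12$ is optimal and that the choice of $\fD$ is delicate, so verifying optimality of the chosen stability condition would complete the picture and connect it, via the rigidity criterion of \cite{Ch-Cubics}, to uniqueness of the semistable model.
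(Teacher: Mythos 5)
First, a point of orientation: the statement you are proving is stated in the paper only as a \emph{conjecture} in Section \ref{QandC}; the authors give no proof of it, so there is nothing to compare your argument against except the conjecture's plausibility. Your text is accordingly best read as a research plan, and as such it contains genuine gaps that prevent it from being a proof. The most serious one is the reduction of $\mct_X(X_0)$ to one-parameter subgroups. The movable canonical threshold is an infimum over \emph{all} divisorial valuations over $X$ and all movable systems $\CM\subset|-nK_X+lF|$, and there is no established mechanism by which the extremal valuation is quasi-monomial in the ambient quasihomogeneous coordinates; your proposed fix (degenerating a general member of $\CM$ to its $\rho$-initial form and invoking semicontinuity of log canonical thresholds) runs in the wrong direction for this purpose, since semicontinuity lets you bound the lct of the special member from above by that of the general member, whereas you need to show that passing to a torus-invariant system does not \emph{raise} the threshold while simultaneously keeping the valuation monomial. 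Even at a fixed terminal point of the lists in Theorems \ref{3-Gor-class} and \ref{2-Gor-class}, the valuations computing canonical thresholds of pairs $(X,\frac{\lambda}{n}\CM)$ need not be toric in the coordinates in which the singularity is normalized.

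The second gap is the passage from semistability to the discrepancy inequality. The HMK-index inequalities $\mu_\rho^{\cL}(F,H)\leq 0$ control $\mult_\rho(F)$ and $\mult_\rho(H)$, i.e.\ the defining equations of the total space $X$, whereas the inequality $\frac{\lambda}{n}v_\rho(\CM)\leq a(v_\rho;X)$ involves the multiplicity of a general member of an arbitrary movable system on $X$. You assert a bound $v_\rho(\CM)\leq n\,\mult_\rho(F)+\cdots$ without justification; members of $|-nK_X+lF|$ are sections of $\cO_\PP(n)$ restricted to $X$ only up to the ambiguity of the ideal $(F,H)$, and controlling their $\rho$-multiplicities requires an argument (this is precisely the kind of delicate point that, per the acknowledgements, led Okada to produce counterexamples to the authors' earlier optimistic expectations). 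Finally, the degree $3$ case is simply not addressed: it lives in a different ambient space with a different singularity analysis, and ``the bookkeeping must be redone'' is not a proof. In short, your outline identifies the right ingredients --- the effective cone of stability conditions from Theorem \ref{T:space-stability}(A) as the tuning parameter, and the rigidity criterion of \cite{Ch-Cubics} as the motivation --- but each of the three steps (reduction to monomial valuations, comparison of $v_\rho(\CM)$ with the stability inequalities, and the degree $3$ analysis) is an open problem rather than a completed argument.
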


\subsection{Birational rigidity of del Pezzo fibrations}

\begin{Def}[{\cite[Definition~1.2]{Corti-Rigidity}}]\label{RigDef}
\noindent A Mori fiber space $\pi\colon X\rightarrow S$ is said to be {\it birationally rigid} if the existence of a birational map $\chi \colon X\dashrightarrow Y$ to a Mori fiber space $\sigma\colon Y\to T$ implies that there exist a birational selfmap $\alpha\colon X\dashrightarrow X$ and a birational map $g\colon S\dashrightarrow T$ such that the following diagram commutes

\vspace{-0.5cm}
\begin{displaymath}
\xymatrix
{ 
	X\ar@{-->}[r]^{\chi\circ\alpha} \ar[d]_{\pi} & \ar[d]^{\sigma} Y \\
	S\ar@{-->}[r]^g & T
}
\end{displaymath}
and that the induced map on the generic fibers $X_\eta$ and $Y_\eta$ is an isomorphism.
\end{Def}

It is well known that a birational map between two Mori fiber spaces can be decomposed into so-called elementary Sarkisov links.
There are four types of elementary links numbered by I, II, III, IV, which are explicitly described in \cite{Corti-Sarkisov}. 
For a del Pezzo fibration, links of type III and IV could be initiated only if the $K$-condition is not satisfied, see \cite{Ahm-LMS} for an analysis in degree $2$.


\begin{Def}
We say that $\pi\colon X \to S$ satisfies the $K$-condition if $-K_X \not\in \overline{\ON{Mob} (X)}^\circ$, where $\overline{\ON{Mob} (X)}^\circ$ is the interior of the cone of mobile divisors.
\end{Def}

One can think of this as the condition which prevents the existence of elementary Sarkisov links of type III and IV. 
Type I or II links are initiated by an extremal extraction (MMP blow up) of a curve or a point in $X$. The expectation is that Type I links do not exist on (semistable) del Pezzo fibrations in degree $1$, $2$ or $3$. Type II links however can exist, and these are typically birational maps where the central fiber is changed. We expect that 
for $1\leq d\leq 3$, there exists a notion of $\fD$-stability for degree $d$ del Pezzo fibrations such that the modified version of Grinenko's conjecture on birational rigidity of del Pezzo fibrations (see \cite{pukh123}, \cite{grinenko-1}, and \cite{grinenko-2}) holds:

\begin{Conj}
Let $1\leq d\leq 3$ and suppose $X \to \MP^1$ is a $\fD$-semistable del Pezzo fibration of degree $d$ satisfying the $K$-condition.
Then $X$ is birationally rigid.
\end{Conj}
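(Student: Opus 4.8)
The plan is to combine the Noether--Fano--Iskovskikh method with the Sarkisov program, using the movable canonical threshold $\mct$ introduced above as the governing numerical invariant. Arguing by contradiction, suppose $\pi\colon X\to\MP^1$ is a $\fD$-semistable degree $d$ del Pezzo fibration satisfying the $K$-condition that is not birationally rigid. Then there is a birational map $\chi\colon X\dashrightarrow Y$ to a Mori fibre space $\sigma\colon Y\to T$ which cannot be made square over the base after composing with birational selfmaps of $X$. Pulling back a very ample mobile system from $Y$ yields a mobile linear system $\CM\subset\big|-nK_X+lF\big|$ on $X$, where $F$ is a fibre class, $n>0$, and $l\in\MZ$; the Noether--Fano inequality then forces the pair $\left(X,\tfrac1n\CM\right)$ to be non-canonical, producing a maximal singularity whose centre is a point $P$ or a curve $C$ on $X$.

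The first step is to discard the links the hypotheses already exclude. By \cite{Corti-Sarkisov} the map $\chi$ decomposes into elementary links of types I--IV, and the $K$-condition $-K_X\notin\overline{\ON{Mob}(X)}^\circ$ is exactly what prevents any link of type III or IV from being initiated (see \cite{Ahm-LMS} for the degree $2$ analysis). Next I would rule out horizontal centres: if $C$ dominates $\MP^1$, restricting $\CM$ to the generic fibre would produce a mobile system realizing a maximal singularity on the del Pezzo surface $X_\eta$ over $\MC(\MP^1)$, contradicting the rigidity of minimal del Pezzo surfaces of degree $\lem 3$ with relative Picard number $1$ (\cite{Manin-RSII}). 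This reduces us to vertical centres contained in a single fibre $X_t$ and to links of types I and II.

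The vertical case is where semistability enters decisively, through $\mct$. For a type II link (a fibrewise transform changing only the central fibre), the uniqueness theorem \cite[Theorem~1.5]{Ch-Cubics} recalled above applies: if one can establish $\mct_X(X_t)\gem\frac12$ for all $t$---which is precisely the content of the preceding conjecture---together with the same bound for the competing semistable target $Y$, then $\mct_X(X_0)+\mct_Y(Y_0)\gem 1$, and upgrading this to a strict inequality forces $\chi$ to be an isomorphism over the base, i.e.\ square. For type I links, initiated by an extremal extraction of a point or a curve in a fibre, I would argue that a $\fD$-semistable model admits no such extractable centre: by Theorem \ref{mainthm1} its fibres are integral with only terminal singularities of bounded Gorenstein index, and the local estimates bounding $\ON{ord}_E(\CM)$ against the discrepancy $a(E;X)$ leave no room for a maximal singularity. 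Combining these exclusions shows that no non-square $\chi$ can exist.

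\textbf{Main obstacle.} The genuine difficulty is twofold. First, the whole argument rests on the bound $\mct_X(X_t)\gem\frac12$, which is itself only conjectured; establishing it amounts to excluding maximal singularities centred at points of $X_t$, and this is the hard analytic core of every del Pezzo fibration rigidity proof, requiring delicate intersection- and multiplicity-theoretic inequalities (of $4n^2$-type) made uniform over all $\fD$-semistable fibre germs. Second, the uniqueness theorem needs the \emph{strict} inequality $\mct_X(X_0)+\mct_Y(Y_0)>1$, whereas the conjectural bound only yields $\gem\tfrac12$ on each side; closing this gap---either by sharpening the threshold on one side, or by treating the boundary case $\mct=\tfrac12$ directly---is the crux, and is exactly where the interplay between Koll\'ar $\fD$-stability and the birational geometry of the fibration would have to be exploited most carefully.
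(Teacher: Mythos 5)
This statement is not a theorem of the paper: it appears in Section \ref{QandC} as an open \emph{conjecture}, and the authors offer no proof of it (nor of the companion conjecture that every semistable model satisfies $\mct_X(X_t)\gem\frac{1}{2}$). So there is no proof in the paper to compare your argument against, and your proposal should be judged on its own terms --- where it is a reasonable roadmap in the spirit the authors themselves sketch (Sarkisov decomposition, the $K$-condition excluding links of types III and IV, and \cite[Theorem~1.5]{Ch-Cubics} handling fibrewise modifications), but it is not a proof.

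The gaps are genuine and you have correctly located two of them yourself; let me make all of them explicit. First, the entire vertical-centre analysis is conditional on $\mct_X(X_t)\gem\frac{1}{2}$, which is itself only the preceding conjecture of the paper; nothing in the stability machinery developed in Sections \ref{numeric}--\ref{Sing-and-stab} currently yields such a bound, and proving it is exactly the hard exclusion of maximal singularities at points of the fibres. Second, even granting that bound for both $X$ and a competing semistable model $Y$, you only get $\mct_X(X_0)+\mct_Y(Y_0)\gem 1$, while the uniqueness theorem requires the \emph{strict} inequality; the boundary case is not addressed. Third, your exclusion of Type I links is asserted, not argued: the paper itself only records this as an ``expectation,'' and Theorem \ref{mainthm1} (integral fibres, terminal total space) does not by itself rule out an extremal extraction from a point or curve in a fibre initiating such a link --- one would need the same kind of multiplicity-versus-discrepancy estimates that underlie the $\mct$ bound. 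Finally, the target $Y$ of a fibrewise link need not itself be $\fD$-semistable, so even the conditional application of the uniqueness theorem requires an argument that one may replace $Y$ by a semistable model without losing control of $\chi$. In short: the strategy is the expected one, but every load-bearing step is either conjectural or missing, so this cannot be regarded as a proof of the statement.
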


\bibliographystyle{amsplain}
\bibliography{AFK-bib-2019}{}

\end{document}